\date{\today}
\def\la{\langle}
\def\ra{\rangle}
\def\w{\wedge}
\def\dbar{\bar\partial}
\def\R{{\mathbf R}}
\def\C{{\mathbf C}}
\def\w{{\wedge}}
\def\P{{\mathbf P}}
\def\codim{{\rm codim\,}}
\def\U{{\mathcal U}}
\def\be{\begin{equation}}
\def\ee{\end{equation}}
\def\Ok{\mathcal O}
\def\1{{\bf 1}}
\DeclareMathOperator{\Id}{Id}
\DeclareMathOperator{\supp}{supp}
\newcommand{\eps}{\varepsilon}
\newcommand{\FS}{{\textrm{FS}}}
\newcommand{\minus}{\setminus}
\DeclareMathSymbol{\lsb@l}{\mathalpha}{letters}{`l}
\newtheorem{thm}{Theorem}[section]
\newtheorem{lma}[thm]{Lemma}
\newtheorem{prop}[thm]{Proposition}
\theoremstyle{definition}
\newtheorem{df}[thm]{Definition}
\theoremstyle{remark}
\newtheorem{preremark}[thm]{Remark}
\newtheorem{preex}[thm]{Example}
\newenvironment{remark}{\begin{preremark}}{\qed\end{preremark}}
\newenvironment{ex}{\begin{preex}}{\qed\end{preex}}
\numberwithin{equation}{section}
\title[Chern forms of hermitian metrics with analytic singularities]
{Chern forms of hermitian metrics with analytic singularities on
  vector bundles}
\begin{document}
\date{\today}

\author[Richard L\"ark\"ang \& Hossein Raufi \& Martin Sera \& Elizabeth  Wulcan]
{Richard L\"ark\"ang \& Hossein Raufi \& Martin Sera \& Elizabeth  Wulcan}

\address{R.\ L\"ark\"ang,  H.\ Raufi, E.\ Wulcan, Department of Mathematics\\Chalmers University of Technology and the University of Gothenburg\\S-412 96 
Gothenburg\\SWEDEN}
\email{larkang@chalmers.se, raufi@chalmers.se, wulcan@chalmers.se}
\address{M.\  Sera, Faculty of Engineering\\Kyoto University of Advanced Science\\Kyoto 615-8577\\JAPAN}
\email{sera.martin@kuas.ac.jp}

\subjclass[2010]{32L05, 32U40, 32W20 (14C17, 32U05)}

\thanks{The first and the last author were partially supported by the
  Swedish Research Council. The third author was supported by the
  German Research Foundation (DFG, grant SE 2677/1) and the Knut and Alice Wallenberg Foundation.}

\begin{abstract}
We define Chern and Segre forms, or rather currents, associated with a Griffiths positive singular hermitian
metric $h$ with analytic singularities on a holomorphic vector bundle $E$. 
The currents are constructed as pushforwards of generalized Monge-Amp\`ere products on the
projectivization of $E$. The Chern and Segre currents represent the
Chern and Segre classes of $E$, respectively, and coincide with
the Chern and Segre forms of $E$ and $h$ where $h$ is smooth. 
Moreover, our currents coincide with the Chern and Segre forms constructed by the
first three authors and Ruppenthal in the cases when these are defined. 
\end{abstract}

\maketitle

\section{Introduction}\label{intro}
Singular metrics on line bundles were introduced by Demailly in \cite{Dem2},
and have since developed to be an influential analytic tool in complex algebraic geometry.
In \cite{BP} Berndtsson and P\u aun introduced singular
metrics on vector-bundles in order to prove results about
pseudoeffectivity of relative canonical bundles.
These have been further studied in a series of papers including, e.g.,
\cite{Hos,HPS,R}. In order to develop a theory for
singular metrics on vector bundles it seems crucial to have Chern
forms. 
In the line bundle case the (first) Chern form is a well-defined
current, whereas any attempt to construct Chern forms of singular
metrics on higher rank
bundles seems to involve multiplication of currents.
In \cite{LRRS} the first three authors together with Ruppenthal defined Chern
forms for positive singular metrics on vector bundles 
under a certain natural condition on the dimension of the degeneracy
locus. 
In this paper we define Chern forms without this assumption but for
metrics with so-called analytic singularities. 
To do this we develop a new 
formalism for generalized (mixed) Monge-Amp\`ere operators for
plurisubharmonic functions with analytic singularities
extending the construction in \cite{AW}. 

Let $E$ be a holomorphic vector bundle of rank $r$ over a
complex manifold $X$ of dimension $n$ 
and let 
 $h$ be a smooth hermitian metric on $E$. 
Let $\pi : \P(E)\to X$ be the
projective bundle of lines in $E^*$. Then $h^*$ induces a metric on the
tautological line bundle $\Ok_{\P(E)}(-1)\subset \pi^* E^*$;  let
$e^{-\varphi}$ be the dual metric on $\Ok_{\P(E)}(1)$. 
If $h$ is Griffiths positive, then
$e^{-\varphi}$ is a positive metric, i.e., the local weights $\varphi$
are plurisubharmonic (psh), and the first Chern form 
of $e^{-\varphi}$ is given as $dd^c
\varphi$, where $d^c=(1/4\pi i)(\partial -\dbar)$. 
Note that this is a well-defined global positive $(1,1)$-form. 
Following the ideas in, e.g., \cite{F} one can define the associated
\emph{$k$th Segre form} as
\begin{equation}\label{ny}
s_k(E,h):=(-1)^k\pi_*(dd^c\varphi)^{k+r-1},
\end{equation}
cf.\ \cite[Section~7.1]{M}. Since $\pi$ is a
submersion this is a smooth form of bidegree
$(k,k)$. 
It was proved in \cite[Proposition~6]{M}, see also
\cite[Proposition~1.1]{Div} and \cite[Proposition~3.1]{G}, that \eqref{ny} coincides with the classical
definition of Segre forms, which means that the total Segre form
$s(E,h)=1+s_1(E,h)+s_2(E,h)+\cdots$ is the multiplicative
inverse of the total
Chern form $c(E,h)=1+c_1(E,h)+c_2(E,h)+\cdots$. 
Identifying components of the same bidegree, this can be expressed as 
\begin{equation}\label{identitet}
s_k(E,h) + s_{k-1}(E,h) \wedge c_1(E,h)+\cdots + c_k(E,h)=0, ~~~~~
k=1,2, \ldots ~~ .
\end{equation} 
In particular, \eqref{identitet} holds on cohomology level, i.e., the
total Segre class $s(E)=1+s_1(E)+s_2(E) + \cdots $ is the multiplicative
inverse of the total
Chern class $c(E)=1+c_1(E)+c_2(E)+\cdots $; here $c_k(E)$ and
$s_k(E)$ are the \emph{$k$th Chern} and \emph{Segre classes} of $E$, defined as the de Rham cohomology classes of $c_k(E,h)$
and $s_k(E,h)$, respectively. 

The aim of this paper is to construct Chern and Segre forms, or rather currents, associated
with singular metrics. 
Therefore let $h$ be a Griffiths positive singular metric on $E$ in the sense of
Berndtsson-P\u aun, \cite{BP}, see Section \ref{jensen}; then the induced singular
metric $e^{-\varphi}$ on $\Ok_{\P(E)}$ is positive, cf.\ Proposition ~\ref{storemosse}. 
Our strategy is to mimic the construction \eqref{ny} of Segre forms and use
them to construct Chern and Segre currents. However, in general one cannot take
products of currents and in particular $(dd^c\varphi)^k$ is not always 
well-defined. 

Recall that a psh function $u$ has \emph{analytic singularities} if it is
locally of the form 
\begin{equation}\label{trond}
u=c\log|F|^2+v,
\end{equation}
where $c>0$, $F$ is a tuple of holomorphic functions $f_j$, $|F|^2=\sum|f_j|^2$, and $v$ is 
bounded. 
We say that $h$ has \emph{analytic singularities} if the weights
$\varphi$ are psh with analytic singularities; for a direct definition
in terms of $h$, see Proposition ~\ref{gummistovlar}. 
In \cite{Hos} Hosono constructed a class of examples of singular hermitian
metrics on vector bundles, that in fact have analytic singularities, see Example
~\ref{hosono}.

Given a psh function $u$ with analytic singularities, in \cite{AW} Andersson and the last author defined \emph{generalized
  Monge-Amp\`ere products} $(dd^c u)^m$ recursively as 
\begin{equation}\label{defen}
(dd^c u)^k := dd^c (u \1_{X\setminus Z} (dd^c u)^{k-1}),
\end{equation}
where $Z$ is the unbounded locus of $u$, i.e., locally defined as 
$\{F=0\}$ where $u$ is given by \eqref{trond}; for $u$
of the form $u=\log |F|^2$ the currents \eqref{defen} were defined in
\cite{A}. 
The current 
$(dd^c u)^m$ is positive and closed and of bidegree $(m,m)$. For $m\leq \codim Z$,
it coincides with Bedford-Taylor-Demailly's classically 
defined $(dd^c u)^m$, cf.\ Section ~\ref{classical}. 
If $\alpha$ is a closed smooth $(1,1)$-form, inspired by
\cite[Theorem~1.2]{ABW}, cf.\ Remark ~\ref{massformeln}, we let 
\begin{equation}\label{geten} 
[dd^c u]^m_\alpha:=(dd^c u)^m+\sum_{\ell=0}^{m-1} \alpha^{m-\ell}\1_Z
(dd^c u)^\ell
\end{equation}
for $m\geq 1$ and $[dd^c u]_\alpha^0=1$; 
see ~\eqref{bullen} for a recursive description. 
Note that if $m\leq \codim Z$, then $\1_Z
(dd^c u)^\ell=0$ for $\ell<m$ and thus $[dd^c u]^m_\alpha=(dd^c
u)^m$.

Now assume that $h$ has analytic singularities
and let $\theta$ be the first Chern form of a smooth
metric $e^{-\psi}$ on $\Ok_{\P(E)}(1)$; e.g., $e^{-\psi}$ can be chosen as
the metric on $\Ok_{\P(E)}(1)$ induced by a smooth metric on $E$. 
Since the difference of two local weights $\varphi$ is of the form $\log|f|^2$, where $f$ is a nonvanishing
holomorphic function,  $[dd^c\varphi]_\theta^m$ is a globally defined current
on $\P(E)$, see Section ~\ref{linjen}. 
Inspired by \eqref{ny} we define
\begin{equation}\label{dansk}
s_k(E,h,\theta):=(-1)^k\pi_*[dd^c\varphi]_\theta^{k+r-1}.
\end{equation}
If the $\varphi$ are smooth, then clearly $s_k(E,h,\theta)$ coincides with
$s_k(E,h)$ defined by \eqref{ny}.

To construct Chern currents we need to define products of this kind of
currents. Let $E_1,\ldots, E_t$ be disjoint copies of $E$ and let $\pi:Y\to X$ be the
fiber product $Y=\P(E_t) \times_X \cdots \times_X \P(E_1).$
Let $\varphi_j$ and $\theta_j$ denote the pullbacks to $Y$ of the
metric and the form on $\P(E_j)$ corresponding to $\varphi$ and
$\theta$, respectively. 
By extending ideas in \cite{AW} and \cite{ASWY} we give meaning to products 
\[
[dd^c \varphi_t]_{\theta_t}^{m_t}\wedge\cdots\wedge [dd^c \varphi_1]_{\theta_1}^{m_1}
\]
on $Y$, see Sections ~\ref{general} and ~\ref{linjen}. 
Next for $k_j\geq 1$ we define
\begin{equation}\label{korsbar}
s_{k_t}(E,h,\theta)\wedge\cdots\wedge s_{k_1}(E,h,\theta)
:=
(-1)^k\pi_* \big ( [dd^c
\varphi_t]_{\theta_t}^{k_t+r-1}\wedge\cdots\wedge [dd^c
\varphi_1]_{\theta_1}^{k_1+r-1}
\big ), 
\end{equation}
see Section ~\ref{plommon}; 
here and throughout
$k:= k_1+\cdots + k_t$. 
If $h$, and thus $\varphi$, is smooth, then \eqref{korsbar} just
coincides with the product $s_{k_t}(E,h)\wedge\cdots\wedge
s_{k_1}(E,h)$ of smooth Segre forms, cf.\ Section ~\ref{sakura}. 
The currents \eqref{korsbar} are in general not commutative in the
factors $s_{k_j}(E,h,\theta)$, see Example ~\ref{martinex}. 
Now we can recursively define Chern currents $c_k(E,h,\theta)$ using the identities
\eqref{identitet}, ~i.e., 
\begin{eqnarray}
c_1(E,h, \theta)&:=&-s_1(E,h,\theta),\nonumber\\
c_2(E,h,\theta)&:=&s_1(E,h,\theta)^2-s_2(E,h,\theta),\nonumber\\
& \vdots &\nonumber \\
c_k(E,h,\theta) &:=& \sum_{k_1+\cdots +k_t=k}(-1)^ts_{k_t}(E,h,\theta)\wedge\cdots\wedge
s_{k_1}(E,h,\theta).\label{koko}
\end{eqnarray}

\begin{thm}\label{thmA}
Let $h$ be a Griffiths positive hermitian metric with analytic singularities on
the holomorphic vector bundle $E\to X$ over a complex manifold $X$ of
dimension $n$ and let $\theta$ be the first Chern form of a smooth metric
on $\Ok_{\P(E)}(1)$. 
Then for $k=1,2, \ldots$ $c_k(E,h,\theta)$ and $s_k(E,h,\theta)$ defined by \eqref{dansk}
and \eqref{koko}, respectively, are closed normal 
 $(k,k)$-currents; more precisely they are locally differences of
closed positive currents.
Moreover
\begin{enumerate}
\item\label{forsta}
$c_k(E,h,\theta)$ and $s_k(E,h,\theta)$ represent the $k$th Chern and
Segre classes $c_k(E)$ and $s_k(E)$ of $E$, respectively, as de Rham cohomology classes of currents,
\item\label{andra}
$c_k(E,h,\theta)$ and $s_k(E,h,\theta)$ coincide with the Chern and Segre forms $c_k(E,h)$ and
$s_k(E,h)$, respectively, where $h$ is smooth, 
\item\label{tredje}
the Lelong numbers of $c_k(E,h,\theta)$ and $s_k(E,h,\theta)$ at each $x \in X$ are independent of
$\theta$. 
\end{enumerate}
\end{thm}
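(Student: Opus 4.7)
The plan is to separate the theorem into structural properties, the smooth recovery, the cohomology class computation, and the Lelong number invariance, building throughout on the fact from \cite{AW} that the generalized Monge-Amp\`ere products $(dd^c\varphi)^\ell$ and their restrictions $\1_Z(dd^c\varphi)^\ell$ are closed positive currents. First I would settle the structural claims and part \eqref{andra}. Writing $\theta$ locally as a difference of two smooth non-negative closed $(1,1)$-forms makes every summand in \eqref{geten} a local difference of closed positive currents, and hence so is $[dd^c\varphi]^m_\theta$ on $\P(E)$. The proper submersion $\pi$ preserves closedness, positivity, and order zero under pushforward, so $s_k(E,h,\theta)$ from \eqref{dansk} is a closed normal $(k,k)$-current with the claimed local decomposition; the same reasoning on the fibre product $Y$ handles the products in \eqref{korsbar}, and linearity transfers the structure to $c_k(E,h,\theta)$ via \eqref{koko}. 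For part \eqref{andra}, smoothness of $h$ forces $Z=\emptyset$, so \eqref{geten} and \eqref{korsbar} collapse to classical wedge products of smooth forms, and \eqref{koko} recovers the classical Chern forms.

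For part \eqref{forsta}, the goal is to identify the cohomology class of $[dd^c\varphi]^m_\theta$ with $c_1(\Ok_{\P(E)}(1))^m$. Choosing $\psi$ with $dd^c\psi=\theta$ locally, the difference $\tau:=\varphi-\psi$ is a globally defined function on $\P(E)$ with analytic singularities, and a standard approximation of the form $\tau_\epsilon=c\log(|F|^2+\epsilon^2)+v_\epsilon$ yields smooth weights $\varphi_\epsilon:=\psi+\tau_\epsilon$ such that $(dd^c\varphi_\epsilon)^m$ represents $c_1(\Ok_{\P(E)}(1))^m$ for every $\epsilon$. The role of the mass formula of \cite[Theorem~1.2]{ABW} (cf.\ Remark \ref{massformeln}) is precisely that $(dd^c\varphi_\epsilon)^m\to[dd^c\varphi]^m_\theta$ weakly, with the correction terms $\theta^{m-\ell}\1_Z(dd^c\varphi)^\ell$ in \eqref{geten} accounting exactly for the limit mass on $Z$; since cohomology pairings extend continuously under weak limits of closed currents, $[dd^c\varphi]^m_\theta$ represents $c_1(\Ok_{\P(E)}(1))^m$. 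Pushing down by $\pi$ and using $\pi_*c_1(\Ok_{\P(E)}(1))^{k+r-1}=(-1)^k s_k(E)$ gives the Segre statement; the same regularization carried out on $Y$, together with the product structure of $H^*(Y)$ over $H^*(X)$, identifies \eqref{korsbar} with a representative of $(-1)^k s_{k_t}(E)\cdots s_{k_1}(E)$, and \eqref{koko} is then the formal cohomological inversion $c(E)=s(E)^{-1}$ yielding the Chern statement.

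For part \eqref{tredje}, two choices $\theta,\theta'$ differ by $dd^c g$ for a globally defined smooth function $g$ on $\P(E)$. Expanding $\theta'^{a}-\theta^{a}$ in powers of $dd^c g$ and using closedness of the currents $\1_Z(dd^c\varphi)^\ell$, one rewrites $[dd^c\varphi]^m_{\theta'}-[dd^c\varphi]^m_\theta$ as $dd^c$ of a current of the form $g\cdot R$ with $R$ closed. Pushing forward and exploiting the boundedness of $g$ should yield that this $dd^c$-exact correction contributes no Lelong number at any $x\in X$; this step requires a careful local analysis because $R$ is itself singular along $Z$, and one likely invokes a slicing of $R$ over $\pi^{-1}(x)$. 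The analogous analysis on $Y$ extends the invariance to \eqref{korsbar} and through \eqref{koko} to $c_k(E,h,\theta)$. \emph{The central obstacle} of the proof is the cohomological identification in part \eqref{forsta}: one must verify that the recursive \eqref{defen}--\eqref{geten}, together with its propagation through the products \eqref{korsbar} and the recursion \eqref{koko}, matches the mass formula of \cite{ABW} precisely---a nontrivial bookkeeping task because the correction mass at $Z$ depends on $\theta$ and must propagate correctly through the nested pushforwards on the fibre products.
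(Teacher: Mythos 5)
Your overall plan (structural claims, smooth recovery, cohomology, Lelong invariance) parallels the paper, and the structural reduction to differences of positive currents via a local decomposition of $\theta$ into positive pieces is essentially how the paper argues in Lemma \ref{institution}, although the paper has to do a genuine induction on the recursively defined products \eqref{korsbar} rather than just reading the positivity off \eqref{geten} summand-by-summand. Part \eqref{andra} is also in the right spirit, though you should note the paper still needs Lemma \ref{forwards} to identify $\pi_*$ of a product of pullbacks on the fiber product $Y$ with the product of the individual pushforwards.

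The serious gaps are in parts \eqref{forsta} and \eqref{tredje}. For part \eqref{forsta}, the pivot of your argument is the assertion that $(dd^c\varphi_\epsilon)^m\to[dd^c\varphi]^m_\theta$ \emph{weakly}. This is false in general: as the paper points out in Example \ref{kass} (citing \cite[Example~3.2]{ABW}), for $m$ exceeding the codimension of $Z$ different decreasing sequences $\varphi_\epsilon\searrow\varphi$ can give different limits, and there is no reason a given sequence should land on $[dd^c\varphi]^m_\theta$; the mass formula in Remark \ref{massformeln} is a statement about total masses on a compact K\"ahler manifold, not a weak convergence statement. Moreover, your candidate global regularization $\psi+c\log(|F|^2+\epsilon^2)+v_\epsilon$ is only local (the tuples $F$ do not patch), and $\P(E)$ is typically noncompact so Demailly-type regularizations are not automatically available. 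The paper sidesteps all of this with Proposition \ref{dova}, which exhibits a $dd^c$-exact primitive directly:
$[dd^c\varphi_t]^{m_t}_{\theta_t}\wedge\cdots\wedge[dd^c\varphi_1]^{m_1}_{\theta_1}=\theta_t^{m_t}\wedge\cdots\wedge\theta_1^{m_1}+dd^c S$,
obtained by the telescoping identity \eqref{snut}. This is stronger, elementary, and does not require any limit; it is the lemma you should be proving rather than invoking weak convergence.

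For part \eqref{tredje}, your formulation ``$[dd^c\varphi]^m_{\theta'}-[dd^c\varphi]^m_\theta=dd^c(g\cdot R)$ with $R$ closed'' is not what comes out of the computation, and boundedness of $g$ alone would not force the Lelong number contribution to vanish. What actually holds (and what the paper proves inductively) is that the difference of the currents $\mu,\mu'$ on $Y$ is a finite sum $\sum d\beta_i\wedge\mu_i$ with $\beta_i$ smooth and $\mu_i$ closed positive with analytic singularities; the crucial point is then not boundedness but \emph{support}: after applying the Monge--Amp\`ere operator from \eqref{kalsongdef} (the $S$ of Remark \ref{lelongkalsong}), the primitive $\tau_i=\beta_i\wedge S\wedge\mu_i$ is supported on $\pi^{-1}\{x\}$, and Stokes kills $\int d\pi_*\tau_i$ because $\pi_*\tau_i$ is supported at the single point $x$. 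Your ``slicing of $R$ over $\pi^{-1}(x)$'' does not substitute for this support argument and would need to be made precise before it could carry the proof.
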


Note that Lelong numbers of $c_k(E,h,\theta)$ and $s_k(E,h,\theta)$
are well-defined since 
the currents are locally differences of closed positive currents,
cf.\ Section \ref{lelongsection}.

Assume that the unbounded locus of $\log
\det h^*$ is contained in a variety $V$ of pure codimension $p$. 
Then for $k\leq p$, $c_k(E,h,\theta)$ and
$s_k(E,h,\theta)$ are independent of $\theta$, see Section ~\ref{losi}. In general, however, $c_k(E,h,\theta)$ and
$s_k(E,h,\theta)$ do depend on $\theta$, cf.\ Examples ~\ref{kass}, ~\ref{linjalen}
and \ref{martinex}. 

In \cite{LRRS} the first three authors together with Ruppenthal showed
that if $h$ is a singular hermitian metric (not necessarily with
analytic singularities) such that the unbounded locus of $\log
\det h^*$ is contained in a variety $V$ of pure codimension $p$, then 
for $k_1+\cdots+k_t\leq p$ one can give meaning to currents $s_{k_t}(E,h)\wedge\cdots\wedge
s_{k_1}(E,h)$ as limits of $s_{k_t}(E,h_{\varepsilon_t})\wedge\cdots\wedge
s_{k_1}(E,h_{\varepsilon_1})$, where $h_{\varepsilon_j}$ are smooth metrics
approximating $h$, see Section ~\ref{jamfora}. Analogously to \eqref{koko} one can
then define Chern
currents $c_k(E,h)$ for $k\leq p$. We should remark that this
construction cannot be extended to general $k$, see
Example \ref{kass}.

\begin{thm}\label{thmB}
Let $h$ be a Griffiths positive hermitian metric with analytic singularities on
the holomorphic vector bundle $E\to X$ over a complex manifold $X$ 
and let $\theta$ be the first Chern form of a smooth
metric on $\Ok_{\P(E)}(1)$. 
Assume that the unbounded locus of $\log \det h^*$ is contained in a variety $V\subset X$. 
Then for $k_1+\cdots + k_t\leq \codim V$, 
\begin{equation}\label{jason} 
s_{k_t}(E,h,\theta)\wedge\cdots\wedge s_{k_1}(E,h,\theta)
=
s_{k_t}(E,h)\wedge\cdots\wedge s_{k_1}(E,h).
\end{equation}
In particular $c_k(E,h,\theta)=c_k(E,h)$ 
for $k\leq \codim V$. 
\end{thm}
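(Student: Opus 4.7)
The strategy is, after expanding each factor $[dd^c\varphi_j]_{\theta_j}^{m_j}$ via \eqref{geten} and pushing forward from $Y$, to discard all correction terms and identify the surviving ``classical'' piece with the \cite{LRRS} limit of smooth Segre forms. Set $m_j := k_j + r - 1$. By \eqref{korsbar} the left-hand side of \eqref{jason} equals $(-1)^k\pi_*$ applied to the mixed product
\[
[dd^c\varphi_t]_{\theta_t}^{m_t}\wedge\cdots\wedge [dd^c\varphi_1]_{\theta_1}^{m_1}
\]
on $Y$. Expanding each factor by \eqref{geten} splits this into a principal term $(dd^c\varphi_t)^{m_t}\wedge\cdots\wedge (dd^c\varphi_1)^{m_1}$---the generalized AW mixed product of Sections~\ref{general} and~\ref{linjen}---plus correction terms, each containing at least one factor of the form $\theta_j^{m_j-\ell_j}\wedge \1_{Z_j}(dd^c\varphi_j)^{\ell_j}$ with $\ell_j < m_j$, where $Z_j \subset Y$ is the unbounded locus of $\varphi_j$.

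\emph{Vanishing after pushforward.} Griffiths positivity forces the local weights to be upper semicontinuous, and the hypothesis that $\log\det h^*$ is bounded outside $V$ means $h^*$ is non-degenerate outside $V$; together these imply that the unbounded locus of the induced weight on $\Ok_{\P(E_j)}(1)$ sits in the pullback of $V$, so $Z_j\subset \pi^{-1}(V)$ and has codimension at least $p := \codim V$ in $Y$. A correction term is a positive closed current on $Y$ supported in some $Z_j$, hence in $\pi^{-1}(V)$; its pushforward to $X$ is a positive closed current of bidegree $(k,k)$ supported on $V$. When $k < p$ the support theorem for positive closed currents forces immediate vanishing. In the boundary case $k = p$ one has to use the additional fact that the truncated factor $\1_{Z_j}(dd^c\varphi_j)^{\ell_j}$ has bidegree $\ell_j < m_j$, which combined with fiber integration along $\pi$ yields a strict drop in effective bidegree along $V$ and again triggers the support theorem.

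\emph{Identification with the smooth limit.} In the range $k \le p$, the generalized AW mixed product coincides, by Section~\ref{classical}, with the classical Bedford--Taylor--Demailly product, which is continuous under decreasing smooth approximations $\varphi_{j,\eps} \searrow \varphi_j$ arising from a smooth decreasing approximation $h_\eps \searrow h$. Since $\pi \colon Y \to X$ is proper, pushforward commutes with this limit, giving
\[
(-1)^k\pi_*\bigl((dd^c\varphi_t)^{m_t}\wedge\cdots\wedge(dd^c\varphi_1)^{m_1}\bigr) = \lim_{\eps\to 0}(-1)^k\pi_*\bigl(dd^c\varphi_{t,\eps}\wedge\cdots\wedge dd^c\varphi_{1,\eps}\bigr),
\]
which is $s_{k_t}(E,h)\wedge\cdots\wedge s_{k_1}(E,h)$ by the \cite{LRRS} definition. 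This proves \eqref{jason}, and $c_k(E,h,\theta) = c_k(E,h)$ for $k \le \codim V$ follows by applying the recursion \eqref{koko} on both sides.

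\emph{Main obstacle.} The crux is the boundary case $k = \codim V$ of the vanishing argument: a naive support argument on $V$ is insufficient there, and one must combine the $\1_{Z_j}$ factor with integration along the fibers of $\pi$ to extract the needed strict drop in effective bidegree. Simultaneous truncations on several factors produce correction terms supported on intersections of the $Z_j$ above the common base $V$, and verifying that every such mixed correction term pushes forward to zero is where the mixed formalism developed in Sections~\ref{general} and~\ref{plommon} becomes essential.
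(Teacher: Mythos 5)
Your strategy does not work, and the paper itself contains an explicit counterexample showing why. The central claim---that the correction terms in the expansion \eqref{geten} vanish after pushforward when $k \le \codim V$---is false in the boundary case $k = \codim V$. Take Example~\ref{lukt}: $E$ the trivial rank-$2$ bundle over $X=\C^2$, $h = e^{-\log|x|^2}\Id$, so $V = \{0\}$ and $p = \codim V = 2$. There $(dd^c\varphi)^3 = 2\omega_\FS\wedge[x=0]$, so $\pi_*(dd^c\varphi)^3 = 2[0]$, while the correction term $\theta\wedge\1_Z(dd^c\varphi)^2 = \theta\wedge[x=0]$ pushes forward to $[0]$. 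Hence $s_2(E,h,\theta) = 3[0]$, and since $k = 2 = \codim V$ Theorem~\ref{thmB} applies, giving $s_2(E,h) = 3[0]$ as well---not $2[0]$, which is what your principal term alone produces. So the correction term survives and is exactly what makes Theorem~\ref{thmB} true; discarding it gives the wrong answer. Your proposed remedy, that the factor $\1_{Z_j}(dd^c\varphi_j)^{\ell_j}$ with $\ell_j < m_j$ yields a ``strict drop in effective bidegree along $V$,'' is not a valid argument: $\theta_j^{m_j-\ell_j}\wedge\1_{Z_j}(dd^c\varphi_j)^{\ell_j}$ still has bidegree $(m_j,m_j)$ on $Y$, its pushforward is a closed positive $(k,k)$-current supported on $V$ with $\codim V = k$, and such a current can well be a nonzero multiple of $[V]$ (as it is here). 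There is also a secondary gap: you assert the generalized AW product $(dd^c\varphi_t)^{m_t}\wedge\cdots\wedge(dd^c\varphi_1)^{m_1}$ coincides with the classical Bedford--Taylor--Demailly product ``in the range $k\le p$,'' but the BTD codimension condition requires $\codim Z_j \ge m_j = k_j + r - 1$, while the hypothesis only gives $\codim Z_j \ge p$; again in Example~\ref{lukt} one has $\codim Z = 2 < 3 = m$, so the BTD product is not even defined, and the continuity under decreasing approximation that you invoke from Proposition~\ref{addis} is not available.

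The paper's actual argument avoids any decomposition into principal and correction parts. It proceeds by induction on $t$ and compares $s_{k_t}(E,h,\theta)\wedge\cdots\wedge s_{k_1}(E,h,\theta)$ with the LRRS current globally: the two agree outside $V$ by Remarks~\ref{bibliotek} and~\ref{konstmuseum}, and for each $x\in V$ one tests against a bump form $\beta$ of bidegree $(n-k,n-k)$ constructed in Lemma~\ref{bump} so that $dd^c\beta$ is supported away from $V$; Lemma~\ref{medusa} then shows both currents give the same integral against $\beta$, since the correction-type terms appearing there are killed by Stokes' theorem against $dd^c\beta$ rather than by a support argument. Lemma~\ref{flopp} converts equality-of-integrals-against-bump-forms into equality of the currents. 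That mechanism (Stokes against carefully chosen test forms rather than dimension-theoretic vanishing) is precisely what handles the borderline $k = \codim V$ case that your approach cannot.
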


The paper is organized as follows. In Section ~\ref{mma} we give some
background on currents and classical Monge-Amp\`ere
products. In Section ~\ref{general} we introduce mixed Monge-Amp\`ere
products of psh functions with analytic singularities generalizing
\eqref{defen}. Next in Sections ~\ref{linjen} and ~\ref{jensen} we study
metrics with analytic singularities on line bundles and vector
bundles, respectively. The proofs of Theorems ~\ref{thmA} and ~\ref{thmB}
occupy Sections ~\ref{konstruera} and ~\ref{jamfora},
respectively. Finally, in Section \ref{losi} we conclude with some
examples and remarks.

\section{Preliminaries}\label{mma}
Let us first recall some results on (closed positive) currents. 
If $\pi:\widetilde X\to X$ is a proper map, $\mu$ is a
current on $\widetilde X$, and $\alpha$ is a smooth form on $X$, then
we have the projection formula 
\begin{equation}\label{daghem}
\alpha\wedge \pi_*\mu =\pi_*(\pi^*\alpha\wedge \mu).
\end{equation}
Moreover, if $p$ is a proper submersion, $\mu$ is a current on $X$, 
and $\alpha$ is a smooth form on $\widetilde X$, then
\begin{equation}\label{daghem2}
p_* \alpha \wedge \mu =p_*(\alpha\wedge p^*\mu).
\end{equation}

The \emph{Poincar\'e-Lelong formula} asserts that if $f$ is a
holomorphic function defining a divisor $D$, then 
\begin{equation*}
dd^c \log |f|^2=[D],
\end{equation*}
where $[D]$ is the current of integration along the divisor of $f$.

Given a subset $A\subset X$, let $\1_A$ denote the characteristic
function of $A$. 
If $Z\subset X$ is a subvariety and $T$ is a closed positive current
on $X$,
then the Skoda-El Mir theorem asserts that $\1_{X \setminus Z} T$ 
is again positive and closed. 
It follows that if $U\subset X$ is any constructible set,\!\!%
\footnote{{Simple examples of constructible sets are $V\minus W$ or $(X\minus V)\cup W$ for analytic sets $V$ and $W$ in $X$.}}
i.e., a set in the Boolean algebra generated by Zariski open sets in $X$,
then also $\1_UT$ is positive and closed.
Note that if $U_1$ and $U_2$ are two constructible sets in $X$, then
\begin{equation}\label{frisk}
    \1_{U_1 \cap U_2} T = \1_{U_1} \1_{U_2} T.
\end{equation}
Also, note that if $\chi_\epsilon$ is any sequence of bounded functions such
that $\chi_\varepsilon \to \1_U$ pointwise, then
by dominated convergence, $\1_U T = \lim_\varepsilon \chi_\epsilon
T$. 
It follows that if $\pi$ is as above, then 
\begin{equation}\label{bla}
\1_U\pi_*T = \pi_*(\1_{\pi^{-1} U}T).
\end{equation}
Moreover, if $Z\subset X$ is a subvariety (locally) defined by a holomorphic
tuple $F$, then $\1_{X\setminus Z} T$ equals
the limit of $|F|^{2\lambda} T$ as $\lambda\rightarrow 0^+$.

Finally recall that a closed positive (or normal) current of bidegree $(k,k)$ on $X$ that has
support on a subvariety of $X$ of codimension $>k$ vanishes. We refer
to this as the \emph{dimension principle}. In particular, if $W, Z \subset
X$ are subvarieties such that $W$ is of pure codimension $p$ and
$\codim (Z\cap W)>p$, then 
\begin{equation}\label{blabla}
\1_Z[W]=0.
\end{equation}

\subsection{Classical Bedford-Taylor-Demailly Monge-Amp\`ere products}\label{classical} 

Let $u_1,\ldots, u_m$ be locally bounded psh functions on a complex manifold
$X$ and let $T$ be a closed positive current on $X$. 
The classical Bedford-Taylor theory asserts that one can define a
closed positive current 
\begin{equation*}
dd^c u_m\wedge\cdots\wedge dd^c u_1\wedge T
\end{equation*}
recursively as 
\begin{equation}\label{slott}
dd^c u_{k}\wedge\cdots\wedge dd^c u_1\wedge T:=
dd^c\big (u_{k} dd^c u_{k-1}\wedge\cdots\wedge dd^c u_1\wedge T\big ), 
\end{equation}
for $k=1,\ldots, m$. 
This current satisfies the following version of the
Chern-Levine-Nirenberg inequalities, see \cite[Chapter~III,
Propositions~3.11 and~4.6]{Dem}. 

\begin{prop}\label{cln}
Given compacts $L \subset\subset K$ there is a constant $C_{K,L}$
such that for all closed positive currents $T$ and psh functions
$v,u_1,\ldots, u_m$, where $u_1,\ldots, u_m$ are locally
bounded,
\[
||v dd^c u_m\wedge\cdots\wedge dd^c u_1\wedge T||_L\leq C_{K,L} 
||v||_{L^1(K)} ||u_1||_{L^\infty(K)} \cdots ||u_m||_{L^\infty(K)}
||T||_K.
\]
\end{prop}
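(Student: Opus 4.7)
The result is the classical Chern–Levine–Nirenberg inequality in the form of Demailly, and the plan is to prove it by induction on $m$, first establishing an auxiliary $L^\infty$ version and then upgrading the leading factor $v$ from $L^\infty$ to $L^1$.

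\smallskip

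For the auxiliary $L^\infty$ bound
\[
\|dd^c u_m\wedge\cdots\wedge dd^c u_1\wedge T\|_L \le C_{K,L}\,\|u_1\|_{L^\infty(K)}\cdots\|u_m\|_{L^\infty(K)}\,\|T\|_K,
\]
I would induct on $m$, starting from the trivial case $m=0$. Fix a chain $L=L_m\subset\subset L_{m-1}\subset\subset\cdots\subset\subset L_0\subset\subset K$ and smooth cutoffs $\chi_k\ge 0$ with $\chi_k=1$ on a neighbourhood of $L_k$ and $\supp\chi_k\subset L_{k-1}$. Writing $\beta$ for the standard K\"ahler form, $p$ for the bidimension of $T$, and $S_{k-1}:=dd^c u_{k-1}\wedge\cdots\wedge dd^c u_1\wedge T$ (closed and positive by the Bedford--Taylor construction \eqref{slott}), the mass of $dd^c u_k\wedge S_{k-1}$ on $L_k$ is controlled up to a universal constant by $\int\chi_k\,dd^c u_k\wedge S_{k-1}\wedge\beta^{p-k}$. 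Since $u_k$ is bounded and $S_{k-1}$ is closed, integration by parts yields
\[
\int\chi_k\,dd^c u_k\wedge S_{k-1}\wedge\beta^{p-k} = \int u_k\,dd^c\chi_k\wedge S_{k-1}\wedge\beta^{p-k} \le \|u_k\|_{L^\infty(L_{k-1})}\,\|dd^c\chi_k\|_\infty\,\|S_{k-1}\|_{L_{k-1}},
\]
and iterating $k=m,m-1,\ldots,1$ gives the claim.

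\smallskip

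To incorporate the factor $v$ with only an $L^1$ bound, set $S := dd^c u_m\wedge\cdots\wedge dd^c u_1\wedge T$, which is closed positive with $\|S\|_{L'}\lesssim\prod_i \|u_i\|_{L^\infty(K)}\,\|T\|_K$ on some intermediate $L\subset\subset L'\subset\subset K$ by the first step. It then suffices to prove the base estimate
\[
\|vS\|_L \le C_{L',L}\,\|v\|_{L^1(L')}\,\|S\|_{L'}
\]
for any closed positive $S$ and any psh $v$. By the mean value inequality, $\sup_{L'}v \le C\|v\|_{L^1(L'')}$ for $L'\subset\subset L''$, so replacing $v$ by $v-\sup_{L'}v$ we may assume $v\le 0$ on $L'$, reducing to estimating $\int_L(-v)\,d\|S\|$. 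Approximating from above by smooth psh $v_\varepsilon\downarrow v$ via standard convolution reduces the problem to the uniform estimate $\int_L(-v_\varepsilon)\,d\|S\| \lesssim \|v_\varepsilon\|_{L^1(L')}\,\|S\|_{L'}$.

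\smallskip

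The main obstacle is this last uniform estimate. It is not a general feature of positive measures $d\|S\|$, but uses crucially that $S$ is closed. I would handle it by one more round of integration by parts: writing locally $d\|S\|\simeq S\wedge\beta^p$ and $\beta\simeq dd^c(|z|^2)$, one moves a $dd^c$ off of $|z|^2$ onto the smooth factor $\chi(-v_\varepsilon)$ (legitimate since $v_\varepsilon$ is smooth and $S$ is closed), producing an integrand where $-v_\varepsilon$ is paired against a bounded, compactly supported form times $S\wedge\beta^{p-1}$, plus cross terms involving $d\chi\wedge d^c v_\varepsilon$ that are controlled by a Cauchy--Schwarz inequality for closed positive currents. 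This yields the $\|v_\varepsilon\|_{L^1(L')}$-factor with constants depending only on $\chi$ and the geometry of $L\subset\subset L'$. Monotone convergence $v_\varepsilon\downarrow v$ together with the lower semicontinuity of mass then promotes the bound to general psh $v$, completing the proof.
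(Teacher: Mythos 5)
The paper does not prove this proposition; it cites \cite[Chapter~III, Propositions~3.11 and~4.6]{Dem}, so there is no internal argument to compare against. Your first step, the $L^{\infty}$ version, is the standard Chern--Levine--Nirenberg argument and is fine.

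The problem is the reduction in the second step. You discard the product structure and claim that it suffices to prove, \emph{for an arbitrary closed positive current $S$ and arbitrary psh $v$},
\[
\|vS\|_L \le C_{L',L}\,\|v\|_{L^1(L')}\,\|S\|_{L'}.
\]
This estimate is false. Take $n\ge 2$, $S=[\,z_1=0\,]$ and $v=\log|z_1|^2$. Regularizing $v_\varepsilon=\log(|z_1|^2+\varepsilon^2)\downarrow v$, one has $v_\varepsilon\equiv 2\log\varepsilon$ on $\{z_1=0\}$, so $\|v_\varepsilon S\|_L = 2|\log\varepsilon|\cdot\|S\|_L\to\infty$, while $\|v_\varepsilon\|_{L^1(L')}$ stays bounded and $\|S\|_{L'}$ is fixed. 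So the uniform bound you need at the end of your argument cannot hold, and no amount of integration by parts or Cauchy--Schwarz will produce it, because the statement itself is false. (Your IBP sketch also doesn't close on its own terms: the ``good'' term $\chi\,dd^c(-v_\varepsilon)$ has a sign, but the term with $(-v_\varepsilon)dd^c\chi$ is again an integral of $(-v_\varepsilon)$ against the trace measure of $S$ --- the very quantity you are trying to estimate --- and the cross terms, after Cauchy--Schwarz, lead back to $\int(-v_\varepsilon)\,dd^c v_\varepsilon\wedge S\wedge\beta^{p-1}$, which is not controlled by $\|v_\varepsilon\|_{L^1}$ either.)

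What goes wrong conceptually is that you threw away exactly the information the proposition hinges on: that $S=dd^c u_m\wedge\cdots\wedge dd^c u_1\wedge T$ is built from \emph{bounded} psh functions $u_j$. That structure is what prevents the trace measure of $S$ from charging the polar set $\{v=-\infty\}$ and is what makes the left-hand side finite in the first place (note that $\|vS\|_L$ is not even a priori well-defined for your general $S$). A correct argument must carry the $u_j$'s and $v$ through the integration by parts together, using the boundedness of the $u_j$'s at every stage, rather than first collapsing $dd^c u_m\wedge\cdots\wedge dd^c u_1\wedge T$ into a black-box closed positive current and then trying to prove a universal $L^1$ bound against it.
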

Here $\|S\|_K$ is the \emph{mass semi-norm} of the order zero current
$S$ with respect to the compact set $K$, see \cite[Chapter
~III.3]{Dem}.

Recall that the \emph{unbounded locus} $L(u)$ of a psh function $u$
is the set of points $x\in X$ such that $u$ is unbounded in every
neighborhood of $x$. 
Note that if $u$ has analytic singularities, then $L(u)$ is an analytic set, locally defined by
$F=0$ where $u$ is given by \eqref{trond}.  
Demailly \cite{D} extended the definition \eqref{slott} to the case when the
intersection of the unbounded loci of the $u_j$ is small in a certain
sense. 
 The following is a simple corollary of 
\cite[Chapter ~III, Theorem~4.5]{Dem}.

\begin{prop}\label{propB}
Let $u_1,\ldots, u_m$ be psh functions on a complex manifold $X$ such that the
unbounded locus $L(u_j)$ is contained in analytic set $Z_j\subset X$ for
each $j$. Moreover, let $T$ be a closed positive current of
bidegree $(p,p)$ with support contained in an analytic set $W\subset
X$. 
Assume that 
\begin{equation*}
\codim(Z_{i_1}\cap \cdots \cap Z_{i_\ell}\cap W)\geq \ell + p
\end{equation*}
for all choices of $1\leq i_1<\cdots <i_\ell\leq m$. 
Then 
$u_m dd^c u_{m-1} \wedge\cdots\wedge dd^c u_1\wedge T$ and 
$dd^c u_m\wedge\cdots\wedge dd^c u_1\wedge T$ are well-defined and
have locally finite mass. The latter is a closed positive current. 
\end{prop}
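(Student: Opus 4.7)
The plan is to deduce Proposition \ref{propB} directly from Demailly's general existence result for generalized Monge-Amp\`ere products, \cite[Ch.~III, Thm.~4.5]{Dem}. That theorem asserts that for psh functions $u_1,\ldots,u_m$ and a closed positive current $T$ of bidegree $(p,p)$, the recursive construction \eqref{slott} produces well-defined closed positive currents, and the intermediate products $u_k\,dd^c u_{k-1}\wedge\cdots\wedge dd^c u_1\wedge T$ have locally finite mass, provided that
\[
\codim\bigl(L(u_{i_1})\cap\cdots\cap L(u_{i_\ell})\cap\supp T\bigr)\ \geq\ \ell+p
\]
for every $1\leq i_1<\cdots<i_\ell\leq m$.

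The first step is a purely set-theoretic observation: since $L(u_j)\subseteq Z_j$ by hypothesis and $\supp T\subseteq W$, one has
\[
L(u_{i_1})\cap\cdots\cap L(u_{i_\ell})\cap\supp T\ \subseteq\ Z_{i_1}\cap\cdots\cap Z_{i_\ell}\cap W,
\]
so the codimension assumption of the proposition implies Demailly's condition. Invoking his theorem then yields at once both that $dd^c u_m\wedge\cdots\wedge dd^c u_1\wedge T$ is closed and positive and that $u_m\,dd^c u_{m-1}\wedge\cdots\wedge dd^c u_1\wedge T$ has locally finite mass.

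The one auxiliary point worth checking, in order to really invoke the theorem inductively with the data at hand, is that the support of $T_k:=dd^c u_k\wedge\cdots\wedge dd^c u_1\wedge T$ remains inside $W$ throughout, so that the codimension hypothesis can be reapplied at each step to the correct base support. Off of $L(u_k)$ the function $u_k$ is locally bounded and \eqref{slott} reduces to the Bedford-Taylor definition, so $\supp T_k\setminus L(u_k)\subseteq\supp T_{k-1}$; on $L(u_k)\subseteq Z_k$ the new contribution to the support lies in $L(u_k)\cap\supp T_{k-1}\subseteq Z_k\cap W\subseteq W$. Hence $\supp T_k\subseteq W$ for every $k$, and the proposition follows. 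There is no substantive obstacle here: the content of the statement is simply the identification of the right codimension bound needed to feed Demailly's theorem, with the role of $\supp T$ played by the analytic set $W$.
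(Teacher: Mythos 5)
Your proof is correct and matches the paper's approach: the paper gives no actual argument here, it simply remarks that Proposition~\ref{propB} is ``a simple corollary of \cite[Chapter~III, Theorem~4.5]{Dem},'' and your two-line observation that $L(u_{i_1})\cap\cdots\cap L(u_{i_\ell})\cap\supp T\subseteq Z_{i_1}\cap\cdots\cap Z_{i_\ell}\cap W$ spells out exactly why. Your ``auxiliary point'' paragraph is superfluous: Demailly's theorem is a single statement taking all the $u_j$ and $T$ at once, so there is no need to re-apply it step by step or to track that $\supp T_k$ stays inside $W$ (though your reasoning there is sound, and can be shortened to $\supp T_k=\supp dd^c(u_k T_{k-1})\subseteq\supp T_{k-1}$). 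In short, the essential content is the set-theoretic inclusion, and you have it.
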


These products satisfy the following continuity properties, see, e.g.,
\cite[Chapter~III, Proposition~4.9]{Dem}.

\begin{prop}\label{addis}
Let $u_j$ and $T$ be as in Proposition \ref{propB}. 
If $u_j^{(\iota)}$ are sequences of psh
functions decreasing to $u_j$, then 
\begin{eqnarray*}
&u_m^{(\iota)}dd^c u_{m-1}^{(\iota)}\wedge\cdots\wedge dd^c u_1^{(\iota)}\wedge T
\to 
u_m dd^c u_{m-1}\wedge\cdots\wedge dd^c u_1\wedge T
\\
& dd^c u_m^{(\iota)}\wedge\cdots\wedge dd^c u_1^{(\iota)}\wedge T
\to 
 dd^c u_m\wedge\cdots\wedge dd^c u_1\wedge T.
\end{eqnarray*}
\end{prop}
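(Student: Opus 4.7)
The plan is to prove the two convergences in tandem by induction on $m$, reducing everything to the classical Bedford--Taylor continuity theorem for locally bounded psh functions through truncation at level $-M$ and then letting $M\to\infty$. Since the statement is local, I would work in a coordinate chart where the analytic sets $Z_j$ and $W$ are cut out by holomorphic tuples.

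For the induction, the base case $m=0$ is trivial. Assuming the result for $m-1$, the current $S^{(\iota)} := dd^c u_{m-1}^{(\iota)}\wedge\cdots\wedge dd^c u_1^{(\iota)}\wedge T$ converges weakly to $S := dd^c u_{m-1}\wedge\cdots\wedge dd^c u_1\wedge T$, and both are well-defined closed positive currents of bidegree $(p+m-1,p+m-1)$ by Proposition \ref{propB}. Since $dd^c$ is weakly continuous and both $dd^c u_m^{(\iota)}\wedge S^{(\iota)}$ and $dd^c u_m\wedge S$ are by definition $dd^c$ applied to $u_m^{(\iota)} S^{(\iota)}$ and $u_m S$ respectively, both asserted convergences reduce to showing $u_m^{(\iota)} S^{(\iota)} \to u_m S$ in the sense of currents.

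To handle this, I would split as
\[
u_m^{(\iota)} S^{(\iota)} - u_m S = (u_m^{(\iota)} - u_m)\, S^{(\iota)} + u_m\, (S^{(\iota)} - S)
\]
and truncate: for $M>0$, set $u_m^M := \max(u_m, -M)$, $u_m^{(\iota),M} := \max(u_m^{(\iota)}, -M)$, which are locally bounded psh functions with $u_m^{(\iota),M}\downarrow u_m^M$. At the truncated level, Bedford--Taylor continuity applied to the closed positive current $S^{(\iota)}$ lets us pass to the limit in $\iota$, and the first summand $(u_m^{(\iota),M}-u_m^M)S^{(\iota)}$ tends weakly to zero. The errors from truncation are supported in $\{u_m<-M\}$ and $\{u_m^{(\iota)}<-M\}$; their mass against $S^{(\iota)}$ is controlled by a Chern--Levine--Nirenberg bound of the type in Proposition~\ref{cln}, in the version extended to the unbounded setting, and tends to zero as $M\to\infty$ uniformly in $\iota$. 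For the second summand, $u_m$ is upper semicontinuous and bounded above, so approximation by a decreasing sequence of smooth functions combined with the weak convergence $S^{(\iota)}\to S$ and the uniform mass bounds yields the convergence. A diagonal argument in $\iota$ and $M$ then completes the step.

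The main obstacle is the uniform mass control on the truncation error: the classical CLN inequalities of Proposition~\ref{cln} require local boundedness of each $u_j$, whereas here the $u_j^{(\iota)}$ acquire unbounded singularities as $\iota\to\infty$. The needed refinement is Demailly's extension allowing controlled mass accumulation on intersections of unbounded loci, which is valid precisely under the codimension hypothesis $\codim(Z_{i_1}\cap\cdots\cap Z_{i_\ell}\cap W)\geq \ell+p$. This is exactly the input that makes the truncation uniformly bounded and closes the induction, as developed in detail in \cite[Chapter~III]{Dem}.
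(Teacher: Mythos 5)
The paper provides no proof of Proposition~\ref{addis}; it is cited directly from \cite[Chapter~III, Proposition~4.9]{Dem}, so there is no in-paper argument to compare against. Your outline has the right skeleton---induction on $m$, truncation at level $-M$, and splitting the difference into a moving-function part and a moving-current part---but the two crucial steps are stated in a way that conceals the genuine difficulty, which is the quasi-continuity / capacity argument.

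First, the claim that $(u_m^{(\iota),M}-u_m^M)S^{(\iota)}\to 0$ ``by Bedford--Taylor continuity applied to $S^{(\iota)}$'' does not follow as stated: the nonnegative factor $u_m^{(\iota),M}-u_m^M$ decreases pointwise to zero but is only upper semicontinuous, so Dini's theorem cannot be invoked globally and uniform mass bounds on $S^{(\iota)}$ alone do not suffice; the actual argument must first remove an open set of arbitrarily small Monge--Amp\`ere capacity on which the functions are not continuous, apply Dini on its complement, and bound the contribution from the small-capacity set by a capacity inequality. Second, and more seriously, your treatment of $u_m(S^{(\iota)}-S)\to 0$ via a decreasing smooth approximation $v_n\downarrow u_m$ does not close: the error $(v_n-u_m)(S^{(\iota)}-S)$ is not controlled by CLN mass bounds, since $v_n-u_m$ is unbounded near the poles of $u_m$ and one needs the masses of $S^{(\iota)}$ not to concentrate there, uniformly in $\iota$. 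That uniform non-concentration is exactly what the capacity estimate of \cite[Chapter~III, Proposition~4.1 and Theorem~4.5]{Dem} furnishes under the codimension hypothesis of Proposition~\ref{propB}. Your closing sentence appeals to ``Demailly's extension,'' but that extension is essentially the content of the proposition itself; as written, the sketch does not reprove it so much as restate it, and the diagonal argument in $(\iota,M)$ cannot be completed without the capacity machinery.
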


We have the following generalization of \eqref{blabla}. 
\begin{lma}\label{lemmaA}
Assume that $W\subset X$ is a subvariety of pure codimension $p$, and
assume that $Z\subset X$ is {a} subvariety, such that $\codim_X (Z\cap
W)>p$. Moreover, assume that $b_1,\ldots, b_\ell$ are locally bounded
psh functions. Then 
\begin{equation}\label{malva}
\1_Z dd^c b_\ell\wedge \cdots \wedge dd^c b_1 \wedge [W]=0.
\end{equation} 
\end{lma}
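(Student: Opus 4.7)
My plan is to argue by induction on $\ell$. The base case $\ell=0$ is precisely \eqref{blabla}: $\1_Z[W]$ is a closed positive $(p,p)$-current supported in $Z\cap W$, a subvariety of codimension $>p$, so it vanishes by the dimension principle.

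For the inductive step, set $S:=dd^c b_{\ell-1}\wedge\cdots\wedge dd^c b_1\wedge[W]$, so the current in \eqref{malva} is $T=dd^c b_\ell\wedge S$, and the inductive hypothesis gives $\1_Z S=0$. I would approximate $b_\ell$ by a decreasing sequence of smooth psh functions $b_\ell^{(\iota)}$ and set $T^{(\iota)}:=dd^c b_\ell^{(\iota)}\wedge S$. Proposition~\ref{addis} gives $T^{(\iota)}\to T$ weakly, and since $dd^c b_\ell^{(\iota)}$ is smooth, multiplication by it commutes with $\1_Z$ on the order-zero current $S$, yielding
\[
\1_Z T^{(\iota)} = dd^c b_\ell^{(\iota)}\wedge\1_Z S = 0.
\]
The remaining task is to show that this annihilation persists in the weak limit $T^{(\iota)}\to T$, which does not follow from weak convergence alone since $\1_Z$ is discontinuous.

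To handle this I would use the analytic-continuation description of $\1_{X\setminus Z}$ recalled after \eqref{bla}: locally writing $Z=\{F=0\}$, the current $\1_{X\setminus Z}R$ is the analytic continuation of $|F|^{2\lambda}R$ to $\lambda=0$, for any closed positive $R$. For $\Re\lambda>0$ the weight $|F|^{2\lambda}$ is bounded and continuous, so $|F|^{2\lambda}T^{(\iota)}\to|F|^{2\lambda}T$ weakly; both families extend holomorphically in $\lambda$ across $\lambda=0$, with values there equal to $\1_{X\setminus Z}T^{(\iota)}=T^{(\iota)}$ (by the previous step) and $\1_{X\setminus Z}T$, respectively. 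The CLN estimates of Proposition~\ref{cln}, combined with uniform boundedness of $\|b_\ell^{(\iota)}\|_{L^\infty}$ (since $b_\ell$ is locally bounded), furnish uniform-in-$\iota$ bounds on these holomorphic families, and a Vitali-type interchange of the limits $\iota\to\infty$ and $\lambda\to 0$ then gives $\1_{X\setminus Z}T=\lim_\iota T^{(\iota)}=T$, i.e.\ $\1_Z T=0$. The main obstacle is exactly this interchange of limits; the CLN bounds together with the analytic-continuation formalism are the essential technical ingredients needed to bypass the failure of weak convergence to commute with multiplication by the indicator $\1_Z$.
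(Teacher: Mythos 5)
Your induction-on-$\ell$ strategy is genuinely different from the paper's proof, which instead reduces directly to the case $W = X$ (first assuming $Z$ smooth, then stratifying), then to $W$ smooth via the inclusion $i\colon W\hookrightarrow X$, and finally to general $W$ via an embedded resolution. Your base case is fine, and the general shape of the inductive step is plausible, but as written the interchange of limits has a genuine gap that the vague appeal to CLN and a ``Vitali-type'' argument does not close.

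The crux is this. To justify
\[
\lim_{\iota}\lim_{\lambda\to 0^+}\langle |F|^{2\lambda}T^{(\iota)},\beta\rangle
=\lim_{\lambda\to 0^+}\lim_\iota\langle |F|^{2\lambda}T^{(\iota)},\beta\rangle,
\]
what one actually needs is that
$\|(1-|F|^{2\lambda})\,dd^c b_\ell^{(\iota)}\wedge S\|_L\to 0$ as $\lambda\to 0^+$ \emph{uniformly in $\iota$}. You invoke Proposition~\ref{cln} and the uniform $L^\infty$-bound on $b_\ell^{(\iota)}$ to furnish this. But notice that if one reads $\|v\|_{L^1(K)}$ in Proposition~\ref{cln} as Lebesgue $L^1$, then the resulting estimate is \emph{false} in the generality you need: take $v=1-|z_n|^{2\lambda}$ (bounded), $u_1=b_1$ bounded psh, $T=[z_n=0]$. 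Then $v\,dd^c b_1\wedge T = dd^c b_1\wedge [z_n=0]$ has mass on $L$ independent of $\lambda$, while $\|v\|_{L^1(K)}\|b_1\|_\infty\|T\|_K\to 0$. The correct form of the CLN inequality bounds $\|v\,dd^c u_m\wedge\cdots\wedge dd^c u_1\wedge T\|_L$ in terms of $\int_K(-v)\,T\wedge\omega^{n-p}$, i.e.\ the $L^1$-norm of $v$ against the \emph{trace measure} of $T$, not Lebesgue measure; the paper can use the simpler-looking statement only because it has first reduced to $T=1$, where the trace measure \emph{is} Lebesgue measure. In your inductive step, $T=S$, and $\int_K(1-|F|^{2\lambda})\,S\wedge\omega^{\cdot}\to 0$ is equivalent (by dominated convergence) to $(S\wedge\omega^{\cdot})(Z\cap K)=0$, i.e.\ to the inductive hypothesis $\1_Z S=0$. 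So the inductive hypothesis is required a second time, precisely to power the uniform CLN estimate — your proposal uses $\1_Z S=0$ only once (to get $\1_Z T^{(\iota)}=0$), and as sketched would ``prove'' the interchange without it, which cannot be right. Separately, the Vitali framing needs uniform bounds in a \emph{complex} neighborhood of $\lambda=0$, where $|F|^{2\lambda}-1$ is no longer real, let alone psh, so CLN does not apply there; what you actually want is a plain Moore--Osgood interchange using the uniform real-$\lambda$ estimate above. If you replace the Vitali step by that direct uniform estimate, with the CLN stated against the trace measure of $S$ and the observation that $\1_Z S=0$ is exactly what makes the $L^1$-norm tend to zero, then the induction does close — and this would give an arguably more self-contained proof than the paper's, avoiding the embedded resolution of $W$. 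As written, however, the argument is incomplete.
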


\begin{remark}\label{dimen}
Assume that $W\subset X$ is a subvariety and $U\subset X$ is a
constructible set. 
Then it follows from Lemma \ref{lemmaA} that 
\begin{equation}\label{planet}
\1_U dd^c b_\ell\wedge \cdots\wedge dd^c b_1 \wedge [W]=
dd^c b_\ell\wedge \cdots\wedge dd^c b_1 \wedge \1_U [W].
\end{equation}
To prove \eqref{planet} we may assume that $U$
is a subset of $W$ and that $W$ is irreducible. We then need to prove
that 
\begin{equation*}
\1_U dd^c b_\ell\wedge \cdots\wedge dd^c b_1 \wedge [W]=
dd^c b_\ell\wedge \cdots\wedge dd^c b_1 \wedge [W]
\end{equation*}
if $U$ is dense in $W$ and 
$\1_U dd^c b_\ell\wedge \cdots\wedge dd^c b_1 \wedge [W]=0$ otherwise. 

To see this, note on the one hand, that if $U$ is not dense in $W$, then, since $W$ is irreducible,
$\overline U$ is a subvariety of $W$ of positive codimension.
Thus 
\begin{equation*}
\1_U dd^c b_\ell\wedge \cdots\wedge dd^c b_1 \wedge [W]=
\1_U\1_{\overline U} dd^c b_\ell\wedge \cdots\wedge dd^c b_1 \wedge [W]=0
\end{equation*}
in view of \eqref{frisk} and Lemma ~\ref{lemmaA}. 
On the other hand, if $U$ is dense in $W$, i.e., $\overline U=W$, since  $\overline{\overline U\setminus U}$ is a
subvariety of $W$ of positive codimension, then again using Lemma
~\ref{lemmaA}, note that 
\begin{multline*}
\1_U dd^c b_\ell\wedge \cdots\wedge dd^c b_1 \wedge [W]
=
\1_{W} dd^c b_\ell\wedge \cdots\wedge dd^c b_1 \wedge [W]
-\\
\1_{\overline U\setminus U} \1_{\overline{\overline U\setminus U}}
dd^c b_\ell\wedge \cdots\wedge dd^c b_1 \wedge [W] =
dd^c b_\ell\wedge \cdots\wedge dd^c b_1 \wedge [W]. 
\end{multline*}
\end{remark}

\begin{proof}[Proof of Lemma ~\ref{lemmaA}]
We may assume that $X$ is connected. Let us first assume that
$W=X$ so that $[W]=1$ and $Z\subset X$ is a subvariety of positive codimension. Then the lemma follows by a small
modification of the proof of Corollary ~3.3 in \cite{AW}: 
It is enough to consider the case when $Z$ is smooth.
The general case then follows by stratification. Since it is a local statement,
we may choose coordinates $z$ so that $Z=\{z_1=\cdots = z_q =0\}$,
where $q=\codim Z$. In
view of \eqref{frisk} it is enough to prove that $\1_{\{z_1=0\}}dd^c b_\ell\wedge \cdots \wedge dd^c b_1 =0$. Notice that in a set
$|z_1|\le r, |(z_2,\ldots, z_n)|\le r'$, we have that
$\1_{\{z_1=0\}} (dd^c b_\ell\wedge\cdots \wedge dd^c b_1)$ is the 
limit of
$$
-(|z_1|^{2\lambda}-1)(dd^c b_\ell\wedge\cdots \wedge dd^c b_1)
$$
as $\lambda\rightarrow 0^+$,
cf.\ the beginning of this section. 
Since $|z_1|^{2\lambda}-1$ is psh, \eqref{malva} follows from
Proposition ~\ref{cln} 
since the total mass of 
$|z_1|^{2\lambda}-1$ tends to $0$ when $\lambda\to 0^+$.

\smallskip

Next, let us assume that $W$ is smooth. 
We claim that then 
\begin{equation}\label{ros}
dd^cb_\ell\wedge\cdots\wedge dd^cb_1\wedge [W]=
i_*(dd^c i^* b_\ell\wedge\cdots\wedge dd^ci^* b_1), 
\end{equation}
where $i$ is the inclusion $i:W\to X$. 
Taking this for granted, since $\dim (Z\cap
W)>p$ it follows that $i^{-1}Z$ is a proper subvariety of $W$ and thus 
$\1_{i^{-1}Z} dd^c i^*b_\ell \wedge \cdots \wedge {dd^c}
i^*b_1$ vanishes by the argument above, and thus in view of
\eqref{bla}, 
\[
\1_Z dd^cb_\ell\wedge\cdots\wedge dd^cb_1\wedge [W]
=
i_*\big (\1_{i^{-1}Z} dd^c i^*b_\ell \wedge \cdots \wedge
dd^c i^*b_1\big )=0.
\]

It is clear that \eqref{ros} holds if the $b_j$ are smooth. For general  
locally bounded psh $b_j$ let $b^{(\iota)}_j$ be sequences 
of smooth psh functions decreasing to $b_j$. Then 
\eqref{ros} follows from the smooth case and Proposition ~\ref{addis}.

\smallskip

For the general case, let $\pi:\widetilde X\to X$ be an embedded
resolution of $W$ in $X$. In particular, 
$\widetilde W:=\overline{\pi^{-1}W_{\text{reg}}}$ is a smooth manifold
of pure codimension $p$ in $\widetilde X$ and $\pi$ is a
biholomorphism outside a hypersurface $H\subset \widetilde X$, such
$\pi (H\cap \widetilde W)$ has codimension $>p$ in $X$. 
It follows that outside $\pi (H\cap \widetilde W)$, $\pi_*[\widetilde
W]=[W]$, and by the dimension principle this holds everywhere on
$X$.

If $b_j$ are smooth, then in view of \eqref{daghem} 
\begin{equation}\label{kassa}
dd^c b_\ell\wedge\cdots\wedge dd^c b_1\wedge [W]=
\pi_*\big (dd^c \pi^* b_\ell\wedge\cdots\wedge dd^c
\pi^* b_1\wedge [\widetilde W]\big ).
\end{equation}
For general $b_j$ \eqref{kassa} follows by approximating by sequences
of smooth psh functions converging to $b_j$ using Proposition
~\ref{addis}. 

Next let $\widetilde Z=\pi^{-1} Z$. Then $\widetilde
Z\cap \widetilde W\subset \widetilde X$ is a subvariety of codimension
$>p$. Indeed, for each connected component $\widetilde W_j$ of
$\widetilde W$, $\widetilde Z\cap \widetilde W_j$ is a subvariety of
$\widetilde W_j$. Assume that $\widetilde Z\cap \widetilde W_j=
\widetilde W_j$ for some $j$. Then 
\[
Z\cap W\supset \pi(\widetilde Z\cap \widetilde W) \supset \pi
(\widetilde W_j);
\]
however $\pi(\widetilde W_j)$ has codimension $p$, which contradicts that
$Z\cap W$ has codimension $>p$. Thus $\widetilde Z\cap \widetilde W_j$
is a proper subvariety of $\widetilde W_j$ for each $j$. Thus, as
proved above, 
\[
\1_{\widetilde Z} dd^c \pi^* b_\ell\wedge\cdots\wedge dd^c
\pi^* b_1\wedge [\widetilde W] =0
\]
and, in view of \eqref{bla}, we conclude that 
\[
\1_Z dd^c b_\ell\wedge\cdots\wedge dd^c b_1\wedge [W] = 
\pi_*\big (\1_{\widetilde Z} dd^c \pi^* b_\ell\wedge\cdots\wedge dd^c
\pi^* b_1\wedge [\widetilde W] \big )=0.
\]
\end{proof}

\section{Generalized mixed Monge-Amp\`ere products}\label{general}

Assume 
that $u_1,\ldots, u_m$ are psh functions with analytic
singularities on a complex manifold $X$ with unbounded loci $Z_1,\ldots,
Z_m$, respectively. 
Moreover assume that $U_1,\ldots,
U_m\subset X$ are constructible sets contained in $X\setminus Z_1,
\ldots, X\setminus Z_m$, respectively. 
Inspired by \cite[Section~4]{AW} 
we consider currents
\begin{equation}\label{blomma2}
dd^c u_m\1_{U_m}\wedge\cdots\wedge dd^c u_1 \1_{U_1}
\end{equation}
defined recursively as 
\begin{equation}\label{insta2}
dd^c u_{k}\1_{U_k}\wedge \cdots\wedge dd^c u_1\1_{U_1}:=
dd^c\big
(u_{k}\1_{U_{k}}dd^c u_{k-1}\1_{U_{k-1}}\wedge\cdots\wedge
dd^c u_1\1_{U_1}\big ) 
\end{equation}
for $k=1,\ldots, m$. 
In particular, if $u_j=u$ and $U_j=X\setminus Z_j$ for all $j$, then \eqref{insta2}
coincides with \eqref{defen}. 
For aesthetic reasons, and to emphasize that 
$U_j$ is associated with $u_j$, we 
choose to write \eqref{blomma2} rather than
\[
dd^c u_m\wedge \1_{U_m}dd^c u_{m-1}\cdots dd^c u_2\wedge \1_{U_2}dd^c
u_1\1_{U_1}. 
\]
We say that a current of the form $\1_U
dd^c u_m\1_{U_m}\wedge\cdots\wedge dd^c u_1 \1_{U_1}$, where $U$ is a
constructible set, is a 
\emph{(closed
  positive) current with analytic singularities}. We also include currents
\eqref{blomma2} with no factor $dd^c u_j\1_{U_j}$; in other words
$\1_U$ is also a current with analytic singularities. 
For $u_j$ of the form $\log |F_j|^2$ currents like \eqref{blomma2} 
were defined in \cite[Section~5]{ASWY}. 
Note that $dd^c u_{m}\1_{U_{m}}\wedge \cdots\wedge
dd^c u_1\1_{U_1}$ vanishes unless $U_1$ is dense in (at least one
connected component of) $X$.

Proposition ~\ref{liseberg} below asserts that this definition makes sense
and that the currents $dd^c u_{k}\1_{U_k}\wedge\cdots\wedge
dd^c u_1\1_{U_1}$ are
positive and closed. Moreover, Proposition ~\ref{samma} asserts that
\begin{equation}\label{blomma}
dd^c u_m\1_{X\setminus Z_{m}}\wedge\cdots\wedge dd^c u_1\1_{X\setminus Z_1}
\end{equation}
coincides with $dd^c u_m\wedge\cdots\wedge dd^c u_1$,
when this current is well-defined, cf.\ Proposition ~\ref{propB}. 
It is therefore tempting to think of \eqref{blomma} as a generalized mixed
Monge-Amp\`ere product, and just denote it by
$dd^c u_m\wedge\cdots\wedge dd^c u_1$. 
The following example shows, however, that this ``product''
lacks some properties one would
naturally ask for of a product. In particular, it is not additive in
any factor except the right-most one 
nor commutative.

\begin{ex}
Let $u_1=\log|z_1|^2$ and $u_2=\log|z_1z_2|^2$ in $X=\C^2$. Then
$u_1$ and $u_2$ are psh with analytic singularities with
unbounded loci $Z_1=\{z_1=0\}$ and $Z_2=\{z_1=0\}\cup\{z_2=0\}$,
respectively. 
In view of the Poincar\'e-Lelong formula 
it follows that 
\begin{equation*}
dd^c u_2\1_{X\setminus Z_2}\wedge dd^c u_1\1_{X\setminus Z_1}=
dd^c\big (u_2\1_{X\setminus Z_2}[z_1=0]\big)=0
\end{equation*}
but
\begin{equation*}
dd^c u_1\1_{X\setminus Z_1}\wedge dd^c u_2 \1_{X\setminus Z_2}= dd^c\big
(u_1\1_{X\setminus Z_1}
([z_1=0]+[z_2=0])\big)=[z_1=0]\wedge [z_2=0]=[0], 
\end{equation*}
so that $dd^c u_2\1_{X\setminus Z_2}\wedge dd^c u_1\1_{X\setminus Z_1}$ is not commutative in the factors
$dd^c u_j\1_{X\setminus Z_j}$. 

Moreover, let $u_3=\log|z_2|^2$. Then $u_3$ is psh with analytic
singularities with unbounded locus $Z_3=\{z_2=0\}$. Note that $u_2=u_1+u_3$. 
Now 
\begin{multline*}
dd^c u_1\1_{X\setminus Z_1}\wedge dd^c u_1\1_{X\setminus Z_1} + dd^c u_3\1_{X\setminus Z_3}\wedge dd^c u_1\1_{X\setminus Z_1}
=\\
0+ [z_2=0]\wedge [z_1=0]= [0]\neq dd^c u_2\1_{X\setminus Z_2}\wedge
dd^c u_1\1_{X\setminus Z_1}. 
\end{multline*}
\end{ex}

\begin{prop}\label{liseberg}
Let $u_j$ and $U_j$ be as above. 
Assume that $dd^c u_k\1_{U_k}\wedge\cdots\wedge dd^c u_1\1_{U_1}$ is
inductively defined via \eqref{insta2}. 
Let $u_{k+1}^{(\iota)}$ be a sequence of 
smooth psh functions in $X$ decreasing to $u_{k+1}$. Then
$$
u_{k+1}\1_{U_{k+1}} dd^c u_k\1_{U_k}\wedge\cdots\wedge
dd^c u_1\1_{U_1} 
:=
\lim _\iota u^{(\iota)}_{k+1}\1_{U_{k+1}} dd^c u_k\1_{U_k}\wedge\cdots\wedge dd^c u_1\1_{U_1}
$$
has locally finite mass and does not depend on the choice of sequence
$u^{(\iota)}_{k+1}$. 
Moreover 
$dd^c u_{k+1}\1_{U_{k+1}}\wedge \cdots\wedge dd^c u_1\1_{U_1}$, defined by
\eqref{insta2},  
is positive and closed. 
\end{prop}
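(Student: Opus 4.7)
I would prove the proposition by induction on $k$, simultaneously with the recursive definition \eqref{insta2}. The base case $T_0 = 1$ is trivial, and $\1_{U_1}$ is closed positive by the constructible-set form of Skoda--El Mir recalled in Section \ref{mma}. For the inductive step, assume $T_k := dd^c u_k\1_{U_k} \wedge \cdots \wedge dd^c u_1\1_{U_1}$ is closed positive with locally finite mass. The same Skoda--El Mir argument then shows that $S_k := \1_{U_{k+1}} T_k$ is again closed positive with locally finite mass.

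The heart of the argument is to make sense of $u_{k+1} \cdot S_k$. Since $u_{k+1}$ is psh (hence locally bounded above and upper semicontinuous) and $S_k$ is closed positive, the $m=0$ case of Chern--Levine--Nirenberg (Proposition \ref{cln}) yields $u_{k+1} \in L^1_{\mathrm{loc}}(S_k)$, so $u_{k+1} \cdot S_k$ is a well-defined order-zero current of locally finite mass. For any smooth decreasing sequence $u^{(\iota)}_{k+1} \searrow u_{k+1}$, picking a local upper bound $M$ for $u^{(0)}_{k+1}$, the positive currents $(M - u^{(\iota)}_{k+1}) S_k$ increase to $(M - u_{k+1}) S_k$, which by CLN has locally finite mass. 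Monotone convergence then gives $u^{(\iota)}_{k+1} S_k \to u_{k+1} \cdot S_k$ as order-zero currents. This proves the existence of the limit, its locally finite mass, and its independence of the approximating sequence.

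For closedness and positivity of $dd^c u_{k+1}\1_{U_{k+1}} \wedge \cdots \wedge dd^c u_1 \1_{U_1} := dd^c(u_{k+1} \cdot S_k)$, I would apply $dd^c$ (a continuous operator on currents) to the above convergence, obtaining $dd^c(u^{(\iota)}_{k+1} S_k) \to dd^c(u_{k+1} \cdot S_k)$. Since $u^{(\iota)}_{k+1}$ is smooth and $S_k$ is closed, the Leibniz rule simplifies $dd^c(u^{(\iota)}_{k+1} S_k)$ to $dd^c u^{(\iota)}_{k+1} \wedge S_k$, which is closed and positive because $u^{(\iota)}_{k+1}$ is smooth psh and $S_k$ is closed positive. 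Hence the limit is a weak limit of closed positive currents and is itself closed positive, completing the induction. The one delicate point is the invocation of CLN with $m=0$: the proposition as stated concerns currents of the form $v\, dd^c u_m \wedge \cdots \wedge T$, while $S_k = \1_{U_{k+1}} T_k$ is a general closed positive current. Strictly speaking one is using the classical fact---standing slightly apart from Proposition \ref{cln} but proved by the same method---that psh functions are locally integrable against arbitrary closed positive currents with the CLN-type mass bound.
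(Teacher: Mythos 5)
The crucial step in your argument---that $u_{k+1}\in L^1_{\mathrm{loc}}(S_k)$ because ``psh functions are locally integrable against arbitrary closed positive currents with the CLN-type mass bound''---is false. Proposition~\ref{cln} requires at least one factor $dd^c u_j$ with $u_j$ locally bounded; its proof moves $dd^c$ by parts so that the unbounded $v$ appears only through $dd^c v\ge 0$, and with $m=0$ there is nothing to integrate against. Concretely, in $\C^2$ the psh function $\log|z_1|^2$ is \emph{not} locally integrable against the closed positive $(1,1)$-current $[z_1=0]$, and no estimate of the form $\|vT\|_L\le C\|v\|_{L^1(K)}\|T\|_K$ can hold. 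Thus the ``classical fact'' you appeal to at the end does not exist, and without it both the locally-finite-mass claim and the monotone-convergence step collapse.

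What should save the argument is precisely the hypothesis $U_{k+1}\subset X\setminus Z_{k+1}$: the cutoff $S_k=\1_{U_{k+1}}T_k$ is built to avoid the unbounded locus of $u_{k+1}$. But your proof never actually invokes this hypothesis for the integrability step; you only use it implicitly to apply Skoda--El Mir. Making the cutoff do its job requires structural information about $S_k$ that goes beyond ``closed positive.'' The paper's proof obtains exactly this: it pulls everything back by a log resolution $\pi\colon\widetilde X\to X$ under which $T_k$ becomes a pushforward of a finite sum $\sum_I dd^c b_I\wedge[V_I]$ with $b_I$ locally bounded psh and $V_I$ analytic cycles. After throwing away the components of $V_I$ on which $\widetilde U_{k+1}$ is not dense (Remark~\ref{dimen}), the fact that $U_{k+1}$ avoids $Z_{k+1}$ translates into the statement that $|D_{k+1}|\cap|V_I'|$ has positive codimension in $|V_I'|$; this is exactly the dimension condition of Proposition~\ref{propB}, which then yields local finiteness of $\pi^*u_{k+1}\,dd^c b_I\wedge[V_I']$ and, via Proposition~\ref{addis}, independence of the approximating sequence. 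That inductive description of $T_k$ as a pushforward of $\sum_I dd^c b_I\wedge[V_I]$ is the missing idea in your proposal; without it, or some substitute controlling where the mass of $S_k$ lives relative to $Z_{k+1}$, there is no way to deduce the needed integrability from CLN.
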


\begin{remark}\label{stress}
Note that Proposition ~\ref{liseberg} asserts that if $T$ is {a}
current with analytic singularities, $u$ is a psh function with
analytic singularities with unbounded locus $Z$, and $U$ is a
constructible set contained in $X\setminus Z$, then 
\[
dd^c u\wedge \1_U T:=dd^c(u\1_UT)
\]
 is a well-defined current with analytic singularities. 
\end{remark}

The proof is a generalization of the proof of Proposition ~4.1 in
\cite{AW}.

\begin{proof}
Since the statement is local we may assume that
$u_j=\log|F_j|^2+v_j$. Moreover without loss of generality we may
assume that $X$ is connected and that $U_1$ is dense in $X$, and thus $\1_{U_1}=1$ as a
distribution. Indeed, otherwise $dd^c u_k\1_{U_k}\wedge\cdots\wedge
dd^c u_1\1_{U_1}=0$. 

Let $\pi\colon \widetilde X\to X$ be a smooth modification such that 
locally on $\widetilde X$, $\pi^*F_j=f_jf_j'$, where 
$f_j$ is a holomorphic function and 
$f_j'$ is a nonvanishing tuple of holomorphic functions, for each
$j$. 
It follows that $\pi^*u_j=\log|f_j|^2+b_j$, where $b_j:=\log|f_j'|^2+\pi^*v_j$ is
psh and locally bounded, cf.\ the proof of Proposition~4.1 in
\cite{AW}. 
Note that for two different local representations, the $f_j$ differ by
a nonvanishing holomorphic factor, and thus
the $b_j$ differ by a pluriharmonic term. Therefore the local divisors
$\{f_j=0\}$ define a global divisor $D_j$ on $\widetilde X$, such that
$\pi^{-1} Z_j=|D_j|$ and
moreover the currents $dd^cb_j$ define a global positive current on $\widetilde
X$. 
By the Poincar\'e-Lelong formula 
\begin{equation*}
dd^c\pi^* u_j=[D_j]+dd^c b_j.
\end{equation*}

\smallskip
Let $u_{1}^{(\iota)}$ be a sequence of smooth psh functions
decreasing to  $u_1$.  Since $\pi$ is a modification 
$$
dd^c u_1^{(\iota)}=\pi_*(dd^c\pi^* u_1^{(\iota)})\to \pi_*\big(dd^c \pi^* u_1\big)
=\pi_*\big([D_1]+dd^c b_1\big)
$$
and it follows that
\begin{equation*}
dd^c u_1=\pi_*\big([D_1]+dd^c b_1\big).
\end{equation*}

Let us now assume that we have proved that 
$u_k\1_{U_k}dd^c u_{k-1}\1_{U_{k-1}}\wedge \cdots\wedge dd^c
u_1\1_{U_1}$ is well-defined with the desired properties 
and that $dd^c u_k \1_{U_k}\wedge\cdots\wedge dd^c u_1 \1_{U_1}$ is the pushforward of 
\begin{equation}\label{gunnebo}
 \sum_{I=\{i_1,\ldots, i_s\}\subset\{1,\ldots, k\}} 
dd^cb_I \wedge [V_I], 
\end{equation}
where 
$dd^c b_{I}=dd^c b_{i_s}\wedge \cdots\wedge dd^c
b_{i_1}$ and $V_I$ are analytic cycles\footnote{{formal linear combinations of irreducible analytic sets}}
of pure codimension $k-s$ on
$\widetilde X$.
Since the $b_j$ are determined up to addition by a pluriharmonic
term, each 
$dd^cb_I \wedge [V_I]$ is a globally defined current
on $\widetilde X$, cf.\ Section ~\ref{classical}. 

We will prove that:  
\smallskip

\noindent (i) \emph{$u_{k+1}\1_{U_{k+1}} dd^c u_k \1_{U_k}\wedge\cdots\wedge
  dd^c u_1 \1_{U_1} :=\lim_\iota u_{k+1}^{(\iota)}\1_{U_{k+1}}
  dd^c u_k \1_{U_k}\wedge\cdots\wedge dd^c u_1 \1_{U_1} $ has locally finite
mass and is
independent of $u_{k+1}^{(\iota)}$,}

\smallskip

\noindent (ii) \emph{the current 
\begin{equation*}
dd^c u_{k+1}\1_{U_{k+1}}\wedge\cdots\wedge
dd^c u_1 \1_{U_1}:=dd^c\big(u_{k+1} \1_{U_{k+1}} dd^c
u_k \1_{U_k}\wedge\cdots\wedge dd^c u_1 \1_{U_1}\big),
\end{equation*}
is the pushforward of a current of the form \eqref{gunnebo}.}

\smallskip
\noindent As soon as (i) and (ii) are verified, the proposition
follows by induction. 
\smallskip

Note that $\widetilde U_j:= \pi^{-1}U_j$ is a constructible set in
$\widetilde X$. 
Let us consider one summand 
$dd^c b_I \wedge [V_I]$
in \eqref{gunnebo}. 
Let $V_I'$ be the union of the irreducible components $V_I^j$ of $V_I$
such that $\widetilde U_{k+1}\cap V_I^j$ is dense in $V_I^j$. Then in
view of Remark ~\ref{dimen} 
\begin{equation*}
\1_{\widetilde U_{k+1}}  
dd^c b_I\wedge [V_I]=
dd^c b_I\wedge [V_I'].
\end{equation*}

We claim that $|D_{k+1}|\cap |V_I'|$ has positive codimension in
$|V_I'|$. To see this let $V_I^j$ be an irreducible component of
$|V_I'|$. Then either $V_I^j\subset |D_{k+1}|$ or $|D_{k+1}|\cap V_I^j$ has
positive codimension in $V_I^j$. However $V_I^j$ cannot be contained
in $|D_{k+1}|$ since $\widetilde U_{k+1}\cap V_I^j\subset (\widetilde
X\setminus |D_{k+1}|)\cap V_I^j$ is dense in $V_I^j$; this
proves the claim. 

Since $\codim (|V_I'|\cap |D_{k+1}|)>\codim |V_I'|$, 
by Proposition
~\ref{propB},  
$\pi^*u_{k+1}
dd^c b_I \wedge [V_I']$
has locally finite mass and by Proposition ~\ref{addis}, 
\[
\pi^* u_{k+1}^{(\iota)} dd^cb_I 
\wedge [V_I'] \to 
\pi^* u_{k+1} dd^c b_I 
\wedge [V_I'] 
\]
if $u_{k+1}^{(\iota)}$ is any  sequence of psh
functions decreasing to $u_{k+1}$. 
If $u_{k+1}^{(\iota)}$ are smooth, using \eqref{daghem}, that $\pi^{-1}
U_{k+1}=\widetilde U_{k+1}$, and \eqref{bla}, we get 
\[
u_{k+1}^{(\iota)} \1_{U_{k+1}}dd^c u_k\1_{U_k}\wedge
\cdots \wedge dd^c u_1\1_{U_1}
=
\pi_* \big (\pi^* u_{k+1}^{(\iota)}  \sum_I  dd^c b_I 
\wedge [V_I'] \big ). 
\]
Proposition ~\ref{addis} then yields
\begin{equation}\label{flyga}
u_{k+1} \1_{U_{k+1}} dd^c u_k\1_{U_k}\wedge
\cdots \wedge dd^c u_1 \1_{U_1}
=
\pi_* \big (\pi^* u_{k+1} \sum_I  dd^c b_I 
\wedge [V_I'] \big ). 
\end{equation}
Clearly the right hand side has locally finite mass, and thus (i) is verified.

We now consider (ii). 
Since the local representation $\pi^*F_j=f_jf_j'$ is determined up to
multiplication by a pluriharmonic factor, it follows, e.g., from Proposition ~\ref{addis} that 
\begin{equation}\label{mayo}
dd^c \pi^*u_{k+1} \wedge 
dd^c b_I \wedge [V_I'] 
=
dd^c \log|f_{k+1}|^2
\wedge dd^c b_I 
\wedge [V_I'] 
+
dd^c b_{k+1} \wedge dd^c b_I 
\wedge [V_I'], 
\end{equation}
cf.\ the proof of Proposition ~4.1 in \cite{AW}. The second term is
clearly of the desired form. 
Since $D_{k+1}$ and $V_I'$ intersect properly, $[D_{k+1}]\wedge[V_I']$
is the current of integration of a cycle $V$ of pure codimension
$k-s+1$, cf.\ \cite[Chapter~III, Proposition~4.12]{Dem}. 
Thus 
\begin{equation}\label{mittag}
dd^c \log|f_{k+1}|^2
\wedge dd^cb_I 
\wedge [V_I'] 
=[D_{k+1}] \wedge dd^c b_I \wedge [V_I'] 
= dd^c b_I \wedge [V]. 
\end{equation}
This proves (ii). 
\end{proof}

\begin{prop}\label{samma} 
Let $u_1,\ldots, u_m$ be psh functions with analytic singularities with
corresponding unbounded loci $Z_1,\ldots, Z_m$. 
Assume that 
\begin{equation}\label{snubbe}
\codim(Z_{i_1}\cap \cdots \cap Z_{i_\ell})\geq \ell,
\end{equation}
for each choice of $1\leq i_1 <\cdots < i_\ell\leq m$. 
Then
\begin{equation}\label{vett}
dd^c u_m\1_{X\setminus Z_m} \wedge \cdots \wedge dd^c
u_1 \1_{X\setminus Z_1}= 
dd^c u_m \wedge \cdots \wedge dd^c u_1,
\end{equation}
where the right hand side is defined in the sense of
Bedford-Taylor-Demailly. 
\end{prop}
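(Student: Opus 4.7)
The plan is to induct on $m$. The base case $m=1$ is immediate: since $Z_1 \subset X$ is analytic and hence has Lebesgue measure zero, $u_1 = u_1 \1_{X \setminus Z_1}$ in $L^1_{\loc}$, so $dd^c u_1 = dd^c(u_1 \1_{X \setminus Z_1})$.

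For the inductive step, since \eqref{snubbe} restricts to any sub-collection, the inductive hypothesis identifies
\[
S := dd^c u_{m-1} \1_{X\setminus Z_{m-1}} \wedge \cdots \wedge dd^c u_1 \1_{X\setminus Z_1}
\]
with the Bedford-Taylor-Demailly product $dd^c u_{m-1} \wedge \cdots \wedge dd^c u_1$, which is well-defined by Proposition~\ref{propB}. Choose smooth psh functions $u_m^{(\iota)}$ decreasing to $u_m$. By Proposition~\ref{liseberg}, the left-hand side of \eqref{vett} equals $dd^c \lim_\iota(u_m^{(\iota)} \1_{X \setminus Z_m} S)$, while by Proposition~\ref{addis} the BTD right-hand side equals $dd^c \lim_\iota(u_m^{(\iota)} S)$. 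Since each $u_m^{(\iota)}$ is smooth, these two approximants differ by $u_m^{(\iota)} \1_{Z_m} S$, so the identity \eqref{vett} reduces to showing $\1_{Z_m} S = 0$.

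To prove $\1_{Z_m} S = 0$, I would use the resolution argument from the proof of Proposition~\ref{liseberg}. Choose a smooth modification $\pi: \widetilde X \to X$ which principalizes $(F_1 \cdots F_m)$ and is refined enough that on $\widetilde X$ all intersections of the divisors $D_j = \{f_j = 0\}$ (for $1 \leq j \leq m$) are proper. Then $\pi^* u_j = \log|f_j|^2 + b_j$ with $b_j$ locally bounded psh, and
\[
S = \pi_* \sum_{I \subset \{1,\ldots,m-1\}} dd^c b_I \wedge [V_I],
\]
with each $V_I$ of pure codimension $m-1-|I|$ and supported in $\bigcap_{k \notin I} |D_k|$. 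If a component $V_I^j$ of $V_I$ were contained in $|D_m|$, it would sit in $\bigcap_{k \in I^c \cup \{m\}} |D_k|$, a subvariety of $\widetilde X$ of codimension $m-|I|$ by properness, strictly greater than $\codim V_I^j = m-1-|I|$ — a contradiction. Hence no component of $V_I$ lies in $|D_m|$, so Remark~\ref{dimen} gives $\1_{|D_m|} dd^c b_I \wedge [V_I] = 0$ for each $I$; passing to the pushforward via \eqref{bla} then yields $\1_{Z_m} S = 0$.

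The main technical obstacle is arranging the modification $\pi$ so that every intersection of the $|D_j|$'s is proper on $\widetilde X$. The hypothesis \eqref{snubbe} controls codimensions downstairs, but exceptional divisors can lie in several $\pi^{-1} Z_j$ simultaneously, so a naive log resolution of $F_1 \cdots F_m$ need not produce proper intersections upstairs. The fix is either to refine $\pi$ further to separate shared exceptional components, or — more in the spirit of the proof of Proposition~\ref{liseberg} — to construct the cycles $V_I$ stage-by-stage, retaining only components with proper intersection against subsequent $D_{k+1}$'s and then checking that any remaining exceptional components contribute zero after $\pi_*$ by a codimension-of-image argument (since $\pi(V_I^j) \subset Z_m \cap \bigcap_{k \in I^c} Z_k$ has codimension $\geq m-|I|$, the dimension principle handles the case $|I|=0$, and one treats $|I| \geq 1$ by further exploiting that $V_I^j$ must lie in the exceptional locus).
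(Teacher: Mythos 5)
Your reduction of \eqref{vett} to showing $\1_{Z_m}S=0$, where $S = dd^c u_{m-1}\wedge\cdots\wedge dd^c u_1$ is the Bedford--Taylor--Demailly product, is valid, and the base case is fine. But the resolution argument you offer for $\1_{Z_m}S=0$ has the gap you yourself flag at the end: after a log resolution of $F_1\cdots F_m$ the divisors $|D_j|$ typically share exceptional components and so do not intersect properly on $\widetilde X$, and your first proposed fix --- refining $\pi$ to separate the shared exceptional divisors --- is not actually available (two divisors sharing an irreducible component cannot be made to intersect properly by further blowing up), while your second fix is left only as a sketch. As written the argument does not close.

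The claim $\1_{Z_m}S=0$ is in fact true, and there is a one-line proof that avoids resolution entirely: Proposition~\ref{propB} guarantees that $u_m\,dd^c u_{m-1}\wedge\cdots\wedge dd^c u_1 = u_m S$ has locally finite mass, i.e.\ $u_m$ is locally integrable against the trace measure $\sigma_S$; since $u_m\equiv -\infty$ on $Z_m$ this forces $\sigma_S(Z_m)=0$, and a positive current whose trace measure vanishes on $Z_m$ satisfies $\1_{Z_m}S=0$. The paper's own proof takes a different route that bypasses the claim altogether: exploiting that $u_m$ has analytic singularities, it regularizes $\max(u_m,-N)$ to produce a decreasing sequence $u_m^{(\iota)}$ of smooth psh approximants each of which is \emph{constant} in a neighborhood of $Z_m$. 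Then $dd^c u_m^{(\iota)}$ vanishes near $Z_m$, so $dd^c u_m^{(\iota)}\wedge\1_{X\setminus Z_m}S=dd^c u_m^{(\iota)}\wedge S$ identically, and both sides of \eqref{vett} are recovered as $\lim_\iota dd^c u_m^{(\iota)}\wedge S$ via Propositions~\ref{liseberg} and~\ref{addis}; no resolution of singularities and no separate analysis of $\1_{Z_m}S$ are needed.
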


\begin{proof}
    The statement is local, so it is enough to assume that $X$ is a fixed relatively compact coordinate neighborhood of any given point.
    We let $u^N_m := \max(u_m,-N)$, which is psh and decreases pointwise to $u_m$ when $N \to \infty$.
    Since $u_m$ has analytic singularities, $u_m^N \equiv -N$ in some neighborhood of $Z_m$.
    We let $u^{N,\varepsilon}_m$ be obtained from $u^N_m$ through convolution with an approximate identity, so that
    $u^{N,\varepsilon}_m$ is smooth, psh and decreases pointwise to $u^N_m$ when $\varepsilon \to 0$. Since $u^N_m \equiv -N$ in a neighborhood
    of $Z_m$, it follows that $u^{N,\varepsilon}_m \equiv -N$ in some smaller neighborhood of $Z_m$ when $\varepsilon$ is small enough.
    We can thus find a sequence $u^{(\iota)}_m$ of smooth psh functions decreasing pointwise to $u_m$ such that 
    each $u^{(\iota)}_m$ is constant in some neighborhood of $Z_m$.

    We then get that
    \begin{multline*}
        dd^c u_m\1_{X\setminus Z_m} \wedge (dd^c u_{m-1} \wedge \cdots \wedge dd^c u_1)=
        \lim_\iota dd^c u_m^{(\iota)} \wedge \1_{X\setminus Z_m} (dd^c u_{m-1} \wedge \cdots \wedge dd^c u_1)= \\
        \lim_\iota dd^c u_m^{(\iota)} \wedge dd^c u_{m-1} \wedge \cdots \wedge dd^c u_1=
        dd^c u_m \wedge dd^c u_{m-1} \wedge \cdots \wedge dd^c u_1,
    \end{multline*}
    where the first equality follows from the definition of the first current, the second equality
    follows since $u^{(\iota)}_m$ is smooth and constant in a neighborhood of $Z_m$,
    and the last equality follows from Proposition \ref{addis} because of \eqref{snubbe}.
    The proposition then follows by induction over $m$.
\end{proof}

Recall that a function $q$ is \emph{quasiplurisubharmonic (qpsh)} if
it is of the form $q=u+a$, where $u$ is psh and $a$ is smooth. We say
that the qpsh function $q$ has \emph{analytic singularities} if $u$ has
analytic singularities. 
The \emph{unbounded locus}
of $q$ is defined as the unbounded locus of $u$.

\begin{lma}\label{paron}
Let $T$ be a current with analytic singularities, let 
$q$ be a qpsh function with analytic singularities with unbounded
locus 
$Z$, and let $U\subset X\setminus Z$ be a constructible set. 
Then,
\[
q  \1_U T := u  \1_U T+ a \1_U T
\]
is independent of the decomposition $q=u+a$, where $u$ is psh and $a$
is smooth. 
\end{lma}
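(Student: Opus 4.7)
The plan is to establish the equivalent statement that for two decompositions $q = u_1 + a_1 = u_2 + a_2$ with $u_j$ psh and $a_j$ smooth,
\[
u_1 \1_U T - u_2 \1_U T = (a_2 - a_1) \1_U T.
\]
First I note that since $q$ has analytic singularities with unbounded locus $Z$ and each $a_j$ is smooth, both $u_1$ and $u_2$ are psh with analytic singularities and unbounded locus $Z$, so each product $u_j \1_U T$ is a well-defined closed positive current by Remark \ref{stress}. Moreover, $g := u_1 - u_2 = a_2 - a_1$ is a smooth function, so $g \1_U T$ is simply the classical product of a smooth function with the closed positive current $\1_U T$.

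The statement is local, so I work on a relatively compact coordinate neighborhood and regularize by convolution: set $u_j^\varepsilon := u_j * \theta_\varepsilon$ for a standard approximate identity $\theta_\varepsilon$. Then each $u_j^\varepsilon$ is smooth and psh and decreases pointwise to $u_j$ as $\varepsilon \to 0$, so by Proposition \ref{liseberg} (applied to $T$ via Remark \ref{stress}) we have $u_j^\varepsilon \1_U T \to u_j \1_U T$ weakly. Subtracting,
\[
u_1^\varepsilon \1_U T - u_2^\varepsilon \1_U T \longrightarrow u_1 \1_U T - u_2 \1_U T.
\]

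The crucial point is that because each $u_j^\varepsilon$ is smooth, the left-hand side above equals $(u_1^\varepsilon - u_2^\varepsilon) \1_U T$, and convolution commutes with subtraction, giving $u_1^\varepsilon - u_2^\varepsilon = g * \theta_\varepsilon$. Since $g$ is smooth (hence continuous), $g * \theta_\varepsilon \to g$ uniformly on compact subsets, and because $\1_U T$ is a current of order zero, this uniform convergence implies $(g * \theta_\varepsilon) \1_U T \to g \1_U T$ weakly. Comparing the two limits yields $u_1 \1_U T - u_2 \1_U T = g \1_U T = (a_2 - a_1) \1_U T$, which is the desired identity. The main conceptual hurdle is that the two monotone sequences approximating $u_1$ and $u_2$ are \emph{a priori} independent, so a generic choice gives no control on the difference; convolution regularization is used precisely because it makes $u_1^\varepsilon - u_2^\varepsilon$ collapse to a mollification of the smooth function $g$, which then converges uniformly.
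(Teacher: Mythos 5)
Your proof is correct and follows essentially the same approach as the paper. Both arguments hinge on the same key move: mollifying $u_1$ and $u_2$ by convolution with a \emph{single} approximate identity, so that by linearity of convolution the difference $u_1^{\varepsilon} - u_2^{\varepsilon}$ is forced to be the mollification of the fixed smooth function $g = u_1 - u_2 = a_2 - a_1$ (equivalently, in the paper's phrasing, $u_1^{(\iota)} + a_1^{(\iota)} = u_2^{(\iota)} + a_2^{(\iota)}$ holds identically), after which Proposition~\ref{liseberg} and uniform convergence of the smooth part finish the job.
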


\begin{proof} Let $q=u_1+a_1=u_2+a_2$ be two decompositions of $q$
  such that $u_j$ are psh and $a_j$ are smooth.
The statement is local. Therefore we may approximate
$q$ by convoluting with a sequence of regularizing kernels
$\rho^{(\iota)}$ so that for $j=1,2$,
$u_j^{(\iota)}:=u_j\ast\rho^{(\iota)}$ is a sequence of smooth psh
functions decreasing to $u_j$, and
$a_j^{(\iota)}:=a_j\ast\rho^{(\iota)}$ is a sequence of smooth
functions converging to $a_j$, and for each $\iota$,
$u^{(\iota)}_1+a^{(\iota)}_1=u^{(\iota)}_2+a^{(\iota)}_2$.  
Thus in light of Proposition ~\ref{liseberg} we get
\begin{equation*}
u_1\1_U  T+a_1 \1_U T=
\lim_{\iota} \big (u^{(\iota)}_1 \1_U T + a^{(\iota)}_1 \1_U T \big )
=
\lim_{\iota}\big (u^{(\iota)}_2 \1_U T + a^{(\iota)}_2 \1_U T\big )
=
u_2 \1_U T+a_2 \1_U T.
\end{equation*}
\end{proof}

Let $u$ be a psh function with analytic singularities with unbounded
locus $Z$, let $\alpha$ be a closed smooth $(1,1)$-form, and 
let $T$ be a current (locally) of the form 
\begin{equation}\label{hjartat}
T=\sum \beta_i \wedge T_i,
\end{equation}
where the sum is finite, $\beta_i$ are closed smooth forms, and $T_i$ are currents with analytic
singularities. 
We define the operator
$T\mapsto [dd^c u]_\alpha\wedge T$ by 
\begin{equation*}
[dd^c u]_\alpha\wedge T := dd^c u\wedge \1_{X\setminus Z} T +\alpha
\wedge \1_Z T
:= dd^c (u \1_{X\setminus Z} T) +\alpha \wedge \1_Z T.
\end{equation*}
By Remark ~\ref{stress} this is a well-defined 
current of the form \eqref{hjartat}. 
Using that $\1_{X\setminus Z}(\alpha^\ell\wedge \1_Z (dd^c u)^k)=0$ for $k,\ell\geq 0$, we get that 
\begin{equation}\label{bullen}
[dd^c u]^m_\alpha=[dd^c u]_\alpha\wedge [dd^c u]^{m-1}_\alpha, 
\end{equation}
where $[dd^c u]^m_\alpha$ is defined by \eqref{geten}. 

Next, for currents $T$ of the form \eqref{hjartat}
we define operators $T\mapsto [dd^c u]_\alpha^m\wedge T$
recursively by  
\begin{equation}\label{triangel}
 [dd^c u]_\alpha^k\wedge T:= [dd^c u]_\alpha\wedge [dd^c u]_\alpha^{k-1}\wedge T.
\end{equation}
Again by Remark ~\ref{stress} these are currents of the form
\eqref{hjartat}. In particular, if $u_1,\ldots, u_t$ are psh functions with analytic
singularities, and $\alpha_1,\ldots, \alpha_t$ are closed smooth 
$(1,1)$-forms, the current 
\begin{equation*}
[dd^c u_t]^{m_t}_{\alpha_t}\wedge \cdots \wedge [dd^c u_1]^{m_1}_{\alpha_1}
\end{equation*}
is a well-defined current of the form \eqref{hjartat}. 

Note that if $\beta$ is a closed smooth form, then 
\begin{equation*}
 [dd^c u]_\alpha^k\wedge \beta\wedge T = 
\beta \wedge [dd^c u]_\alpha^k\wedge T. 
\end{equation*}
Indeed, multiplication with $\1_U$ and $dd^c u$ commutes with multiplication with
closed smooth forms.

\begin{remark}\label{massformeln} 
Recall that if $(X,\omega)$ is a 
K\"ahler manifold, then a function 
$\phi\colon X\rightarrow\R\cup\{-\infty\}$ is called {\it
$\omega$-plurisubharmonic ($\omega$-psh)} if locally the function $g+\phi$
is psh, where $g$ is a local potential for $\omega$, i.e.,  
$\omega=dd^cg$. Moreover $\phi$ is said to have {\it analytic singularities} if 
the functions $g+\phi$ have analytic singularities. 
If $\phi$ is an $\omega$-psh function with analytic singularities, 
we can define a global positive current $(\omega + dd^c \phi)^k$, by
locally defining it as $(dd^c (g+\phi))^k$, see
\cite[Lemma~5.1]{ABW}. 
Analogously to \eqref{geten} we can define 
\begin{equation*}
[\omega + dd^c \phi]^m_\omega:=(\omega + dd^c \phi)^m+\sum_{k=0}^{m-1}
\omega^{m-k}\wedge \1_Z (\omega + dd^c\phi)^k, 
\end{equation*}
where $Z$ is the unbounded locus of $\phi$, cf.\ \cite{ABW}. With this
notation, Theorem~1.2 in \cite{ABW} can be formulated as:

\noindent
\it{Let $\phi$ be an $\omega$-psh function with analytic
  singularities on a compact K\"ahler manifold
  $(X,\omega)$ of dimension $n$. Then 
\begin{equation*}\label{north}
\int_X[\omega+dd^c\phi]_\omega^n
      =\int_X\omega^n.
\end{equation*}
}
\end{remark}

\section{Hermitian metrics with analytic singularities on line bundles}\label{linjen}

Let $L\to X$ be a holomorphic line bundle. A \emph{singular hermitian metric} on
$L$, as introduced by Demailly \cite{Dem2},
consists of (possibly infinite) seminorms $\|\cdot\|_{h(x)}$ on $L_x$ for all $x\in X$
such that if $\vartheta: L|_{\mathcal U} \to \mathcal U\times \C $
is a local trivialization of $L$ and $\xi$ is a local section, then 
$\|\xi\|^2_h=|\vartheta (\xi)|^2e^{-\varphi}$, where $\varphi$ is a locally
integrable function in $\U$ called the \emph{(local) weight} of $h$
with respect to $\vartheta$, see, e.g., \cite[Chapter~9.4.D]{Laz2}. 
The metric $h$ is often denoted by $e^{-\varphi}$ or just $\varphi$. 
If $\vartheta': L|_{\mathcal U'} \to \mathcal U'\times \C$ is another
local trivialization, with transition function $g$, then in
$\U\cap\U'$ the corresponding weight $\varphi'$ satisfies
\begin{equation}\label{hostelihost}
\varphi'=\varphi+\log |g|^2.
\end{equation}
From \eqref{hostelihost} and the local integrability of the weights it follows that
the curvature current 
$\Theta_h=\partial\dbar \varphi$ is a well-defined global current on
$X$. The \emph{Chern form} 
$c_1(L,h)=(i/2\pi)\Theta_h =dd^c\varphi$ represents the Chern
class $c_1(L)$.

The metric $e^{-\varphi}$ is \emph{positive} if the weights $\varphi$ are
psh. We say that a positive singular metric $e^{-\varphi}$ has \emph{analytic
  singularities} if the weights $\varphi$ have analytic singularities. 
In view of \eqref{hostelihost} there is a well-defined associated
\emph{unbounded locus} $Z\subset X$, that is a subvariety of $X$, locally defined as the unbounded
loci 
of the $\varphi$. 
Since the local weights are integrable it follows that $Z$ has positive codimension in $X$.

\begin{ex}\label{fagel}
Assume that $s_1,\ldots, s_N$ are nontrivial holomorphic sections of a line bundle 
$L\to X$. Then $h=e^{-\varphi}$ with 
\[
\varphi=\log \sum_{j=1}^N |s_j|^2
\]
is a positive metric with analytic singularities, cf.\
\cite[Example~2.4]{Dem2}. 
In other words, if $\vartheta: L|_{\mathcal U} \to \mathcal U\times
\C$ is a local trivialization and $\xi$ is a local section, then 
\[
\|\xi\|^2_h=\frac{|\vartheta(\xi)|^2}{\sum_{j=1}^N|\vartheta(s_j)|^2}.
\]
\end{ex}

\begin{lma}\label{filmfestival}
Assume that $\varphi_1,\ldots, \varphi_t$ are positive metrics with
analytic singularities on line bundles 
$L_1,\ldots, L_t$ over $X$ with unbounded loci $Z_1,\ldots,
Z_t$, respectively. 
If $U_1,\ldots, U_t$ are
constructible sets contained in $X\setminus Z_1,\ldots, X\setminus
Z_t$, respectively, and $\theta_1,\ldots, \theta_t$ are closed 
$(1,1)$-forms, then the a priori locally defined currents 
\begin{equation}\label{matta}
dd^c\varphi_t\1_{U_t}\wedge\cdots\wedge dd^c\varphi_1\1_{U_1}, 
\end{equation}
\begin{equation}\label{satta}
[dd^c\varphi_t]^{m_t}_{\theta_t}\wedge \cdots \wedge [dd^c\varphi_1]^{m_1}_{\theta_1}
\end{equation}
are globally defined currents;
\eqref{matta} has analytic singularities and \eqref{satta} is of the
form \eqref{hjartat}. 
\end{lma}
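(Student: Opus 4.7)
My plan is to reduce global well-definedness to invariance under a change of local trivialization. By \eqref{hostelihost}, two local representatives $\varphi_j$ and $\varphi_j'$ of the singular metric on $L_j$ over an overlap of trivializations differ by $h_j := \log|g_j|^2$, where $g_j$ is a nonvanishing holomorphic transition function; in particular $h_j$ is smooth and pluriharmonic, so $dd^c h_j = 0$. Given this, the whole argument rests on the Leibniz identity $dd^c(hS) = dd^c h \wedge S$ for smooth $h$ and closed $S$, which in our situation yields $dd^c(h_j S) = 0$ whenever $S$ is closed.

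For \eqref{matta} I would argue by induction on the number of factors. Suppose that $T_{k-1} := dd^c\varphi_{k-1}\1_{U_{k-1}}\wedge\cdots\wedge dd^c\varphi_1\1_{U_1}$ has already been exhibited as a globally defined current; by Proposition \ref{liseberg} it is closed and positive. Since $U_k$ is constructible, $\1_{U_k} T_{k-1}$ is still closed by Skoda--El Mir (cf.\ Section \ref{mma}). Replacing $\varphi_k$ by $\varphi_k + h_k$ changes $\varphi_k \1_{U_k} T_{k-1}$ by $h_k \1_{U_k} T_{k-1}$, and the Leibniz identity together with $dd^c h_k = 0$ shows that this contributes nothing after applying $dd^c$. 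Hence $T_k := dd^c(\varphi_k \1_{U_k} T_{k-1})$ is independent of the chosen local representative for $\varphi_k$, closing the induction and exhibiting $T_k$ globally. That $T_k$ is a current with analytic singularities is immediate from the definition.

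For \eqref{satta} I would run the same argument against the recursive description
\[
[dd^c\varphi_j]_{\theta_j}\wedge T = dd^c(\varphi_j \1_{X\setminus Z_j} T) + \theta_j \wedge \1_{Z_j} T,
\]
using that $\theta_j$, $Z_j$, and any previously constructed current of the form \eqref{hjartat} are globally defined, and that the latter is closed (each summand $\beta_i \wedge T_i$ being a closed smooth form wedged with a closed current with analytic singularities). Again the only local ingredient is the weight $\varphi_j$, and the same pluriharmonic cancellation shows that the first summand is globally defined. Iterating via \eqref{triangel} and inducting on $m_1 + \cdots + m_t$ gives global well-definedness of \eqref{satta}, and its form \eqref{hjartat} is transparent from the construction. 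The one step that I expect to require care is the Leibniz identity $dd^c(h_j S) = dd^c h_j \wedge S$, which relies on $S$ being annihilated by both $d$ and $d^c$; this holds for the closed positive, and more generally the closed normal, currents produced at each stage of the induction, and once it is in place the rest of the proof is a clean gluing argument.
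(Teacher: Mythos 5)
Your argument is essentially the paper's: two local weights differ by the pluriharmonic function $\log|g_j|^2$, and the Leibniz/closedness computation shows the extra contribution vanishes after applying $dd^c$. The one step you treat as automatic — that $(\varphi_j+h_j)\1_{U_j}T = \varphi_j\1_{U_j}T + h_j\1_{U_j}T$ when the left side is defined via the limiting procedure of Proposition \ref{liseberg} — is precisely the content of Lemma \ref{paron}, which the paper invokes here; it does require the (short) observation that $\varphi_j^{(\iota)}+h_j$ is a valid decreasing smooth psh approximation of $\varphi_j+h_j$, so the additivity passes through the limit by the independence-of-sequence part of Proposition \ref{liseberg}.
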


\begin{proof}
Since the local weights of $\varphi_j$ are psh functions with analytic
singularities, \eqref{matta} and \eqref{satta} are locally well-defined
and of the desired form in view of Section \ref{general}. Since two
local weights differ by a pluriharmonic function, cf.\
\eqref{hostelihost}, it follows using Lemma \ref{paron} that
they are globally defined. 
\end{proof}

If $\varphi_j=\varphi$, with singular set $Z$, and $U_j=X\setminus
Z$ for all $j$, we write $(dd^c \varphi)^t$ 
for the generalized Monge-Amp\`ere product \eqref{matta}, cf.\ \eqref{defen}.

For the proofs of Theorems \ref{thmA} and \ref{thmB} we will need the
following results.

\begin{prop}\label{dova}
Let $\varphi_1,\ldots, \varphi_t$ be hermitian metrics with analytic
singularities on holomorphic line bundles $L_1,\ldots, L_t$, respectively, over a
complex manifold $X$. Moreover, let $\theta_1,\ldots, \theta_t$ be
first Chern forms of smooth metrics on
$L_1,\ldots, L_t$, respectively. Then 
\begin{equation}\label{rova}
[dd^c\varphi_t]^{m_t}_{\theta_t}\wedge\cdots\wedge
[dd^c\varphi_1]^{m_1}_{\theta_1} 
=
\theta_t^{m_t}\wedge\cdots\wedge \theta_1^{m_1} + dd^c S,
\end{equation}
where $S$ is a current on $X$. 
\end{prop}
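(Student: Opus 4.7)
The plan is to reduce the proposition to a single ``base identity'' for one application of the operator $[dd^c\varphi]_\theta$ and then iterate by induction on the total exponent $M := m_1+\cdots+m_t$.

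First, the setup. Each $\theta_j$ is locally of the form $dd^c\psi_j$, where $\psi_j$ is a local weight of the given smooth metric on $L_j$, and since $\varphi_j$ and $\psi_j$ transform identically under change of trivialization, cf.\ \eqref{hostelihost}, the difference $\rho_j := \varphi_j - \psi_j$ is a globally defined qpsh function on $X$ with analytic singularities and unbounded locus $Z_j$. The base identity I would establish is: for every closed current $T$ of the form \eqref{hjartat},
\begin{equation*}
[dd^c\varphi_j]_{\theta_j}\wedge T \;=\; \theta_j\wedge T \;+\; dd^c\bigl(\rho_j\,\1_{X\setminus Z_j}T\bigr).
\end{equation*}
Starting from $[dd^c\varphi_j]_{\theta_j}\wedge T = dd^c(\varphi_j\,\1_{X\setminus Z_j}T) + \theta_j\wedge\1_{Z_j}T$ and using that $\psi_j$ is smooth and $\1_{X\setminus Z_j}T$ is closed (by Skoda--El Mir together with Proposition~\ref{liseberg}), one has locally $\theta_j\wedge T = dd^c(\psi_j\,\1_{X\setminus Z_j}T) + \theta_j\wedge\1_{Z_j}T$, so subtracting yields $dd^c((\varphi_j-\psi_j)\,\1_{X\setminus Z_j}T)$ locally, and global consistency is then handled by Lemma~\ref{paron} applied summand-wise in the representation \eqref{hjartat}.

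With the base identity in hand, I would induct on $M$. The case $M=0$ is trivial. For the step, after dropping any factors with $m_j=0$, I peel off the outermost block using \eqref{triangel}: writing
\begin{equation*}
A \;:=\; [dd^c\varphi_t]^{m_t}_{\theta_t}\wedge\cdots\wedge[dd^c\varphi_1]^{m_1}_{\theta_1} \;=\; [dd^c\varphi_t]_{\theta_t}\wedge A',
\end{equation*}
where $A' := [dd^c\varphi_t]^{m_t-1}_{\theta_t}\wedge[dd^c\varphi_{t-1}]^{m_{t-1}}_{\theta_{t-1}}\wedge\cdots\wedge[dd^c\varphi_1]^{m_1}_{\theta_1}$ has total exponent $M-1$ and is a closed current of the form \eqref{hjartat} (both closedness and form \eqref{hjartat} are preserved by each operator $[dd^c\varphi_j]^{m_j}_{\theta_j}$, by Section~\ref{general} and Lemma~\ref{filmfestival}). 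The induction hypothesis gives $A' = B' + dd^c S'$ with $B' := \theta_t^{m_t-1}\wedge\theta_{t-1}^{m_{t-1}}\wedge\cdots\wedge\theta_1^{m_1}$. Applying the base identity to $A'$ and using that $\theta_t$ is closed smooth, so $\theta_t\wedge dd^c S' = dd^c(\theta_t\wedge S')$,
\begin{equation*}
A \;=\; \theta_t\wedge A' + dd^c\bigl(\rho_t\,\1_{X\setminus Z_t}A'\bigr) \;=\; \theta_t^{m_t}\wedge\cdots\wedge\theta_1^{m_1} + dd^c\bigl(\theta_t\wedge S' + \rho_t\,\1_{X\setminus Z_t}A'\bigr),
\end{equation*}
which closes the induction with $S := \theta_t\wedge S' + \rho_t\,\1_{X\setminus Z_t}A'$.

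The main obstacle I expect is the base identity itself, specifically the interplay between the only locally defined smooth weights $\psi_j$ and the globally defined correction $\rho_j$, together with verifying that $\rho_t\,\1_{X\setminus Z_t}A'$ is a globally well-defined current of the form \eqref{hjartat}; both issues are handled by Lemma~\ref{paron} applied summand-wise and the framework of Section~\ref{general}, so no new singular-intersection estimates are required once this setup is in place.
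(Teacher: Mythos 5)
Your proof is correct and essentially coincides with the paper's: both hinge on the same ``peel-off'' identity $[dd^c\varphi_j]_{\theta_j}\wedge T = \theta_j\wedge T + dd^c\bigl((\varphi_j-\psi_j)\,\1_{X\setminus Z_j}T\bigr)$ (the paper's \eqref{snut}), obtained locally from $dd^c(\psi_j\,\1_{X\setminus Z_j}T)=\theta_j\wedge\1_{X\setminus Z_j}T$ and globalized by Lemma~\ref{paron}. The paper's reduction to $m_j=1$ via \eqref{triangel} followed by induction on $t$ is the same bookkeeping as your induction on $M=m_1+\cdots+m_t$.
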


\begin{proof}
In view of \eqref{triangel}
{it is enough to prove the result for $m_j=1$,} $j=1,\ldots, t$.
Assume that $\theta_j$ is the first Chern form of the smooth metric $\psi_j$. 
Then note that $\varphi_j-\psi_j$ defines a global
qpsh function on $X$ for each $j$, cf.\ \eqref{hostelihost}.

Let $T$ be a current of the form \eqref{hjartat}. Then 
\begin{multline}\label{snut}
[dd^c\varphi_j]_{\theta_j}\wedge T 
=
dd^c(\varphi_j\1_{X\setminus Z_j} T)+\theta_j\wedge \1_{Z_j} T
=\\
dd^c(\varphi_j\1_{X\setminus Z_j} T) - \theta_j\wedge \1_{X\setminus
  Z_j} T + \theta_j\wedge T
=
dd^c\big ( (\varphi_j- \psi_j) \1_{X\setminus Z_j}
T\big )  + \theta_j\wedge T, 
\end{multline}
where we have used that $dd^c \psi_j=\theta_j$, Lemma ~\ref{paron}, and that  $\1_{X\setminus Z_j}
T$ is closed for the last equality.

Assume that $t=1$. Then it follows from \eqref{snut} (applied to $T=1$
and $j=1$) 
that \eqref{rova} holds with $S=(\varphi_1-\psi_1)\1_{X\setminus
  Z_1}$. In fact, $S=\varphi_1-\psi_1$ since $Z_1$ has positive
codimension in $X$. 
Next assume that \eqref{rova} holds for $t=\kappa$. Then \eqref{snut}
(applied to $T=[dd^c\varphi_\kappa]_{\theta_\kappa}\wedge \cdots\wedge
[dd^c\varphi_1]_{\theta_1}$ and $j=\kappa+1$) gives
\begin{multline*}
[dd^c\varphi_{\kappa+1}]_{\theta_{\kappa+1}}\wedge \cdots\wedge
[dd^c\varphi_1]_{\theta_1} =
dd^c U
+ 
\theta_{\kappa+1} \wedge
[dd^c\varphi_{\kappa}]_{\theta_{\kappa}}\wedge \cdots\wedge
[dd^c\varphi_{1}]_{\theta_{\1}}
=\\
dd^c U+ 
\theta_{\kappa+1}\wedge \big (\theta_{\kappa} \wedge\cdots\wedge \theta_{1}  
+ dd^c S \big )
=
\theta_{\kappa+1}\wedge\cdots\wedge \theta_1 + dd^c
(U+ \theta_{\kappa+1}\wedge S), 
\end{multline*}
where 
\[
U=(\varphi_{\kappa+1}- \psi_{\kappa+1}) \1_{X\setminus Z_j}
[dd^c\varphi_{\kappa}]_{\theta_{\kappa}}\wedge \cdots\wedge
[dd^c\varphi_{1}]_{\theta_{\1}}. 
\]
Thus \eqref{rova} holds for $t=\kappa+1$ and the lemma follows by
induction. 
\end{proof}

\begin{lma}\label{medusa}
Let $\varphi$ be a positive metric with analytic
singularities on a holomorphic line bundle $L$ over a
complex manifold $X$, and let $\theta$ be the first Chern form of a smooth
metric on $L$. 
Let $\varphi_\varepsilon$ be a sequence of
smooth positive metrics decreasing to $\varphi$ 
and let $\omega_\epsilon$ be the corresponding first Chern
forms. Moreover, let $T$ be a current of the form \eqref{hjartat}, 
and let $\beta$ be a test form such that the support of
$dd^c\beta$ does not intersect the unbounded locus $Z$ of $\varphi$. 
Then 
\begin{equation}\label{skjuva}
\int_X [dd^c\varphi]^{m}_{\theta}\wedge T \wedge \beta
=
\lim_{\varepsilon\rightarrow 0}
\int_X \omega_\varepsilon^{m}\wedge 
T \wedge \beta.
\end{equation}
\end{lma}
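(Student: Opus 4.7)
The plan is to induct on $m$. The base case $m=0$ gives $\int_X T\wedge \beta$ on both sides. For the inductive step, the key tool is identity \eqref{snut} from the proof of Proposition~\ref{dova}: I would choose a local smooth weight $\psi$ for $\theta$ (so that $dd^c\psi = \theta$, making $\varphi-\psi$ a globally defined qpsh function with unbounded locus $Z$) and apply \eqref{snut} to the current $[dd^c\varphi]^{m-1}_\theta\wedge T$, which is of the form \eqref{hjartat}. This should give
\[
[dd^c\varphi]^m_\theta \wedge T = dd^c\bigl((\varphi - \psi)\1_{X\setminus Z}[dd^c\varphi]^{m-1}_\theta\wedge T\bigr) + \theta\wedge [dd^c\varphi]^{m-1}_\theta\wedge T,
\]
together with the analogous smooth identity $\omega_\varepsilon^m\wedge T = dd^c\bigl((\varphi_\varepsilon-\psi)\omega_\varepsilon^{m-1}\wedge T\bigr) + \theta\wedge \omega_\varepsilon^{m-1}\wedge T$, obtained from $\omega_\varepsilon = dd^c(\varphi_\varepsilon-\psi)+\theta$.

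Next, since each $T_i$ is closed positive by Proposition~\ref{liseberg} and each $\beta_i$ is closed smooth, $T$ itself is closed. I would use this to integrate by parts against $\beta$, converting the two identities above into
\begin{multline*}
\int_X [dd^c\varphi]^m_\theta \wedge T\wedge\beta = \int_X (\varphi-\psi)\1_{X\setminus Z}[dd^c\varphi]^{m-1}_\theta\wedge T\wedge dd^c\beta \\ + \int_X\theta\wedge[dd^c\varphi]^{m-1}_\theta\wedge T\wedge\beta,
\end{multline*}
and its smooth analogue. The second integral on the right then falls directly under the inductive hypothesis with $m-1$ and the test form $\tilde\beta:=\theta\wedge\beta$: since $\theta$ is closed, $dd^c\tilde\beta=\theta\wedge dd^c\beta$, so $\supp(dd^c\tilde\beta)\subset\supp(dd^c\beta)$ remains disjoint from $Z$.

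For the first integral I would exploit that $\supp(dd^c\beta)\subset X\setminus Z$ to localize to the open set where $\varphi$ is locally bounded, $\1_{X\setminus Z}=1$, and $[dd^c\varphi]^{m-1}_\theta$ coincides with the classical Bedford-Taylor product $(dd^c\varphi)^{m-1}$ (the correction terms in \eqref{geten} vanish off $Z$). Expanding $T=\sum\beta_i\wedge T_i$ and applying classical Bedford-Taylor convergence (Proposition~\ref{addis}) to each closed positive $T_i$ should yield
\[
(\varphi_\varepsilon-\psi)\omega_\varepsilon^{m-1}\wedge T \to (\varphi-\psi)(dd^c\varphi)^{m-1}\wedge T
\]
weakly on $X\setminus Z$, which suffices to conclude. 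The main subtlety is justifying this weak convergence given that $T$ may itself carry analytic singularities away from $Z$; however, this is precisely the classical Bedford-Taylor scenario, in which local boundedness of the psh potentials alone suffices for any closed positive factor $T$.
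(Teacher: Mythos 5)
Your proposal is correct, and the argument holds up, but it is organized differently from the paper's proof. The paper does not induct on $m$: instead it derives a telescoping identity
\begin{equation*}
[dd^c\varphi]^m_{\theta}\wedge T - \omega^m_\eps\wedge T
=
\sum_{\ell=1}^m \omega_\eps^{m-\ell} \wedge
\big ([dd^c\varphi]^\ell_{\theta}\wedge T - \omega_\eps \wedge
[dd^c\varphi]^{\ell-1}_{\theta}\wedge T \big ),
\end{equation*}
expands each summand using the identity $[dd^c\varphi]_\theta\wedge S - \omega_\eps\wedge S = dd^c\bigl((\varphi-\varphi_\eps)\1_{X\setminus Z}S\bigr) + dd^c\bigl((\psi-\varphi_\eps)\1_Z S\bigr)$ with $S=[dd^c\varphi]^{\ell-1}_{\theta}\wedge T$, and shows each exact term vanishes after pairing with $\beta$ and passing to the limit (the $\1_Z$ terms are exactly zero by the support assumption on $dd^c\beta$, the $\1_{X\setminus Z}$ terms tend to zero via Proposition~\ref{addis}). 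Your approach instead peels off one factor via \eqref{snut}, so that the difference $[dd^c\varphi]^m_\theta$ versus $\omega_\eps^m$ is never formed directly; the $\theta\wedge(\cdot)$ remainder is handled by the inductive hypothesis with the modified test form $\theta\wedge\beta$ (correctly observing that $dd^c(\theta\wedge\beta)=\theta\wedge dd^c\beta$ keeps the support away from $Z$), while the exact piece reduces to the same Bedford--Taylor convergence on $X\setminus Z$. Both routes rest on precisely the same two mechanisms --- Stokes plus the hypothesis $\supp dd^c\beta\cap Z=\emptyset$, and Proposition~\ref{addis} applied componentwise to $T=\sum\beta_i\wedge T_i$ off $Z$ --- so neither is more economical than the other; yours avoids re-deriving a telescoping formula at the cost of carrying an explicit induction, and it reuses the already-established identity \eqref{snut} from Proposition~\ref{dova} rather than the paper's purpose-built \eqref{diskho}. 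One small remark: your final step implicitly invokes Lemma~\ref{paron} to make sense of $(\varphi-\psi)\1_{X\setminus Z}[dd^c\varphi]^{m-1}_\theta\wedge T$ as a product with a qpsh weight, which is worth flagging, though it is not a gap.
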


\begin{proof}
Assume that $\theta$ is the first Chern form of the smooth metric
$\psi$. First note that for any current $S$ of the form \eqref{hjartat} 
\begin{multline}\label{diskho}
[dd^c\varphi]_{\theta}\wedge S - \omega_\eps\wedge S
=
dd^c(\varphi\1_{X\setminus Z} S)+\theta\wedge \1_{Z} S - \omega_\eps\wedge S
=\\
dd^c\big ((\varphi-\varphi_\eps) \1_{X\setminus Z} S\big ) 
+ 
dd^c\big ((\psi-\varphi_\eps) \1_{Z} S\big ). 
\end{multline}
Since $\varphi-\varphi_\eps$ and  $\psi-\varphi_\eps$ are
globally defined qpsh functions, cf.\ \eqref{hostelihost}, in view of Lemma ~\ref{paron}
the currents on the last line of \eqref{diskho} are globally defined
currents of the form \eqref{hjartat}. 

By applying \eqref{diskho} to
$S=[dd^c\varphi]_\theta^{\ell-1}\wedge T$ for $\ell=1,\ldots, m$, it follows that 
\begin{multline}\label{masta} 
[dd^c\varphi]^m_{\theta}\wedge T - \omega^m_\eps\wedge T
=
\sum_{\ell=1}^m \omega_\eps^{m-\ell} \wedge 
\big ([dd^c\varphi]^\ell_{\theta}\wedge T - \omega_\eps \wedge
[dd^c\varphi]^{\ell-1}_{\theta}\wedge T \big )
=\\
\sum_{\ell=1}^m \omega_\eps^{m-\ell} \wedge 
dd^c\big ((\varphi-\varphi_\eps) \1_{X\setminus Z}
[dd^c\varphi]^{\ell-1}_{\theta} \wedge T\big ) 
+ 
\sum_{\ell=1}^m \omega_\eps^{m-\ell} \wedge dd^c \big ( 
(\psi-\varphi_\eps) \1_{Z} [dd^c\varphi]^{\ell-1}_{\theta}
\wedge T\big ).  
\end{multline}
Again, the currents in the last line are well-defined global currents
by Lemma ~\ref{paron}.

Let us integrate one of the currents in the second sum against
$\beta$. Then by Stokes' theorem
\begin{multline*}
\int_X 
\omega_\eps^{m-\ell} \wedge dd^c \big ( 
(\psi-\varphi_\eps) \1_{Z} [dd^c\varphi]^{\ell-1}_{\theta}
\wedge T\big ) \wedge \beta
=\\
\int_X 
\omega_\eps^{m-\ell} \wedge 
(\psi-\varphi_\eps) \1_{Z} [dd^c\varphi]^{\ell-1}_{\theta}
\wedge T  \wedge dd^c \beta=0,  
\end{multline*} 
where the last equality follows since  $\supp (\1_{Z}
[dd^c\varphi]^{\ell-1}_{\theta} \wedge T)\subset Z$ 
is disjoint from  $\supp dd^c \beta$. 
Outside $Z$, the current 
\begin{equation*}\label{onsdag}
\omega_\eps^{m-\ell} \wedge 
(\varphi-\varphi_\eps) \1_{X\setminus Z}
[dd^c\varphi]^{\ell-1}_{\theta} \wedge T
=
(\varphi-\varphi_\eps) (dd^c\varphi_\eps)^{m-\ell}\wedge 
\1_{X\setminus Z} [dd^c\varphi]^{\ell-1}_{\theta} \wedge T
\end{equation*}
converges weakly to $0$ by Proposition ~\ref{addis}. 
Thus, integrating one of the terms in the first sum in the last line
of \eqref{masta} against $\beta$ and taking the limit gives 
\begin{multline*}
\lim_{\varepsilon\rightarrow 0} \int_X 
\omega_\eps^{m-\ell} \wedge dd^c \big ( 
(\varphi-\varphi_\eps) \1_{X\setminus Z} [dd^c\varphi]^{\ell-1}_{\theta}
\wedge T\big ) \wedge \beta
=\\
\lim_{\varepsilon\rightarrow 0}\int_X 
\omega_\eps^{m-\ell} \wedge 
(\varphi-\varphi_\eps) \1_{X\setminus Z} [dd^c\varphi]^{\ell-1}_{\theta}
\wedge T  \wedge dd^c \beta=0. 
\end{multline*}
Now \eqref{skjuva} follows by integrating \eqref{masta} against $\beta$ and
taking the limit. 
\end{proof}

\section{Hermitian metrics with analytic singularities on 
  vector bundles}\label{jensen}

Let $E\to X$ be a holomorphic vector bundle over a complex manifold $X$. 
A \emph{singular hermitian metric} $h$ on $E$ in the sense of
Berndtsson-P\u aun, \cite{BP}, is a measurable function from $X$ to the
space of nonnegative hermitian forms on the fibers. The hermitian
forms are allowed to take the value $\infty$ at some points in the
base (i.e., the norm function $\|\xi\|_h$ is a measurable function with
values in $[0,\infty]$), but for any fiber $E_x$ the subset
$E_0:=\{\xi\in E_x\ ;\ \|\xi\|_{h(x)}<\infty\}$ has to be a linear
subspace, and the restriction of the metric to this subspace must be
an ordinary hermitian form.

Every singular hermitian metric $h$ on $E$
induces a canonical dual singular hermitian metric $h^*$ on $E^*$ such
that $(h^{*})^*=h$ under the natural isomorphism $(E^{*})^*\cong E$,
see, e.g., \cite[Lemma~3.1]{LRRS}. 
Following \cite[Section~3]{BP} we say that $h$ is \emph{Griffiths
  negative} if the function\footnote{The function $\chi_h$ is sometimes called
  the \emph{logarithmic indicatrix} of the (Finsler) metric $h$, see,
  e.g., \cite{Dem99}.}  
\[
\chi_h(x,\xi):=\log \|\xi\|^2_{h(x)}
\] 
is psh on the total space of $E$. Moreover we say that $h$ is \emph{Griffiths positive} if the dual metric
$h^*$ on $E^*$ is negative.

\begin{prop}\label{blabar}
Let $h$ be a singular hermitian metric on a holomorphic vector bundle $E$. Let $0_E$ denote the zero section of $E$.  Then
the following conditions are equivalent. 
\begin{enumerate} 
\item\label{third}
$h$ is Griffiths negative, i.e., $\chi_h$ is psh on the total space of $E$,
\item\label{fourth}
$\chi_h$ is psh on $E\setminus 0_E$, 
\item\label{first}
the function $x\mapsto \log \|u(x)\|^2_{h(x)}$ is psh for each local
section $u$ of $E$,
\item\label{second}
$\log\|u\|^2_{h}$ is psh for each local
nonvanishing section $u$ of $E$.
\end{enumerate}
\end{prop}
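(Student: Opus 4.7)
The plan is to establish the four equivalences via a cycle $(1) \Rightarrow (2) \Rightarrow (4) \Rightarrow (3) \Rightarrow (1)$. The step $(1) \Rightarrow (2)$ is restriction to the open subset $E \setminus 0_E$, and $(2) \Rightarrow (4)$ follows by pulling back the psh function $\chi_h$ (now psh on $E \setminus 0_E$) under the holomorphic section $s_u(x) := (x, u(x))$, whose image lies in $E \setminus 0_E$ precisely when $u$ is nonvanishing.

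For $(4) \Rightarrow (3)$, fix a local section $u$ on an open $U$; we may assume $u \not\equiv 0$, so that $Z(u)$ is a proper analytic subset of $U$. By $(4)$, $\log\|u\|^2_h$ is psh on $U \setminus Z(u)$. I claim it is locally bounded above near $Z(u)$: choose a local holomorphic frame $e_1, \ldots, e_r$ of $E$ over $U$, so that each $e_j$ is nonvanishing and $(4)$ makes $\log\|e_j\|^2_h$ psh, hence locally bounded above. Writing $u = \sum f_j e_j$ and applying the triangle inequality for the seminorm $\|\cdot\|_{h(x)}$, we get $\|u(x)\|_{h(x)} \leq \sum |f_j(x)| \|e_j(x)\|_{h(x)}$, which is locally bounded. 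Moreover, at any $x_0 \in Z(u)$ all $f_j(x_0) = 0$, so the same estimate forces $\|u(x)\|_{h(x)} \to 0$ as $x \to x_0$. It follows that $\log\|u\|^2_h$ is upper semicontinuous on $U$ (with value $-\infty$ on $Z(u)$), and the standard extension theorem for psh functions across a closed analytic set (applied to the locally-upper-bounded psh function on $U \setminus Z(u)$) yields that $\log\|u\|^2_h$ is psh on all of $U$.

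For $(3) \Rightarrow (1)$, I work in a local trivialization $E|_U \cong U \times \C^r$ and verify that $\chi_h(x, \xi) = \log\|\xi\|^2_{h(x)}$ is psh on $U \times \C^r$. First, applying $(3)$ to the constant section $v \equiv b$ shows $\|b\|_{h(x)} < \infty$ for every $x \in U$ and $b \in \C^r$, so $h$ is a bona fide (finite) hermitian form pointwise and $\chi_h$ is $[-\infty, \infty)$-valued. I then check the sub-mean-value property along a complex line $t \mapsto (x_0 + ta, \xi_0 + tb)$. If $a \neq 0$, after translating $x_0$ to $0$ and choosing linear coordinates so that $a = (1, 0, \ldots, 0)$, the holomorphic section $u(x) := \xi_0 + x_1 b$ restricts to $\xi_0 + tb$ along the $t$-line, so subharmonicity of $t \mapsto \chi_h(x_0 + ta, \xi_0 + tb) = \log\|u(x_0 + ta)\|^2_h$ follows from $(3)$ applied to $u$. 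If $a = 0$, the restriction is $t \mapsto \log H(\xi_0 + tb, \xi_0 + tb)$ for the fixed nonnegative hermitian form $H = h(x_0)$, which is a subharmonic function of $t$ by a direct $\partial\bar\partial$-computation. Upper semicontinuity follows from subharmonicity on $E \setminus 0_E$ together with the triangle-inequality bound above, which gives $\chi_h(x, \xi) \to -\infty$ as $(x, \xi) \to 0_E$.

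The main obstacle is the extension in $(4) \Rightarrow (3)$: one must control $\log\|u\|^2_h$ in a neighborhood of $Z(u)$, where the section vanishes but $h$ could a priori blow up. The crucial input is the triangle inequality for the seminorm together with $(4)$ applied to a local frame, which supplies the required local upper bound; without it the psh extension across $Z(u)$ would not go through.
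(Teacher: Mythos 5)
Your proof is correct and takes a genuinely different route from the paper. The paper's scheme is to prove the two bi-implications (Griffiths negative) $\Leftrightarrow$ (all sections) and (psh off $0_E$) $\Leftrightarrow$ (nonvanishing sections) by the same slicing argument along complex lines in $X\times\C^r$ (splitting into lines with constant or nonconstant base component), and then to close the gap with (psh off $0_E$) $\Rightarrow$ (Griffiths negative) by an upper-semicontinuity argument at $0_E$ that exploits the homogeneity $\chi_h(x,\lambda\xi)=\chi_h(x,\xi)+\log|\lambda|^2$ and a compactness bound on the cylinder $\{|x|\le 1,|\xi|=1\}$. You instead close the cycle with the implication (nonvanishing sections) $\Rightarrow$ (all sections), which the paper never proves directly: you extend $\log\|u\|_h^2$ across the zero set $Z(u)\subset X$ using the triangle inequality $\|u\|_h\le\sum|f_j|\|e_j\|_h$ for a local frame, the local upper boundedness of the $\log\|e_j\|_h^2$, and the Grauert--Remmert-type psh extension theorem across an analytic set. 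So both proofs have one nontrivial "extension across a thin set" step, but for you the thin set is $Z(u)$ in the base whereas for the paper it is $0_E$ in the fiber direction; these are complementary ways of supplying the crucial local upper bound. One small point worth tightening: in your $(3)\Rightarrow(1)$ step you write that upper semicontinuity of $\chi_h$ "follows from subharmonicity on $E\setminus 0_E$", but subharmonicity of all line restrictions does not by itself give ambient upper semicontinuity, and you cannot invoke $(2)$ here since $(3)\Rightarrow(1)$ must close the cycle independently. The fix is exactly the triangle-inequality estimate you already have: writing $\|\xi_m\|_{h(x_m)}\le\|\xi_0\|_{h(x_m)}+\sum_j|\xi_{m,j}-\xi_{0,j}|\,\|e_j\|_{h(x_m)}$, the second term tends to $0$ by local boundedness and the first has $\limsup\le\|\xi_0\|_{h(x_0)}$ since $x\mapsto\log\|\xi_0\|^2_{h(x)}$ is psh by $(3)$. (The paper's slicing step leaves a similar usc point tacit, so this is not a defect relative to the original argument, but it is worth making explicit.)
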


\begin{proof}
We first prove that \eqref{first} is equivalent to \eqref{third} and
that \eqref{second} is equivalent to \eqref{fourth}. 
Note that if $u$ is a local holomorphic section of $E$, then $\log
\|u(x)\|^2_{h(x)}=\chi_h\circ u(x)$. Thus if $\chi_h$ is psh, then so is
$\log \|u\|^2_h$ since $u$ is a holomorphic map. Hence \eqref{third} implies \eqref{first}. 
If $u\neq 0$, then it is enough that $\chi_h$ is psh on
$E\setminus0_E$ in order for $u$ to be psh. Thus \eqref{fourth}
implies \eqref{second}. 

For the converses, since plurisubharmonicity is a local property, 
we may assume that $X$ is an open subset of 
$\C^n$ and that $E=X \times \C^r$. To prove
that $\chi_h$ is psh it is then sufficient to prove that $\chi_h \circ\gamma (t)$ is
subharmonic on (the restriction to $E$ of) any complex line $\gamma(t)$ in $\C^n \times \C^r$. We
let $\gamma_0$ and $\gamma_1$ denote the components of $\gamma$ in
$\C^n$ and $\C^r$, respectively. 
If $\gamma_0(t)$ is constant then
\[
\chi_h\circ\gamma(t)=\log \|\gamma_1(t)\|^2_{h(\gamma_0(t))} = \log
\|\gamma_1(t)\|^2_{h_0}, 
\]
where $h_0$ is the constant metric
$h(\gamma_0)$. If $\|u\|^2_h$ is psh for all $u$, then 
$h_0$ has to be finite, and thus since $\gamma_1(t)$ is a holomorphic curve, 
it follows that $\chi_h\circ\gamma$ is subharmonic. 
If $\gamma_0(t)$ is not constant, then note that 
$\gamma_1(t)=u\circ \gamma_0(t)$ for some
(linear) holomorphic function $u$, that can be extended to a
holomorphic section on $X$.  
Thus 
\[
\chi_h\circ\gamma(t)=\log \|\gamma_1(t)\|^2_{h(\gamma_0(t))} = 
\log\|u\circ \gamma_0(t)\|^2_{h(\gamma_0(t))}=\log\|u\|^2_{h}\circ \gamma_0(t).
\]
Since $\gamma_0$ is holomorphic, $\chi_h\circ\gamma$ is subharmonic if
$\|u\|^2_h$ is psh. Hence \eqref{first} implies \eqref{third}. 
To show that $\chi_h$ is psh outside the zero section of $E$, $u$ can be
chosen nonvanishing, and thus \eqref{fourth} follows from
\eqref{second}.

\smallskip 

Next we prove that \eqref{third} is equivalent to
\eqref{fourth}. Clearly \eqref{third} implies \eqref{fourth}. 
To prove the converse assume that $\chi_h$ is psh on $E\setminus
0_E$. Then $\|\xi\|^2_{h(x)}$ is finite on $E\setminus0_E$ and thus
by homogeneity it must vanish on $0_E$, which means that
$\chi_h|_{0_E}\equiv -\infty$. It follows that $\chi_h$ trivially satisfies the
sub-mean value property at each point of $0_E$. 

To prove that $\chi_h$ is upper semicontinuous at $0_E$, choose
$(x_0,\xi_0)\in0_E$ and let $(x_k,\xi_k)$ be a sequence of points
converging to $(x_0,\xi_0)$. We need to prove that $\lim_k \chi_h
(x_k,\xi_k)=-\infty$. As above, we may assume that 
$X$ is an open subset of $\C^n_x$ and $E=X\times \C^r_\xi$, 
and moreover that 
 $0_E=\{\xi=0\}$ and $(x_0,\xi_0)=(0,0)$. 
Also we may assume that $(x_k,\xi_k)$ are contained in the set 
$\{|x|\leq 1, |\xi|\leq 1\}$. Let $C$ be the compact ``cylinder'' 
\[
C=\{|x|\leq 1, |\xi|=1\}.
\]
Since $\chi_h$ is psh and thus upper semi-continuous outside $0_E$,
$\chi_h|_C\leq M$ for some $M<\infty$. By homogeneity it follows that 
\[
\chi_h(x_k,\xi_k)\leq M+\log |\xi_k|^2\to -\infty.
\]
Thus $\chi_h(x,\xi)$ is upper semicontinuous at $0_E$ and hence it is
psh in $E$. 
\end{proof}

Let $\pi:\P(E)\to X$ denote the \emph{projectivization} of $E$, i.e.,
the projective bundle of lines in the dual bundle $E^*$ of $E$, i.e.,
$\P(E)_x=\P(E_x^*)$. 
The pullback bundle $\pi^*E^*\to \P(E)$ then carries a tautological
line bundle 
\[
\Ok_{\P(E)}(-1)=\{(x,[\xi];v), v\in \C\xi\} \subset \pi^*E^*. 
\]
Let $e^\varphi$ denote the restriction of $\pi^*h^*$ to
$\Ok_{\P(E)}(-1)$. Then $e^{-\varphi}$ is the dual metric on the dual
line bundle $\Ok_{\P(E)}(1)$. If $E$ is a line bundle,
then $\Ok_{\P(E)}(1)\cong E$ and $e^{-\varphi}\cong h$.

Let us describe $\varphi$ in a local trivialization. After possibly
shrinking $X$ we may assume that $E= X \times \C^r$; then
$\P(E)=X\times \P^{r-1}$. 
For $i=1,\ldots, r$, let 
\[
\U_i=\{(x,[\xi])\in \P(E), \xi_i\neq 0\}.
\]
Then $\{\U_i\}$ is an open cover of $\P(E)$ and $\Ok_{\P(E)}(-1)$ is defined by the trivializations 
\[
\vartheta_i:\Ok_{\P(E)}(-1)|_{\U_i}\to \U_i\times \C, ~~~~
(x,[\xi];v)\mapsto (x,[\xi];v_i).
\]
Now, on $\U_i$, 
\begin{equation}\label{trottast}
\|v\|^2_{\pi^*h^*(x,[\xi])}=
|\vartheta_i(v)|^2e^{\varphi_i(x,[\xi])}=
|v_i|^2e^{\varphi_i(x,[\xi])}.
\end{equation}
Moreover, since $\pi^*h^*$ is a pullback metric
\begin{equation}\label{trottare}
\|v\|_{\pi^*h^*(x,[\xi])}=
\|v\|_{h^*(x)}.
\end{equation}
By applying \eqref{trottast} and \eqref{trottare} to $v=\xi$ we get 
\begin{equation}\label{dummemosse}
\varphi_i(x,[\xi])=
\log \|\xi/\xi_i\|^2_{h^*(x)} =\chi_{h^*}(x,\xi/\xi_i). 
\end{equation}
Note that this is well-defined since the second and third expressions
only depend on $[\xi]$.

\begin{prop}\label{storemosse}
Let $h$ be a singular hermitian metric on a holomorphic vector bundle $E$. Then
$h$ is Griffiths positive if and only if the induced singular metric
$e^{-\varphi}$ on $\Ok_{\P(E)}(1)$ is positive. 
\end{prop}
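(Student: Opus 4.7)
The plan is to reduce both directions to the local formula \eqref{dummemosse}, i.e., $\varphi_i(x,[\xi])=\chi_{h^*}(x,\xi/\xi_i)$ on $\U_i$, together with Proposition~\ref{blabar} applied to the dual bundle $E^*$ with metric $h^*$. Positivity of $e^{-\varphi}$ on $\Ok_{\P(E)}(1)$ is, by definition, the statement that each local weight $\varphi_i$ is psh on $\U_i$, and Griffiths positivity of $h$ is by definition Griffiths negativity of $h^*$, which by Proposition~\ref{blabar} is equivalent to $\chi_{h^*}$ being psh on the total space of $E^*$ (or, equivalently, on $E^*\setminus 0_{E^*}$).

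For the forward direction, assume $h$ is Griffiths positive, so $\chi_{h^*}$ is psh. Working locally with $E = X\times \C^r$, on $\U_i\subset X\times \P^{r-1}$ the map
\[
\U_i \to X\times \C^r, \qquad (x,[\xi])\mapsto (x,\xi/\xi_i),
\]
is holomorphic (in affine coordinates $w_j=\xi_j/\xi_i$ on the fiber), so by \eqref{dummemosse} $\varphi_i$ is the composition of $\chi_{h^*}$ with a holomorphic map and is therefore psh. Hence $e^{-\varphi}$ is positive.

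For the converse, assume each $\varphi_i$ is psh. On the open subset $\{\xi_i\neq 0\}\subset E^*\setminus 0_{E^*}$, the homogeneity identity $\|\xi\|^2_{h^*(x)}=|\xi_i|^2\|\xi/\xi_i\|^2_{h^*(x)}$ combined with \eqref{dummemosse} gives
\[
\chi_{h^*}(x,\xi) = \log|\xi_i|^2 + \varphi_i(x,[\xi]) = \log|\xi_i|^2 + (\varphi_i\circ p)(x,\xi),
\]
where $p\colon \{\xi_i\neq 0\}\to \U_i$ is the holomorphic projection $(x,\xi)\mapsto (x,[\xi])$. Both terms on the right are psh, so $\chi_{h^*}$ is psh on $\{\xi_i\neq 0\}$. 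As $i$ varies, these open sets cover $E^*\setminus 0_{E^*}$, so $\chi_{h^*}$ is psh there. By the implication \eqref{fourth}$\Rightarrow$\eqref{third} of Proposition~\ref{blabar} (applied to $h^*$), $\chi_{h^*}$ extends to a psh function on all of $E^*$; that is, $h^*$ is Griffiths negative, so $h$ is Griffiths positive.

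No step is genuinely hard: the only point requiring a little care is the converse, where one must note that the formula $\chi_{h^*}=\log|\xi_i|^2+\varphi_i\circ p$ is defined only on $\{\xi_i\neq 0\}$ and then invoke Proposition~\ref{blabar} to upgrade plurisubharmonicity on $E^*\setminus 0_{E^*}$ to plurisubharmonicity across the zero section.
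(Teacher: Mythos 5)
Your proof is correct and follows essentially the same route as the paper: both directions reduce to the local identity \eqref{dummemosse} (equivalently \eqref{jul}), and Proposition~\ref{blabar} applied to $h^*$ supplies the equivalence between plurisubharmonicity of $\chi_{h^*}$ on $E^*\setminus 0_{E^*}$ and on all of $E^*$. If anything, your write-up is slightly more careful than the paper's, which tacitly writes $E$ in a couple of places where the total space of $E^*$ is meant.
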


\begin{proof}
Since this is a local statement we may assume that we are in the
situation above. 
Then $e^{-\varphi}$ is positive if and only if $\varphi_i$ is psh on
$\U_i$ for all $i$. Moreover, by Proposition ~\ref{blabar}, $h$ is
Griffiths positive if and only if $\chi_{h^*}(x,\xi)$ is psh on $E$ or
equivalently on $E\setminus 0_E$. 

Since $\xi_i\neq 0$ on $\U_i$, in view of \eqref{dummemosse},
$\varphi_i$ is psh there if $\chi_{h^*}$ is psh. Thus $e^{-\varphi}$
is positive if $h$ is Griffiths positive. 
For the converse, if $(x,\xi)\in E\setminus 0_E$, then $\xi_i\neq 0$ for
some $i$ in some neighborhood $\U$ of $(x,\xi)$. Then, by
\eqref{dummemosse}, 
\begin{equation}\label{jul}
\chi_{h^*}(x,\xi)=\chi_{h^*}(x,\xi/\xi_i)+\log |\xi_i|^2=\varphi_i(x,[\xi])+\log |\xi_i|^2
\end{equation}
there. 
Since $\xi\to[\xi]$ is holomorphic and $\log |\xi_i|^2$ is
pluriharmonic where $\xi_i\neq 0$, it follows that
$\chi_{h^*}$ is psh in $\U$ if $\varphi_i$ is psh. We conclude
that $h$
is Griffiths positive if $e^{-\varphi}$ is positive. 
\end{proof}

\begin{df}\label{fjadrar}
We say that a Griffiths positive hermitian metric has \emph{analytic
  singularities} if the induced positive metric $e^{-\varphi}$ on $\Ok_{\P(E)}(1)$ has analytic
singularities.
\end{df}

\begin{prop}\label{gummistovlar}
Let $h$ be a Griffiths positive hermitian metric on a holomorphic vector bundle
$E$. Then $h$ has analytic singularities if and only if $\chi_{h^*}$ is
psh with analytic singularities on $E\setminus 0_E$. 
\end{prop}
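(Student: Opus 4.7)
The plan is to exploit the two local formulas from the previous proposition, namely \eqref{dummemosse} and \eqref{jul}, which on the chart $\U_i = \{\xi_i\ne 0\}$ express $\varphi_i$ and $\chi_{h^*}$ in terms of one another up to the pluriharmonic correction $\log|\xi_i|^2$. The key observation is that $\log|\xi_i|^2$ is smooth, hence locally bounded, on $\{\xi_i\ne 0\}\subset E\setminus 0_E$, so it slots harmlessly into the bounded summand $v$ of any analytic-singularity decomposition $c\log|F|^2+v$. Since analytic singularities are a local property and the sets $\U_i$ cover $\P(E)$ while $\{\xi_i\ne 0\}$ cover $E\setminus 0_E$, it suffices to argue on one chart at a time.

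For the "if" direction, suppose $\chi_{h^*}$ has analytic singularities on $E\setminus 0_E$. Given $(x_0,[\xi_0])\in\U_i$, introduce the holomorphic section $s_i\colon \U_i\to E\setminus 0_E$, $s_i(x,[\xi])=(x,\xi/\xi_i)$. By \eqref{dummemosse} one has $\varphi_i=s_i^*\chi_{h^*}$, so pulling back a local representation $\chi_{h^*}=c\log|G|^2+w$ via the holomorphic map $s_i$ yields $\varphi_i = c\log|s_i^*G|^2 + s_i^*w$, with $s_i^*G$ a holomorphic tuple on $\U_i$ and $s_i^*w$ bounded. Since Proposition \ref{storemosse} already ensures that $\varphi_i$ is psh and not identically $-\infty$, the tuple $s_i^*G$ is not identically zero, so this is a bona fide analytic-singularity decomposition of $\varphi_i$.

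For the "only if" direction, assume $h$ has analytic singularities, so each weight $\varphi_i$ is locally of the form $c\log|F_i|^2+v_i$ with $v_i$ bounded. At a point $(x_0,\xi_0)\in E\setminus 0_E$, pick $i$ with $\xi_{0,i}\ne 0$ and work in a relatively compact neighborhood contained in $\{\xi_i\ne 0\}$. Formula \eqref{jul} gives
\[
\chi_{h^*}(x,\xi)= \varphi_i(x,[\xi]) +\log|\xi_i|^2 = c\log|\pi^*F_i|^2 + \bigl(\pi^*v_i+\log|\xi_i|^2\bigr),
\]
where $\pi\colon E\setminus 0_E\to \P(E)$ is the natural projection. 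The bracketed term is bounded on such a neighborhood, since $\log|\xi_i|^2$ is smooth away from $\{\xi_i=0\}$, so $\chi_{h^*}$ has analytic singularities at $(x_0,\xi_0)$.

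There is really no hard step here; the proof is a direct unravelling of the relation between $\chi_{h^*}$ and the $\varphi_i$. The only point that deserves a moment of care is to see that both operations in play — restriction to a holomorphic submanifold via $s_i$, and addition of a locally bounded pluriharmonic function $\log|\xi_i|^2$ — preserve the shape $c\log|F|^2+v$ with $v$ bounded, without forcing any change of the constant $c>0$ or any rewriting of the generating tuple.
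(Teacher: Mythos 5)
Your proof is correct and takes essentially the same route as the paper: the paper's proof simply cites "the proof of Proposition~\ref{storemosse}" for the equivalence, and your argument fills in exactly the details it has in mind, namely that \eqref{dummemosse} and \eqref{jul} relate $\varphi_i$ and $\chi_{h^*}$ on $\{\xi_i\neq 0\}$ by composition with a holomorphic map and addition of the locally bounded pluriharmonic function $\log|\xi_i|^2$, both of which preserve the local shape $c\log|F|^2+v$. The one genuinely necessary observation you correctly supply is that the pulled-back tuple $s_i^*G$ is not identically zero because $\varphi_i$ is psh and not identically $-\infty$.
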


\begin{proof}
Let us assume that we are in the local situation above. 
Then $h$ has analytic singularities if and only if $\varphi_i$ are
psh with analytic singularities for all $i$. In view of the proof of
Proposition ~\ref{storemosse} this is in turn equivalent to that $\chi_{h^*}$ is
psh with analytic singularities on $E\setminus 0_E$.
\end{proof}

We do not
know whether it is possible to express analytic singularities of $h$
in terms (of analytic singularities) of the functions $\log \|u\|^2_{h}$.

\begin{ex}\label{hosono}
In \cite[Example~3.6]{Hos} Hosono constructed a family of examples of singular
hermitian metrics that generalize the metrics in Example ~\ref{fagel}: 
Assume that 
$E\to X$ is a holomorphic vector bundle with global
holomorphic sections $s_1,\ldots, s_N$. 
Let $s$ be the morphism from the dual bundle $E^*$ to the trivial
bundle $X\times\C^N$ given by $(x,\xi) \mapsto (s_1(x,\xi),\ldots,
s_N(x,\xi))$ and let $h^*$ be the pullback under $s$ of the trivial
metric on $X\times \C^N$, i.e., 
\begin{equation*}
\la \xi,\eta \ra_{h^*(x)} := \la s(x,\xi), s(x,\eta) \ra.
\end{equation*}
Then 
\[
\|\xi\|^2_{h^*(x)}=|s(x,\xi)|^2=\sum_j |s_j(x,\xi)|^2.
\]
It follows that $\chi_{h^*}(\xi,x)=\log |s(x,\xi)|^2$ is psh with analytic
singularities on $E^*$. Thus by Proposition ~\ref{gummistovlar} the
dual metric $h$ of $h^*$ on $E=(E^*)^*$ is
Griffiths positive with analytic singularities. 
\end{ex}

Given a Griffiths positive singular metric $h$, $\log \det h^*$ is
psh, see \cite[Proposition~1.3]{R}, and we can define the
\emph{degeneracy locus} of $h$ as the unbounded locus of $\log \det
h^*$. The following lemma gives alternative definitions in terms of (the unbounded loci)
of  $\chi_{h^*}$ and $\varphi$.

\begin{lma}\label{annandag}
Assume that $h$ is a Griffiths positive singular metric on $E\to X$. Then, using the
notation from above and denoting the projection $E^*\to X$ by $p$,  
\begin{equation}\label{julafton}
L(\log \det h^*)=p\big (L(\chi_{h^*})\setminus 0_E \big ) = \pi \big
(L(\varphi)\big ).
\end{equation}
\end{lma}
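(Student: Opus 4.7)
I would prove the two equalities in \eqref{julafton} separately.

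The second equality $p(L(\chi_{h^*}) \setminus 0_E) = \pi(L(\varphi))$ is essentially formal, and follows from the local computation in the proof of Proposition~\ref{storemosse}. In a trivialization $E = X \times \C^r$ one has $\varphi_i(x,[\xi]) = \chi_{h^*}(x, \xi/\xi_i)$ on $\U_i$ and equivalently $\chi_{h^*}(x,\xi) = \varphi_i(x,[\xi]) + \log|\xi_i|^2$ (equation \eqref{jul}). Since $\log|\xi_i|^2$ is pluriharmonic, and in particular bounded on small neighborhoods, where $\xi_i \neq 0$, these identities show that for $\xi_0 \neq 0$ and any index $i$ with $\xi_{0,i} \neq 0$, the point $(x_0,\xi_0)$ lies in $L(\chi_{h^*})$ precisely when $(x_0,[\xi_0])$ lies in $L(\varphi_i)$; taking images under $p$ and $\pi$ gives the equality.

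For the first equality $L(\log \det h^*) = p(L(\chi_{h^*}) \setminus 0_E)$, I would fix a smooth nondegenerate hermitian metric $h^*_0$ on $E^*$ as a reference and consider the generalized eigenvalues $\mu_1(x) \leq \cdots \leq \mu_r(x)$ of $h^*(x)$ relative to $h^*_0(x)$, so that $\det h^*(x)/\det h^*_0(x) = \mu_1(x) \cdots \mu_r(x)$. The key input is that $\chi_{h^*}$ is psh on $E^*$ (since $h^*$ is Griffiths negative by Proposition~\ref{blabar}), hence locally bounded above; evaluating on the compact $h^*_0$-unit sphere bundle over a small chart yields a local uniform upper bound $\mu_r(x) \leq C$. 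Combined with the elementary inequalities
\[
\mu_1(x)^r \leq \mu_1(x)\cdots\mu_r(x) \leq C^{r-1}\, \mu_1(x),
\]
this shows that along any sequence $x_k$ remaining in a fixed compact, $\mu_1(x_k) \to 0$ if and only if $\log \det h^*(x_k) \to -\infty$.

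Given this dictionary, both inclusions follow. For $p(L(\chi_{h^*})\setminus 0_E) \subseteq L(\log \det h^*)$: take $(x_0,\xi_0) \in L(\chi_{h^*})$ with $\xi_0 \neq 0$ and a sequence $(x_k,\xi_k) \to (x_0,\xi_0)$ with $\chi_{h^*}(x_k,\xi_k) \to -\infty$. Since $\|\xi_k\|^2_{h^*_0(x_k)}$ is bounded below, $\mu_1(x_k) \leq \|\xi_k\|^2_{h^*(x_k)}/\|\xi_k\|^2_{h^*_0(x_k)} \to 0$, so $\log\det h^*(x_k) \to -\infty$ and $x_0 \in L(\log\det h^*)$. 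For the opposite inclusion: take $x_0 \in L(\log\det h^*)$ and $x_k \to x_0$ (possibly $x_k \equiv x_0$) with $\log\det h^*(x_k) \to -\infty$. Then $\mu_1(x_k) \to 0$; choose eigenvectors $\eta_k \in E^*_{x_k}$ for $\mu_1(x_k)$ normalized by $\|\eta_k\|_{h^*_0(x_k)} = 1$, and pass to a subsequence in a local trivialization so that $\eta_k \to \eta_0$. Then $\eta_0 \neq 0$ (as a unit vector for $h^*_0(x_0)$), and $\chi_{h^*}(x_k,\eta_k) = \log\mu_1(x_k) \to -\infty$, giving $(x_0,\eta_0) \in L(\chi_{h^*}) \setminus 0_E$. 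The main obstacle is precisely this last step: extracting a genuine nontrivial limiting direction of degeneration; it relies on the uniform upper eigenvalue bound, which in turn comes from the plurisubharmonicity (hence local upper boundedness) of $\chi_{h^*}$.
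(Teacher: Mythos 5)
Your proof is correct. The second equality is handled exactly as in the paper, via the identity \eqref{jul}. For the inclusion $L(\log\det h^*)\subset p\big(L(\chi_{h^*})\setminus 0_E\big)$ you use essentially the same argument as the paper: detect a small eigenvalue from $\det h^*(x_k)\to 0$, extract a normalized eigenvector $\eta_k$, and pass to a convergent subsequence to produce a nonzero limiting direction of degeneration. (The paper works directly with the Euclidean metric in a trivialization where you use a reference metric $h_0^*$, but this is cosmetic.) The genuine difference is in the reverse inclusion $p\big(L(\chi_{h^*})\setminus 0_E\big)\subset L(\log\det h^*)$, where the paper simply quotes Lemma~3.7 of \cite{LRRS}, whereas you prove it directly. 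Your key observation that the plurisubharmonicity of $\chi_{h^*}$ forces a local uniform upper bound $\mu_r\leq C$ on the generalized eigenvalues, and hence the two-sided comparison $\mu_1^r\leq\mu_1\cdots\mu_r\leq C^{r-1}\mu_1$, is correct and gives a clean, self-contained ``dictionary'' between $\mu_1(x_k)\to 0$ and $\log\det h^*(x_k)\to-\infty$ that handles both inclusions symmetrically; this is what the paper's treatment outsources to the earlier reference.
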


In particular, it follows that if $h$ has analytic singularities, then
the degeneracy locus of $h$ is a subvariety of $X$.

\begin{proof}
In view of \eqref{jul}, $(x,\xi)\in E\setminus 0_E$ is in
$L(\chi_{h^*})$ if and only if $(x,[\xi])\in \P(E)$ is in
$L(\varphi)$, and thus the
  second equality in \eqref{julafton} follows. 
The inclusion $\pi \big (L(\varphi)\big )\subset L(\log \det h^*)$ is
an immediate consequence of Lemma ~3.7 in \cite{LRRS}.

Thus it remains to prove that 
\begin{equation}\label{juldan}
L(\log \det h^*)\subset p\big (L(\chi_{h^*})\setminus 0_E \big ). 
\end{equation}
Since the statement is local we may assume that $E=X\times \C^r$. 
Take $x\in L(\log \det h^*)$. Then, by definition there is a sequence
$x_k\to x$ such that $\log \det h^*(x_k)\to -\infty$. This means that there
is a sequence $\varepsilon_k\to 0$ such that 
$\det h^*(x_k) < \varepsilon_k^r$, which implies that $h^*(x_k)$ has at
least one eigenvalue less than $\varepsilon_k$. Thus there are
$\xi_k\in E_{x_k}^*=\C^r$ such that $\|\xi_k\|_{\C^r}=1$ and
$\|\xi_k\|_{h^*(x_k)}<\varepsilon_k$. 
Since $\|\xi_k\|_{\C^r}=1$, $\{\xi_k\}$ has at least one accumulation
point $\xi$ in $\C^r$ and thus we can
find a subsequence $(x_k,\xi_k)\to (x,\xi)$. 
Since
$\|\xi_k\|_{h^*(x_k)}\to 0$, $(x,\xi)\in L(\chi_{h^*})$.
Moreover, since $\|\xi_k\|_{\C^r}=1$, $\|\xi\|_{\C^r}\neq 0$ and thus $x\in
p\big (L(\chi_{h^*})\setminus 0_E \big )$, which proves
\eqref{juldan}.
\end{proof}

\section{Construction of Segre and Chern currents, proof of Theorem ~\ref{thmA}}\label{konstruera}

\subsection{Construction, basic properties}\label{plommon}
Assume that $X$ is a complex manifold of dimension $n$, that $E\to
X$ is a holomorphic vector bundle of rank $r$, and that 
$h$ is a Griffiths positive hermitian metric with analytic
singularities on $E$. 
Let $\pi:\P(E)\to X$
be the projectivization of $E$ and let $\varphi$ denote the metric on
$L:=\Ok_{\P(E)}(1)\to\P(E)$ induced by $h$.  Then $\varphi$ has analytic
singularities, cf.\ Definition ~\ref{fjadrar}; 
let $Z\subset \P(E)$ denote the unbounded locus of $\varphi$. 
Moreover, assume that $\psi$ is a smooth metric on $L$ and let $\theta$ be the
corresponding first Chern form. 

Next, 
let $E_1,\ldots, E_t$ be $t$ 
disjoint copies of $E$, let $\pi_i$ denote the projections $\P(E_i)\to X$, and
$p_i$ the identifications $\P(E_i)\to \P(E)$. 
Let $\tilde \varphi_i$ denote the metric $p_i^*\varphi$ on $\widetilde
L_i:=p_i^* L
\to \P(E_i)$ induced by $h$ with unbounded locus $\widetilde
Z_i:=p_i^{-1}(Z)$ and let
$\tilde \theta_i=p_i^*\theta$  and $\tilde\psi_i=p_i^*\psi$. 
Moreover, let $Y$ be the fiber product
\[
Y=\P(E_t)\times_X\cdots \times_X \P(E_1), 
\]
with projections $\varpi_i: Y\to \P(E_i)$ and $\pi: Y\to X$. 
Let $\varphi_i$ denote the pullback metric $\varpi_i^*\tilde
\varphi_i$ on $L_i:=\varpi_i^* \widetilde L_i$ with unbounded
locus 
$Z_i:=\varpi_i^{-1}(\widetilde Z_i)$ and let
$\theta_i=\varpi_i^*\tilde\theta_i$ and $\psi_i=\varpi_i^*
\tilde\psi_i$. 
Now, in view of Lemma ~\ref{filmfestival}, \eqref{korsbar} is a
well-defined $(k,k)$-current.

\begin{remark}\label{bibliotek}
Let $V$ be the degeneracy locus of $h$. Then in $Y\setminus \pi^{-1}
V$, $\varphi_j$ are locally bounded by Lemma \ref{annandag},  and thus 
\[
[dd^c\varphi_t]^{k_t+r-1}_{\theta_t}\wedge \cdots \wedge
[dd^c\varphi_1]^{k_1+r-1}_{\theta_1} 
=
(dd^c\varphi_t)^{k_t+r-1}\wedge \cdots \wedge
(dd^c\varphi_1)^{k_1+r-1},
\]
where the right hand side is locally defined in the sense of Bedford-Taylor. 
Hence outside $V$, 
\[
s_{k_t}(E,h,\theta)\wedge\cdots\wedge s_{k_1}(E,h,\theta)
=
(-1)^k\pi_* \big ( 
(dd^c\varphi_t)^{k_t+r-1}\wedge \cdots \wedge
(dd^c\varphi_1)^{k_1+r-1}\big );
\]
in particular it is independent of $\theta$, and thus so are $c_k(E,
h, \theta)$ and $s_k(E,h,\theta)$. 
\end{remark}

\begin{lma}\label{institution}
Let $X$, $E$, and $h$ be as above. 
Given $x\in X$, there is a neighborhood $x\in\U\subset X$ such that
in $\U$,
\begin{equation*}
s_{k_t}(E,h,\theta)\wedge\cdots\wedge s_{k_1}(E,h,\theta) = S_+-S_-,
\end{equation*}
where $S_+, S_-$ are closed positive currents.
\end{lma}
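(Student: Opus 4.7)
Pick $x \in X$ and a relatively compact open neighborhood $\U$ of $x$ over which $E$ is trivial, so $Y_\U := \pi^{-1}(\U) \cong \U \times (\mathbb{P}^{r-1})^t$. After possibly shrinking $\U$, choose a smooth closed positive $(1,1)$-form $\eta$ on $Y_\U$ (e.g., a sum of pulled-back Fubini--Study forms on the fiber factors plus a pulled-back K\"ahler form on $\U$). For $C > 0$ sufficiently large, $\omega_j^+ := \theta_j|_{Y_\U} + C\eta$ and $\omega_j^- := C\eta$ are smooth closed positive $(1,1)$-forms on $Y_\U$ with $\theta_j|_{Y_\U} = \omega_j^+ - \omega_j^-$ for each $j = 1,\ldots,t$.

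The operator $[dd^c\varphi_j]_{\theta_j} \wedge (\cdot)$ then decomposes on $Y_\U$ as $B_j + A_j^+ - A_j^-$, where $B_j(T) := dd^c(\varphi_j \1_{Y\setminus Z_j} T)$ and $A_j^\pm(T) := \omega_j^\pm \wedge \1_{Z_j} T$. Each of these three operators preserves the class of positive closed currents of the form \eqref{hjartat}: for $B_j$ this is exactly Proposition \ref{liseberg} together with Remark \ref{stress}; for $A_j^\pm$ it follows from the Skoda--El Mir theorem (so that $\1_{Z_j} T$ is positive closed when $T$ is) together with the fact that the wedge product of a smooth closed positive $(1,1)$-form with a positive closed current is again positive closed. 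Starting from the current $1$ on $Y_\U$ and iteratively applying $m_j := k_j + r - 1$ copies of $[dd^c\varphi_j]_{\theta_j}$ in order $j = 1, \ldots, t$ builds up $[dd^c\varphi_t]^{k_t+r-1}_{\theta_t}\wedge\cdots\wedge[dd^c\varphi_1]^{k_1+r-1}_{\theta_1}$ on $Y_\U$; expanding each $[dd^c\varphi_j]_{\theta_j} = B_j + A_j^+ - A_j^-$ and distributing realizes the product as a finite sum, each summand of which is $\pm$(an iterated composition of the $B_j, A_j^+, A_j^-$ applied to $1$) and hence a positive closed current, with overall sign $(-1)^{\#\{A_j^-\text{ factors}\}}$.

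Finally, since $\pi \colon Y_\U \to \U$ is proper, $\pi_*$ sends positive closed currents to positive closed currents. Applying $(-1)^k \pi_*$ and regrouping by sign yields the desired decomposition $s_{k_t}(E,h,\theta)\wedge\cdots\wedge s_{k_1}(E,h,\theta) = S_+ - S_-$ on $\U$ with $S_\pm$ closed positive. The main obstacle is the positivity-preservation of the operators $B_j$ and the verification that all intermediate currents remain of the form \eqref{hjartat}, so that the machinery of Section \ref{general} applies at every step; both are exactly what Proposition \ref{liseberg} and Remark \ref{stress} guarantee.
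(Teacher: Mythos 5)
Your proof is correct and is essentially the paper's argument: shrink to a trivializing neighborhood $\U$, split each $\theta_j$ as a difference of smooth closed positive $(1,1)$-forms (the paper takes $\omega_j^- = C\varpi_j^*(\rho_j^*\omega_{\FS}+\pi_j^*\omega_0)$ and $\omega_j^+ = \omega_j^- + \theta_j$, where you use a single K\"ahler form $\eta$ on the fiber product — an inessential variation), observe that each of $dd^c(\varphi_j\1_{Y\setminus Z_j}\,\cdot\,)$ and $\omega_j^\pm\wedge\1_{Z_j}(\cdot)$ sends closed positive currents of the relevant form to closed positive currents, and finally push forward by the proper map $\pi$. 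The only difference is bookkeeping: the paper groups the inductive step into a single difference $T'_+ - T'_-$ at each stage, whereas you fully expand the product and regroup by sign at the end — these are equivalent, and both rest on Proposition~\ref{liseberg}, Remark~\ref{stress}, and Skoda--El Mir exactly as you invoke them. (One small caution on phrasing: the class \eqref{hjartat} allows arbitrary closed smooth coefficient forms $\beta_i$, so "positive closed currents of the form \eqref{hjartat}" is not literally preserved by $B_j$; what matters, and what actually occurs in your expansion, is that every coefficient form arising is a wedge of the positive $\omega_j^\pm$'s, so positivity is maintained term by term.)
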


In view of \eqref{koko}, it follows that
$s_k(E,h,\theta)$ and $c_k(E,h,\theta)$ are differences of positive
closed $(k,k)$-currents; 
this proves the first part of Theorem~\ref{thmA}.

\begin{proof} 
Let us use the notation from above. 
Since the statement is local we may assume that $X$ is an open
neighborhood of $x$ in $\C^n$ and that $E=X\times \C^r$ is a trivial bundle. 
Then $\P(E_j)\cong X\times Y_j$, where
$Y_j\cong \P^{r-1}$. 
Let $\rho_j$ be the projection $\P(E_j)\to \P^{r-1}$.  Moreover, let
$\omega_\FS$ denote the Fubini-Study metric on $\P^{r-1}$, let
$\omega_0$ be the standard Euclidean metric on $X$,  
and let $\omega_j=\rho_j^*\omega_\FS+ \pi_j^* \omega_0$. 
Then for some large enough
$C>0$, there is a neighborhood $x\in \U\subset X$ such that $\tilde \alpha_j := C\omega_j$ satisfies that 
\[
\tilde\beta_j:=\tilde\alpha_j+\tilde\theta_j \geq 0
\]
in $\pi_j^{-1}\U\subset \P(E_j)$ for each $j$. 
Let $\alpha_j=\varpi_j^*\tilde\alpha_j$ and
$\beta_j=\varpi_j^*\tilde\beta_j$ be the corresponding closed
$(1,1)$-forms on $\pi^{-1}\U \subset Y$, so that $\theta_j=\beta_j-\alpha_j$. 

We claim that in $\pi^{-1} \U$, for $m_j\geq 1$, 
\[
[dd^c\varphi_t]^{m_t}_{\theta_t}\wedge \cdots \wedge
[dd^c\varphi_1]^{m_1}_{\theta_1} 
=
T_+-T_-,
\]
where $T_\pm$ are closed positive currents with analytic
singularities, cf.\ the beginning of Section ~\ref{general}.
Then in view of \eqref{korsbar}, 
$s_{k_t}(E,h,\theta)\wedge\cdots\wedge s_{k_1}(E,h,\theta)$
is of the desired form in $\U$. 

To prove the claim, first
note that if $m_1=0$, then, 
\[
[dd^c\varphi_1]^{m_1}_{\theta_1}=[dd^c\varphi_1]^0_{\theta_1}=1\geq 0.
\]
Next, assume that 
\[
T:=
[dd^c\varphi_\kappa]^{m_\kappa-1}_{\theta_\kappa}\wedge \cdots \wedge
[dd^c\varphi_1]^{m_1}_{\theta_1} 
=
T_+-T_-,
\]
where $m_\kappa\geq 1$, and where $T_\pm$ are as above. 
Then 
\begin{multline*}
[dd^c\varphi_\kappa]^{m_\kappa}_{\theta_\kappa}\wedge \cdots \wedge
[dd^c\varphi_1]^{m_1}_{\theta_1}
=
[dd^c\varphi_\kappa]_{\theta_\kappa}\wedge T
=
dd^c \varphi_\kappa \wedge \1_{Y\setminus Z_\kappa}T 
+ \theta_\kappa \wedge \1_{Z_\kappa} T
=\\
dd^c \varphi_\kappa \wedge \1_{Y\setminus Z_\kappa}(T_+-T_-) 
+ (\beta_\kappa-\alpha_\kappa) \wedge \1_{Z_\kappa} (T_+ - T_-)
=\\
\big ( dd^c \varphi_\kappa \wedge \1_{Y\setminus Z_\kappa} T_+
+\beta_\kappa \wedge \1_{Z_\kappa} T_+
+\alpha_\kappa \wedge \1_{Z_\kappa}  T_- \big )
-\\
\big ( dd^c \varphi_\kappa \wedge \1_{Y\setminus Z_\kappa}T_-
+\beta_\kappa \wedge \1_{Z_\kappa}  T_-
+ \alpha_\kappa \wedge \1_{Z_\kappa} T_+\big ) =:T'_+-T'_-.
\end{multline*} 
Since $T_\pm$ are closed positive currents with analytic
singularities and $\beta_\kappa$ and $\alpha_\kappa$ are positive
$(1,1)$-forms, $T'_\pm$ are well-defined closed positive currents with
analytic singularities. 
The claim now follows by induction. 
\end{proof}

\subsection{Comparison to the smooth case, proof of statement \eqref{andra}
  in Theorem ~\ref{thmA}}\label{sakura}

Note that to prove statement \eqref{andra} in Theorem ~\ref{thmA} it suffices to
show that 
\begin{equation}\label{gulkork}
s_{k_t}(E,h,\theta)\wedge\cdots\wedge s_{k_1}(E,h,\theta) = 
s_{k_t}(E,h)\wedge\cdots\wedge s_{k_1}(E,h)
\end{equation}
when $h$ is smooth. 

To this end, assume that $h$, and thus $\varphi$, is smooth. 
Let $\alpha_j$ be the smooth form 
\[
\alpha_j:=(dd^c\tilde \varphi_j)^{k_j+r-1},
\]
where we use the notation from Section
~\ref{plommon}. 
Then $s_{k_j}(E,h)=(-1)^{k_j}(\pi_j)_*\alpha_j$, cf.\ \eqref{ny}, and thus 
\begin{equation}\label{rodhake}
s_{k_t}(E,h)\wedge\cdots\wedge s_{k_1}(E,h)
=
(-1)^k (\pi_t)_*\alpha_t\wedge\cdots\wedge (\pi_1)_*\alpha_1.
\end{equation}
Note that in this case
\[
\varpi_j^*\alpha_j=(dd^c\varpi_j^*\tilde\varphi_j)^{k_j+r-1}=
(dd^c \varphi_j)^{k_j+r-1},
\]
and thus in view of Remark \ref{bibliotek}
\[
\varpi_t^*\alpha_t\wedge\cdots\wedge \varpi_1^*\alpha_1=
[dd^c\varphi_t]_{\theta_t}^{k_t+r-1}\wedge\cdots\wedge [dd^c\varphi_1]_{\theta_1}^{k_1+r-1},
\]
so that 
\begin{equation*}
s_{k_t}(E,h,\theta)\wedge\cdots\wedge s_{k_1}(E,h,\theta) 
=
(-1)^k \pi_* \big ( \varpi_t^*\alpha_t\wedge\cdots\wedge \varpi_1^*\alpha_1
\big ). 
\end{equation*}
Now \eqref{gulkork}
follows from the following lemma 
(with $Y_j=\P(E_j)$).

\begin{lma} \label{forwards}
Let $X$ be a complex manifold, let $\pi_j\colon Y_j\rightarrow X$,
$j=1,\ldots, t$, be proper submersions, and let $Y$ be the fiber
product $Y:=Y_t\times_X\cdots \times_X Y_1$ with projections 
$\varpi_j:Y\to Y_j$ and $\pi:Y\to
X$. Let $\alpha_1$ be a 
current on $Y_1$, and let $\alpha_2,\ldots, \alpha_t$ be smooth forms
on $Y_2,\ldots, Y_t$, respectively. 
Then 
\begin{equation}\label{fiskmoll}
\pi_* \big ( \varpi_t^*\alpha_t\wedge\cdots\wedge \varpi_1^*\alpha_1
\big )
=
(\pi_t)_*\alpha_t \wedge \cdots\wedge (\pi_1)_* \alpha_1.
\end{equation}
\end{lma}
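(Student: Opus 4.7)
The plan is to proceed by induction on $t$, reducing the general statement to iterated applications of the two projection formulas \eqref{daghem} and \eqref{daghem2} together with the base change identity for a Cartesian square of proper submersions. The base case $t=1$ is trivial, since then $Y=Y_1$, $\varpi_1=\Id$, and $\pi=\pi_1$.

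For the inductive step I would factor off the first factor. Let $Y'=Y_t\times_X\cdots\times_X Y_2$ with structure map $q\colon Y'\to X$ and projections $\varpi'_j\colon Y'\to Y_j$ for $j\geq 2$. Then $Y=Y'\times_X Y_1$, with projections $p'\colon Y\to Y'$ and $\varpi_1\colon Y\to Y_1$, and the two squares
\[
\begin{array}{ccc} Y & \xrightarrow{\varpi_1} & Y_1 \\ \downarrow p' & & \downarrow \pi_1 \\ Y' & \xrightarrow{q} & X \end{array}
\qquad
\pi = \pi_1\circ\varpi_1 = q\circ p'
\]
are Cartesian, and $\varpi_j=\varpi'_j\circ p'$ for $j\geq 2$. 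Since $\alpha_2,\ldots,\alpha_t$ are smooth, the form $\beta:=\varpi'^*_t\alpha_t\wedge\cdots\wedge\varpi'^*_2\alpha_2$ is a smooth form on $Y'$ and $\varpi^*_t\alpha_t\wedge\cdots\wedge\varpi^*_2\alpha_2=p'^*\beta$.

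Writing $\pi_*=(\pi_1)_*(\varpi_1)_*$ and using \eqref{daghem2} for the proper submersion $\varpi_1$ (smooth form $p'^*\beta$, current $\alpha_1$), I get
\[
\pi_*\bigl(\varpi^*_t\alpha_t\wedge\cdots\wedge\varpi^*_1\alpha_1\bigr)
=(\pi_1)_*\Bigl((\varpi_1)_*(p'^*\beta)\wedge \alpha_1\Bigr).
\]
The key step is the base change identity $(\varpi_1)_*\,p'^*\beta=\pi_1^*\,q_*\beta$, which for the smooth form $\beta$ reduces to a pointwise comparison of fiber integrals: the fiber of $\varpi_1$ over $y_1\in Y_1$ equals the fiber of $q$ over $\pi_1(y_1)$ via $p'$, and integrating $p'^*\beta$ over the former agrees with integrating $\beta$ over the latter. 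Substituting, and then applying \eqref{daghem} to $\pi_1$ with the smooth form $q_*\beta$ and the current $\alpha_1$, yields
\[
\pi_*\bigl(\varpi^*_t\alpha_t\wedge\cdots\wedge\varpi^*_1\alpha_1\bigr)
=(\pi_1)_*\bigl(\pi_1^*q_*\beta\wedge\alpha_1\bigr)
=q_*\beta\wedge(\pi_1)_*\alpha_1.
\]
By the inductive hypothesis applied to the smooth forms $\alpha_2,\ldots,\alpha_t$ on $Y_2,\ldots,Y_t$ and the fiber product $Y'$, we have $q_*\beta=(\pi_t)_*\alpha_t\wedge\cdots\wedge(\pi_2)_*\alpha_2$, which completes \eqref{fiskmoll}.

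The only nontrivial ingredient is the base change formula for smooth proper submersions. Since $\beta$ is smooth — this is the point at which smoothness of $\alpha_2,\ldots,\alpha_t$ is used — this is a classical statement about fiber integration in the Cartesian square, and I expect its verification (in local product trivializations of the two submersions) to be the main, but essentially routine, technical step of the proof.
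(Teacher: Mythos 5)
Your argument is correct and is essentially the paper's proof in different packaging: the paper reduces to the case $t=2$ and then performs exactly the computation you give in your inductive step (with $Y'$ playing the role of $Y_2$), using the two projection formulas \eqref{daghem}, \eqref{daghem2} together with the base change identity $(\varpi_1)_*p'^*\beta = \pi_1^*q_*\beta$ (which is the paper's \eqref{eq:pushPull2}), verified by local fiber-integration exactly as you indicate. The two routes are interchangeable; the only difference is whether one peels off one smooth factor at a time or all of $Y_2,\ldots,Y_t$ at once.
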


\begin{proof}
By induction it is enough to prove the case $t = 2$. It is also enough to prove \eqref{fiskmoll} locally. We may
therefore assume that $Y_j\cong X\times Z_j$, where $Z_j$ is a manifold for $j=1,2$. 
It is readily verified that 
\begin{equation}\label{eq:pushPull2}
\pi_1^\ast(\pi_2)_\ast \alpha_2 = (\varpi_1)_\ast \varpi_2^* \alpha_2
\end{equation}
since the pushforwards on both sides are just integration along
$Z_2$.
By \eqref{daghem}, \eqref{daghem2}, \eqref{eq:pushPull2}, and the fact that $\pi_1 \circ \varpi_1 = \pi$, we obtain that
	\begin{multline*}
	(\pi_2)_\ast \alpha_2 \wedge (\pi_1)_\ast \alpha_1 
	= (\pi_1)_\ast (\pi_1^\ast(\pi_2)_\ast \alpha_2 \wedge \alpha_1) 
	= (\pi_1)_\ast( (\varpi_1)_\ast\varpi_2^\ast \alpha_2 \wedge \alpha_1)\\
	= (\pi_1)_\ast (\varpi_1)_\ast (\varpi_2^\ast \alpha_2 \wedge \varpi_1^\ast \alpha_1)
	=\pi_\ast (\varpi_2^\ast \alpha_2 \wedge \varpi_1^\ast \alpha_1).
	\end{multline*}
\end{proof}

\subsection{The cohomology class, proof of statement \eqref{forsta} in 
  Theorem ~\ref{thmA}}\label{forstabevis}

Note in view of \eqref{koko} that to prove statement \eqref{forsta} in
Theorem ~\ref{thmA}
it is enough to prove that $s_{k_t}(E,h,\theta)\wedge\cdots\wedge
s_{k_1}(E,h,\theta)$ is cohomologous to $s_{k_t}(E,g) \wedge\cdots\wedge
s_{k_1}(E,g)$, where $g$ is a smooth metric on $E$.

From Proposition ~\ref{dova} we have that 
\begin{multline*}
s_{k_t}(E,h,\theta)\wedge\cdots\wedge s_{k_1}(E,h,\theta)
=
(-1)^k\pi_* \big ([dd^c\varphi_t]^{k_t+r-1}_{\theta_t}\wedge\cdots\wedge
[dd^c\varphi_1]^{k_1+r-1}_{\theta_1} \big )
= \\
(-1)^k\pi_* \big ( \theta_t^{k_t+r-1}\wedge\cdots\wedge \theta_1^{k_1+r-1} \big ) +
(-1)^kdd^c \pi_* S,
\end{multline*}
for some current $S$; here we have used the notation from Section ~\ref{plommon}.

By applying Lemma \ref{forwards} to $\tilde \theta_j^{k_j+r-1}$, noting
that $\theta_j^{k_j+r-1}=\varpi_j^*\tilde \theta_j^{k_j+r-1}$, we get that 
\begin{equation}\label{hosta}
\pi_* \big ( \theta_t^{k_t+r-1}\wedge\cdots\wedge \theta_1^{k_1+r-1} \big )
=
(\pi_t)_*\tilde\theta_t^{k_t+r-1}\wedge\cdots\wedge (\pi_1)_*\tilde\theta_1^{k_1+r-1}.
\end{equation}
Let $\eta$ be the metric on $\Ok_{\P(E)}(1)$ associated with $g$. Then
$\theta$ is cohomologous to $dd^c\eta$ and since 
$\pi_*$ commutes
with exterior differentiation, it follows from \eqref{ny} that
$(-1)^{k_j}(\pi_j)_*\tilde\theta_j^{k_j+r-1}$ is a form in the class of
$s_{k_j}(E, g)$.
It follows that $(-1)^k$ times right hand side of \eqref{hosta} is
cohomologous to $s_{k_t}(E,g) \wedge\cdots\wedge s_{k_1}(E,g)$, and
we conclude that so is $s_{k_t}(E,h,\theta)\wedge\cdots\wedge
s_{k_1}(E,h,\theta)$.

\subsection{Lelong numbers, proof of statement \eqref{tredje} in
  Theorem ~\ref{thmA}}\label{lelongsection} 

We begin by recalling the definition of the Lelong number of a closed positive current.
We assume that we are on a complex manifold $X$ of dimension $n$ and that around a given point
$a \in X$, we have local coordinates $z$.
If $T$ is a closed positive $(p,p)$-current on $X$, then the \emph{Lelong number} of $T$ at $a$ can be defined
as
\begin{equation}\label{kalsongdef}
  \nu(T,a) := \int {\mathbf{1}_{\{a\}}}  (dd^c \log
  |z-a|^2)^{n-p}\wedge T,
\end{equation}
which is independent of the local coordinate system,
see for example \cite{Dem}*{Definition~III.5.4 and Corollary III.7.2}.
Since $L(\log |z-a|^2) = \{ a \}$, which has codimension $n$,
the product in the integrand is indeed well-defined.

Note that the definition of Lelong numbers can be extended to currents
that are locally 
of the form $T_+ - T_-$, where $T_\pm$ are closed positive currents,
through $\nu(T,a) := \nu(T_+,a) - \nu(T_-,a)$ if $T=T_+-T_-$ in a
neighborhood of $a$.
In particular, in view of Lemma ~\ref{institution} the Lelong numbers are defined for the currents
$s_{k_t}(E, h,\theta)\wedge\cdots\wedge s_{k_1}(E,h,\theta)$, and thus
in particular for $c_k(E,h,\theta)$.

\begin{remark}\label{lelongkalsong}
Let us consider \eqref{kalsongdef}. For simplicity, assume that
$a=0$. 
Note that by the dimension principle 
for any $(p,p)$-current $T$ that is (locally) the difference of two
closed positive currents, 
\begin{equation*} 
(dd^c \log |z|^2)^{n-p} \wedge T
= dd^c \log |z|^2 \1_{X\setminus
  \{0\}} \wedge \cdots \wedge dd^c \log |z|^2 \mathbf{1}_{X\setminus
  \{0\}} \wedge T,
\end{equation*}
cf.\ Proposition ~\ref{propB}. 

Now assume that $T=s_{k_t}(E,h,\theta)\wedge\cdots\wedge
s_{k_1}(E,h,\theta)$, i.e., $T=(-1)^k \pi_*\mu$, where 
\[
\mu=[dd^c\varphi_t]^{k_t+r-1}_{\theta_t}\wedge \cdots\wedge 
[dd^c\varphi_1]^{k_1+r-1}_{\theta_1}
\]
and we are using the notation from Section ~\ref{plommon}. 
Notice that $\log|\pi^* z|^2$ is psh with analytic singularities on $Y$
with unbounded locus $Z:=\pi^{-1}\{0\}$. 
Thus, in view of Lemma ~\ref{filmfestival}, 
arguing as in  the proof of Lemma ~\ref{institution} 
(regarding $\log|\pi^* z|^2$ as a metric on the trivial line bundle
over $Y$), one gets that 
\[
dd^c \log |\pi^* z|^2 \1_{Y\setminus Z} \wedge \cdots \wedge dd^c \log
|\pi^* z|^2 \1_{Y\setminus Z} \wedge \mu 
\]
is a globally defined current 
that in a neighborhood of $Z$ is the difference of two closed positive
currents with analytic singularities.

Next, note that if $u^{(\iota)}$ is a sequence of smooth psh
functions decreasing to $\log|z|^2$, then $\pi^* u^{(\iota)}$ is a
sequence of smooth psh functions decreasing to $\log |\pi^*z|^2$. 
Using \eqref{daghem}, Propositions ~\ref{addis} and ~\ref{liseberg}, and \eqref{bla} we conclude that 
\[
\1_{\{0\}}(dd^c \log |z|^2)^{n-k} \wedge T 
= 
\pi_* \big (\1_Z dd^c \log |\pi^* z|^2 \1_{Y\setminus Z} \wedge\cdots
\wedge dd^c \log
|\pi^* z|^2\1_{Y\setminus Z} \wedge \mu \big ).
\]
\end{remark}

\begin{proof}[Proof of statement \eqref{tredje} in Theorem \ref{thmA}]
Let us choose coordinates so that $a=0$. Since Lelong numbers are locally defined, cf.\ \eqref{kalsongdef}, we
may assume that we are in a neighborhood $0\in\U\subset X$ as in the
proof of Lemma ~\ref{institution}. 
Let $\theta$ and $\theta'$ be two first Chern forms on
$\Ok_{\P(E)}(1)$ corresponding to smooth metrics $\psi$ and $\psi'$, respectively, and let  
$\mu=[dd^c\varphi_t]^{k_t+r-1}_{\theta_t}\wedge \cdots\wedge
[dd^c\varphi_1]^{k_1+r-1}_{\theta_1}$ and
$\mu'=[dd^c\varphi_t]^{k_t+r-1}_{\theta'_t}\wedge \cdots\wedge
[dd^c\varphi_1]^{k_1+r-1}_{\theta'_1}$ be the corresponding currents
on $Y$, where we use the notation from Section ~\ref{plommon} and
$\theta'_j$ is defined analogously to $\theta_j$. 

We claim that in $\pi^{-1}\U$
\begin{equation} \label{tennis}
\mu-\mu'=\sum d\beta_i \wedge \mu_i,
\end{equation}
where $\beta_i$ are smooth forms and $\mu_i$ are closed positive 
currents with analytic singularities. 
Using the notation from Remark ~\ref{lelongkalsong}, let $S\wedge T$
denote the operator 
$T\mapsto \1_Z dd^c \log
|\pi^* z|^2\1_{Y\setminus Z}\wedge \cdots \wedge dd^c \log
|\pi^* z|^2 \1_{Y\setminus Z} \wedge T$.
Since $\beta_i$ is smooth, applying $S$ commutes with multiplication
with $d\beta_i$, and thus, since $S\wedge \mu_i$ is closed, we get 
\begin{equation*}
    S \wedge d\beta_i \wedge \mu_i = d\beta_i \wedge S \wedge \mu_i =
    d(\beta_i \wedge S \wedge \mu_i)=:d \tau_i, 
\end{equation*}
where $\tau_i =\beta_i \wedge S \wedge \mu_i$ 
has support on $Z$.

Hence, in view of Remark ~\ref{lelongkalsong}, taking the claim for granted, 
\begin{multline*}
(-1)^k \Big (\nu\big (s_{k_t}(E,h,\theta)\wedge \cdots\wedge
s_{k_1}(E,h,\theta), 0\big )
-
\nu\big (s_{k_t}(E,h,\theta')\wedge \cdots\wedge
s_{k_1}(E,h,\theta'), 0\big )\Big )
=\\
\nu(\pi_* \mu,0) - \nu(\pi_* \mu',0) = \int \pi_*( S \wedge (\mu-\mu'))
= \sum_i \int d\pi_* \tau_i = 0,
\end{multline*}
where the last equality follows by Stokes' theorem since the $\pi_*\tau_i$ have support
on $\pi (Z)=\{0\}$. 
This proves \eqref{tredje} in Theorem~\ref{thmA}. 

\smallskip

It remains to prove the claim. 
First note that, since 
$[dd^c\varphi_1]^0_{\theta_1}=
[dd^c\varphi_1]^0_{\theta'_1}=1$,  $[dd^c\varphi_1]^0_{\theta_1}-
[dd^c\varphi_1]^0_{\theta'_1}$
vanishes and is in particular of the form \eqref{tennis}. 
Next assume that we have proven that 
\[T-T':=
[dd^c\varphi_\kappa]^{m_\kappa-1}_{\theta_\kappa}\wedge \cdots \wedge
[dd^c\varphi_1]^{m_1}_{\theta_1} 
-
[dd^c\varphi_\kappa]^{m_\kappa-1}_{\theta'_\kappa}\wedge \cdots \wedge
[dd^c\varphi_1]^{m_1}_{\theta'_1} 
= \sum_i d\gamma_i \wedge T_i
\]
for some smooth forms $\gamma_i$ and closed positive currents with
analytic singularities $T_i$, where $m_\kappa\geq 1$.

By the assumption on $\U$,  
$T=T_+-T_-$ in $\pi^{-1}\U$, where $T_\pm$ are closed positive currents with analytic
singularities. 
Now 
\begin{multline*}
[dd^c\varphi_\kappa]^{m_\kappa}_{\theta_\kappa}\wedge \cdots \wedge
[dd^c\varphi_1]^{m_1}_{\theta_1} 
-
[dd^c\varphi_\kappa]^{m_\kappa}_{\theta'_\kappa}\wedge \cdots \wedge
[dd^c\varphi_1]^{m_1}_{\theta'_1} 
=
[dd^c\varphi_\kappa]_{\theta_\kappa}\wedge T -
[dd^c\varphi_\kappa]_{\theta'_\kappa}\wedge T'
=\\
 dd^c\varphi_\kappa \wedge \1_{Y\setminus Z_\kappa} (T-T') + 
(\theta_\kappa -\theta'_\kappa) \wedge \1_{Z_\kappa} T+ 
\theta'_\kappa \wedge \1_{Z_\kappa} (T-T')
=\\
\sum_i  d\gamma_i \wedge  dd^c\varphi_\kappa \wedge \1_{Y\setminus Z_\kappa} 
 T_i + 
dd^c (\psi_\kappa-\psi'_\kappa)  \wedge \1_{Z_\kappa} (T_+-T_-) + 
\sum_i d (\gamma_i \wedge dd^c \psi'_\kappa) \wedge \1_{Z_\kappa} T_i, 
\end{multline*}
which is of the form in the right hand side of \eqref{tennis} since $\psi_\kappa$ and
$\psi'_\kappa$ are smooth. 
Here we have used that set of closed positive currents with analytic
singularities is closed under multiplication with $\1_U$, where $U$ is
an constructible set and Remark \ref{stress}. 
The claim now follows by induction. 
\end{proof}

\section{Comparison with \cite{LRRS}, 
Proof of Theorem ~\ref{thmB}}\label{jamfora}

Assume that $h$ is a singular Griffiths positive (negative) metric on
a holomorphic vector
bundle $E\to X$ over a complex manifold $X$ of dimension $n$, such that 
that the degeneracy locus of $h$ 
is contained in a variety $V\subset X$. 
In \cite{LRRS} the first three authors together with Ruppenthal 
defined 
Chern and Segre currents, $c_k(E,h)$ and $s_k(E,h)$, for $k\leq \codim
V$. Let us briefly recall the construction. 
Locally, $h$ can be approximated by an increasing (decreasing) sequence
$h_\eps$ of Griffiths positive (negative) smooth metrics, see, e.g.,
\cite[Proposition~3.1]{BP} or \cite[Proposition~1.3]{R}. 
Theorem~1.11 in \cite{LRRS} asserts that the iterated 
limit\footnote{In \cite{LRRS} the limit is taken over certain subsequences
  of $h_\varepsilon$, but this is in fact not necessary; see the end
  of the proof of Proposition ~4.6 in \cite{LRRS}.}
\begin{equation}\label{iterated}
\lim_{\varepsilon_t \to 0} \cdots \lim_{\varepsilon_1 \to 0} s_{k_t}(E,h_{\varepsilon_t}) \w \cdots \w s_{k_1}(E,h_{\varepsilon_1})
\end{equation}
exists as a current and is independent of the choice of $h_\varepsilon$
for $k_{1}+\cdots + k_t\leq \codim V$; in particular, it follows that 
$s_{k_t}(E,h)\wedge\cdots\wedge s_{k_1}(E,h)$, locally given as
\eqref{iterated}, defines a global current on $X$. 
Moreover, the Chern currents $c_k(E,h)$, defined from
$s_{k_t}(E,h)\wedge\cdots\wedge s_{k_1}(E,h)$ analogously to \eqref{koko}, and the Segre currents $s_k(E,h)$ coincide with
the corresponding Chern and Segre forms where $h$ is smooth, and are
in the classes $c_k(E)$ and $s_k(E)$, respectively, 
when $X$ is compact.

Assume that $h$ is Griffiths positive and let $\varphi_\eps$ be the smooth metric
on $\pi:\P(E)\to X$ induced by $h_\eps$. Then $\varphi_\eps$ is a sequence of smooth positive metrics on $\Ok_{\P(E)}(1)$ decreasing to
$\varphi$. Let $\omega_\eps$ be the first Chern form
of $\varphi_\eps$. Then $s_{k_t}(E,h)\wedge\cdots\wedge
s_{k_1}(E,h)$ satisfies the following recursion for $t\geq 0$:
\begin{equation}\label{slowgold}
s_{k_t}(E,h)\wedge\cdots\wedge s_{k_1}(E,h) 
=
\lim_{\eps  \to 0} (-1)^{k_t}\pi_*(\omega_{\eps}^{k_t+r-1}) \wedge
s_{k_{t-1}}(E,h)\wedge\cdots\wedge s_{k_1}(E,h).
\end{equation}

\begin{remark}\label{konstmuseum}
Assume that $h$ is Griffiths positive. 
Let us use
the notation from Section ~\ref{plommon}, and denote the sequences of positive metrics on $Y$
induced by $h_{\eps}$ by $\varphi_{j,\eps}$. 
Moreover assume that we are outside the degeneracy locus of $h$. Then the
induced $\varphi_j$ are locally bounded and thus, by Proposition
~\ref{addis}, 
\begin{multline*}
(-1)^k s_{k_t}(E,h)\wedge\cdots\wedge s_{k_1}(E,h) 
=
\lim_{\varepsilon_t \to 0} \cdots \lim_{\varepsilon_1 \to 0} \
\pi_* \big ( (dd^c\varphi_{t,\eps_t})^{k_t+r-1}\wedge \cdots \wedge
(dd^c\varphi_{1,\eps_1})^{k_1+r-1} \big ) 
\\=
\pi_* \big ( (dd^c\varphi_{t})^{k_t+r-1}\wedge \cdots \wedge
(dd^c\varphi_{1})^{k_1+r-1} \big ).
\end{multline*}
\end{remark}

To prove Theorem ~\ref{thmB} we need to recall some auxiliary results from
\cite{LRRS}. 
First, following \cite{LRRS} we say that a smooth $(n-k,n-k)$-form
$\beta$ is a \emph{bump form} at a point $x\in X$ if it is strongly positive, and such that for some (or equivalently for any) K\"ahler form $\omega$ defined near $x$, there exists a constant $C > 0$ such that $C\omega^{n-k} \leq \beta$ as strongly positive forms in a neighborhood of $x$.

\begin{lma}\label{bump}
Let $V\subset X$ be a subvariety. 
Then for each $k\leq \codim V$ and each point $x\in V$, there
exists a bump form $\beta$ at $x$ of bidegree $(n-k,n-k)$ with
arbitrarily small support such that $dd^c\beta$ has support in
$X\setminus V$.
\end{lma}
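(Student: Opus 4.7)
My plan is to construct $\beta$ locally by summing directional bump forms adapted to $V$. I would work in local coordinates $(z_1,\dots,z_n)$ centered at $x=0$ and take $\omega:=\sum_{j=1}^n i\,dz_j\wedge d\bar z_j$ as the standard K\"ahler form. Since $\codim V\ge k$ and hence $\dim V\le n-k$, for a generic linear $k$-dimensional subspace $L$ through $0$ the intersection $V\cap L$ has dimension $\dim V+k-n\le 0$ and therefore reduces to $\{0\}$ locally. By a generic unitary change of coordinates (which preserves $\omega$) I can arrange this simultaneously for all the finitely many subspaces $\{z_i=0,\ i\in I\}$ as $I$ ranges over subsets of $\{1,\dots,n\}$ of size $n-k$; equivalently, every projection $\pi^I\colon V\to\C^{n-k}$, $z\mapsto z^I:=(z_i)_{i\in I}$, is finite near $0$. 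By the finiteness of each $\pi^I|_V$ and continuity, for each $I$ I can then choose radii $\delta_1^I,\dots,\delta_n^I>0$, all arbitrarily small, such that
\[
V\cap\{|z_j|\le\delta_j^I,\ j=1,\dots,n\}\subset\{|z_i|\le\delta_i^I/2 \text{ for every } i\in I^c\},
\]
which expresses that the branch of $V$ through $0$ is ``thin'' in the $z^{I^c}$-directions inside the adapted polydisk.

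Next I would define the directional pieces. Fix once and for all a smooth cutoff $\lambda\colon\R_{\ge 0}\to[0,1]$ with $\lambda(t)=1$ for $t\le 3/4$ and $\lambda(t)=0$ for $t\ge 1$. For each $I$ set
\[
\chi^I(z):=\prod_{j=1}^n\lambda\!\bigl(|z_j|^2/(\delta_j^I)^2\bigr),\qquad
\beta_0^I:=\bigwedge_{i\in I} i\,dz_i\wedge d\bar z_i,\qquad
\beta^I:=\chi^I\beta_0^I.
\]
Since $\beta_0^I$ has constant coefficients it is $d$-closed, so $dd^c\beta^I=dd^c\chi^I\wedge\beta_0^I$. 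Wedging with $\beta_0^I$ annihilates any $dz_i$ or $d\bar z_j$ with $i$ or $j$ in $I$, so only the coefficients $\chi^I_{i\bar j}$ with $i,j\in I^c$ contribute. The product structure of $\chi^I$ reduces these coefficients to expressions involving $\lambda'$ or $\lambda''$ of $|z_i|^2/(\delta_i^I)^2$ for some $i\in I^c$; both derivatives vanish on $\{|z_i|\le\delta_i^I/2\}$, and by the slice inclusion above this set contains a neighborhood of $V\cap\supp\chi^I$. Hence $\supp(dd^c\beta^I)$ is disjoint from $V$.

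Finally, set $\beta:=\sum_I\beta^I$, summed over all $I$ with $|I|=n-k$. Each $\beta^I$ is strongly positive and compactly supported in an arbitrarily small polydisk at $x$, so $\beta$ is strongly positive with support in any prescribed neighborhood of $x$. Near $0$ every $\chi^I$ equals $1$, and the multinomial expansion of $\omega^{n-k}$ gives $\sum_I\beta_0^I=\omega^{n-k}/(n-k)!$, so $\beta\ge ((n-k)!)^{-1}\omega^{n-k}$ there, verifying the bump-form condition. Moreover $\supp(dd^c\beta)\subset\bigcup_I\supp(dd^c\beta^I)\subset X\setminus V$. The main subtlety is the slice estimate underlying the second paragraph: it requires a generic coordinate choice that makes $\pi^I|_V$ finite for \emph{every} $I$ simultaneously, together with the freedom to choose the radii $\delta_j^I$ separately for each $I$ (so that the resulting directional cutoffs are constant on a neighborhood of $V$ in each adapted polydisk). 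This is precisely where the hypothesis $\codim V\ge k$ enters; were the codimension smaller, at least one $\pi^I|_V$ would generically have positive-dimensional fibers and the transverse thinness of $V$ would fail.
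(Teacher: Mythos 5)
Your construction is correct and is essentially the same as the paper's, which simply imports the bump form of \cite{LRRS}*{Lemma 4.3} (a sum of terms $\chi_1\chi_2\beta_0$ with $\beta_0$ of full degree in the directions along which $V$ projects finitely, and the transverse cutoff constant near $V$ inside the support) and then observes that $d(\chi_1\chi_2\beta_0)=\chi_1\,d\chi_2\wedge\beta_0$ is supported off $V$. Your argument just makes that construction explicit via generic coordinate subspaces and the local slice estimate, and the verification of $\supp dd^c\beta\subset X\setminus V$ is the same degree-counting mechanism.
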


\begin{proof}
We construct the bump form $\beta$ as in the proof of Lemma~4.3 in
\cite{LRRS} (with $k$ equal to $k+q$ in that proof).
By that proof, one may write $\beta$ as a sum of terms, such that each term
in some local coordinate system $(z',z'') \in \C^{n-k} \times \C^k$ is
of the form $\chi_1 \chi_2 \beta_0,$
where $\beta_0 = ~idz'_1 \wedge d\bar{z}'_1 \wedge \cdots \wedge i dz'_{n-k} \wedge d\bar z'_{n-k}$
and $\chi_1$ and $\chi_2$ are cutoff functions in the variables $z'$
and $z''$, respectively, such that 
$\chi_2$ is constant in a neighborhood of $(\supp \chi_1\chi_2) \cap
V$. 
It then suffices to prove that $d(\chi_1 \chi_2 \beta_0)$ has support in $X \setminus V$. This holds
since $\beta_0$ has full degree in the $z'$-variables so that
$d(\chi_1 \chi_2 \beta_0) = \chi_1 d\chi_2 \wedge \beta_0$, and
$\chi_1 d\chi_2$ has support in the set 
where $\chi_1 \chi_2\not\equiv 0$ and $\chi_2$ is not constant, which is contained in $X \setminus V$.
\end{proof}

The next result is Lemma~4.5 in \cite{LRRS}. 
\begin{lma}
\label{flopp}
Let $S$ and $T$ be two closed, positive $(k,k)$-currents on $X$ such that $S = T$ outside a subvariety $V$ with $\codim V\geq k$, and assume that for each point of $x \in V$, there exists an $(n-k,n-k)$ bump form $\beta$ at $x$ with arbitrarily small support such that
\begin{equation*}
    \int_X S \wedge \beta = \int_X T \wedge \beta.
\end{equation*}
Then $S = T$ everywhere.
\end{lma}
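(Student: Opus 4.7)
The plan is to reduce the problem to showing $R := S - T \equiv 0$, and to establish that $R$ must be a real linear combination of the currents of integration along the codimension-$k$ irreducible components of $V$; the bump form hypothesis will then force every coefficient to vanish. Observe that $R$ is a closed normal $(k,k)$-current (as a difference of closed positive currents) and, since $S$ and $T$ coincide outside $V$, its support lies in $V$.

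First I would apply Siu's decomposition theorem separately to $S$ and $T$. For $S$ this gives
\[
S = \sum_j \nu_j^S [W_j] + S',
\]
where the $W_j$ are irreducible codimension-$k$ subvarieties along which $S$ has positive generic Lelong number and $S'$ is a closed positive $(k,k)$-current whose generic Lelong number along every codimension-$k$ subvariety vanishes. Since $\supp S \subset V$, each $W_j$ is contained in $V$, and being irreducible of codimension $k$ it must coincide with one of the codimension-$k$ irreducible components $V_1,\ldots,V_N$ of $V$. It remains to check that $S' = 0$. The residual $S'$ is still supported in $V$; near a smooth point $x$ of any $V_j$ not lying on any other codimension-$k$ component, the classical structure theorem for closed positive $(k,k)$-currents supported on a smooth submanifold of codimension $k$ gives $S' = c\,[V_j]$ locally, with $c \geq 0$ equal to the Lelong number of $S'$ at $x$. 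By construction, the generic Lelong number of $S'$ along $V_j$ is zero, so $c = 0$ on a dense open subset of each $V_j$; hence $\supp S'$ is contained in a subvariety of codimension strictly greater than $k$, and the dimension principle yields $S' = 0$. The same argument applies to $T$, and subtracting,
\[
R = \sum_{j=1}^N c_j [V_j], \qquad c_j := \nu_j^S - \nu_j^T \in \mathbb{R}.
\]

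Now I would invoke the bump form hypothesis. Fix $j$ and choose a smooth point $x \in V_j$ not lying on any other $V_i$, $i \neq j$ (such a point exists by the irreducibility of $V_j$ and the fact that $V_j \cap V_i$ is a proper subvariety of $V_j$ for $i \neq j$). By hypothesis there is a bump form $\beta$ at $x$ of bidegree $(n-k,n-k)$, with support arbitrarily small around $x$, satisfying $\int_X R \wedge \beta = 0$. Choosing the support of $\beta$ small enough to avoid $\bigcup_{i \neq j} V_i$, we get
\[
0 = \int_X R \wedge \beta = c_j \int_{V_j} \beta.
\]
The bump condition gives $\beta \geq C\,\omega^{n-k}$ near $x$ for some K\"ahler form $\omega$ and constant $C > 0$, and since $V_j$ is smooth of complex dimension $n-k$ near $x$, the restriction $\omega^{n-k}|_{V_j}$ is a positive volume form; hence $\int_{V_j} \beta > 0$. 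Therefore $c_j = 0$ for every $j$, so $R = 0$ and $S = T$.

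The main obstacle is the structural step in the second paragraph: showing that the Siu residuals $S'$ and $T'$ vanish. This rests on the local structure theorem identifying, near a generic smooth point of $V_j$, any closed positive $(k,k)$-current supported on $V_j$ with a nonnegative multiple of $[V_j]$, combined with the dimension principle applied to what remains. Once this reduction is in hand, the bump form hypothesis disposes of the coefficients in an essentially one-line calculation.
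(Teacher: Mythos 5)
Your overall strategy---reduce to showing that $R:=S-T$ is a linear combination $\sum_j c_j[V_j]$ over the codimension-$k$ irreducible components of $V$, and then kill each coefficient with a bump form localized near a generic point of $V_j$---is the right one; the paper itself does not prove this lemma (it cites Lemma~4.5 of \cite{LRRS}), but it runs exactly this kind of argument in Section~\ref{losi} via the support theorem \cite[Corollary~III.2.14]{Dem}. Your third paragraph (the bump-form computation) is correct as written.

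The second paragraph, however, contains a genuine error. You assert ``Since $\supp S\subset V$, each $W_j$ is contained in $V$,'' and later that ``the residual $S'$ is still supported in $V$.'' Neither $S$ nor $T$ is assumed to be supported in $V$; the hypothesis is only that $S=T$ \emph{outside} $V$. (Take $S=T=\omega^k$ for a K\"ahler form $\omega$: all hypotheses hold, yet $\supp S=X$ and the Siu residual of $S$ is $S\neq 0$.) So applying Siu's decomposition to $S$ and $T$ separately does not yield $R=\sum c_j[V_j]$, and the vanishing argument for $S'$ collapses. The repair is to drop the Siu detour and apply the structure theorem directly to $R$: it is a closed normal $(k,k)$-current (being a difference of closed positive currents) with $\supp R\subset V$ and $\codim V\geq k$, so the components of $V$ of codimension $>k$ contribute nothing by the first support theorem, and \cite[Corollary~III.2.14]{Dem} gives $R=\sum_j c_j[V_j]$ with real constants $c_j$, the sum running over the codimension-$k$ components. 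With that substitution, your concluding paragraph finishes the proof.
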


\begin{proof}[Proof of Theorem~\ref{thmB}]

We will proceed by induction. 
Let us first choose $t\geq 2$ and assume that we have proved that 
\begin{equation}\label{dusa}
s_{k_{t-1}}(E,h,\theta)\wedge\cdots\wedge s_{k_{1}}(E,h,\theta)
=
s_{k_{t-1}}(E,h)\wedge\cdots\wedge s_{k_{1}}(E,h). 
\end{equation}
Let us use the notation from Section ~\ref{plommon}.  
Moreover, let $Y'$ be the fiber product 
\[
Y'=\P(E_{t-1})\times_X\cdots \times_X \P(E_1),
\]
with projections $\varpi'_j: Y'\to \P(E_j)$, $\pi': Y'\to X$, and
$p:Y\to Y'$. Then $Y=\P(E_t) \times_X Y'$. 

Let $\varphi'_j$ denote the pullback metric $(\varpi'_j)^*\tilde
\varphi_j$ on $L'_j:=(\varpi'_j)^* L_j$ and let
$\theta'_j=(\varpi'_j)^*\tilde\theta_j$. 
Let $\tilde \varphi_\epsilon$ denote the metric on $\widetilde L_t$ induced
by 
$h_\eps$, let $\varphi_\eps$ denote
the pullback $\varpi_t^*\tilde\varphi_\eps$ to $Y$, and let $\tilde\omega_\eps$ and $\omega_\eps$
denote the corresponding first Chern forms. 
Let 
\[
\mu' = 
[dd^c\varphi'_{t-1}]^{k_{t-1}+r-1}_{\theta'_{t-1}}\wedge \cdots \wedge
[dd^c\varphi'_1]^{k_1+r-1}_{\theta'_1} 
\]
and let $\mu=p^*\mu'$; by regularization 
\[
\mu=[dd^c\varphi_{t-1}]^{k_{t-1}+r-1}_{\theta_{t-1}}\wedge \cdots \wedge
[dd^c\varphi_1]^{k_1+r-1}_{\theta_1}. 
\]

Now, using the induction hypothesis
\eqref{dusa} and Lemma ~\ref{forwards} we can rewrite \eqref{slowgold} as 
\begin{equation*}
s_{k_t}(E,h)\wedge\cdots\wedge s_{k_1}(E,h) 
=
(-1)^k \lim_{\eps  \to 0} (\pi_t)_* \tilde\omega_{\eps}^{k_t+r-1} \wedge
\pi'_* \mu'
=
(-1)^k \lim_{\eps  \to 0} \pi_*(\omega_{\eps}^{k_t+r-1} \wedge \mu). 
\end{equation*}
Moreover, 
\[
s_{k_t}(E,h, \theta)\wedge\cdots\wedge s_{k_1}(E,h, \theta) 
=
(-1)^k\pi_* \big ( [dd^c\varphi_t]^{k_t+r-1}_{\theta_t} \wedge \mu \big ).
\]

Since $k\leq \codim V$, by Lemma ~\ref{bump}, for each $x\in V$ there is a bump
form $\beta$ at $x$ of bidegree $(n-k,n-k)$ with arbitrarily small support such
that $dd^c\beta$ vanishes in a neighborhood of $V$. 
Note that $\pi_t(L(\tilde \varphi_t))\subset V$ in view of Lemma
~\ref{annandag}. It follows that 
\[
Z_t =\varpi_t^{-1} L(\tilde \varphi_t)\subset 
\varpi_t^{-1} \pi_t^{-1} V =
\pi^{-1} V 
\]
and thus $dd^c\pi^*\beta$ vanishes in a neighborhood of
$Z_t\subset Y$. 
Hence, by Lemma ~\ref{medusa} (applied to $T=\mu$) 
\begin{multline*}
\int_X s_{k_t}(E,h, \theta)\wedge\cdots\wedge s_{k_1}(E,h, \theta)
\wedge \beta
=
(-1)^k \int_Y
[dd^c\varphi_t]^{k_t+r-1}_{\theta_t} \wedge \mu \wedge \pi^* \beta
=\\
(-1)^k\lim_{\eps  \to 0} \int_Y  \omega_{\eps}^{k_t+r-1} \wedge \mu \wedge \pi^* \beta
=
\int_X s_{k_t}(E,h)\wedge\cdots\wedge s_{k_1}(E,h)
\wedge \beta. 
\end{multline*}
In view of Remarks ~\ref{bibliotek} and ~\ref{konstmuseum},
\eqref{jason} holds outside $V$, and thus by Lemma ~\ref{flopp} it holds
everywhere.

\smallskip 
It remains to prove \eqref{jason} for $t=1$. This follows, in fact, by
an easier version of the argument above. If $\beta$ is a bump form as
above, then by Lemma ~\ref{medusa} (with $T=1$)
\begin{multline*}
\int_X s_{k_1}(E,h, \theta)
\wedge \beta
=
(-1)^{k_1}\int_{\P(E_1)}
[dd^c\varphi_1]^{k_1+r-1}_{\theta} \wedge \pi_1^* \beta
=\\
(-1)^{k_1}\lim_{\eps  \to 0} \int_{\P(E_1)} \omega_{\eps}^{k_1+r-1} \wedge \pi_1^* \beta
=
\int_X s_{k_1}(E,h)
\wedge \beta
\end{multline*}
and again \eqref{jason} follows from Lemma ~\ref{flopp}. 
\end{proof}

\section{Remarks and examples}\label{losi}
Let us start by discussing the uniqueness of the Chern and Segre
currents. 
Assume that $X$ is a complex manifold and that $V\subset X$ is a subvariety of pure 
codimension $p$. 
Moreover assume that 
$T_1$ and $T_2$ are closed positive $(p,p)$-currents on $X$ that
coincide outside $V$, and that the
Lelong numbers of $T_1$ and $T_2$ coincide at each $x\in V$. We claim
that then $T_1=T_2$. 
Indeed, since $T:=T_1-T_2$ is a closed normal $(p,p)$-current
with support on $V$ it follows that $T=\sum a_j [V_j]$, where $V_j$ are the
irreducible components of $V$, see, e.g.,
\cite[Corollary~III.2.14]{Dem}. Next, by assumption the Lelong number of $T$ at each
point in $V$ is zero and therefore $a_j=0$ for each $j$. 
If $T_1$ and $T_2$ are closed positive $(k,k)$-currents that coincide
outside $V$, where $k<p$, then $\1_VT_j$ vanishes for
$j=1,2$ by the dimension principle, and hence $T_1=T_2$.

Now assume that we are in the situation of Theorem \ref{thmA} and that
$L(\log \det h^*)\subset V$. Then by Remark
\ref{bibliotek}, $c_k(E, h, \theta)$ and $s_k(E,h,\theta)$ are
independent of $\theta$ outside $V$. Since 
they are of bidegree $(k,k)$ and (locally) differences of
closed positive currents it follows in view of \eqref{tredje} that
they are independent of $\theta$ for $k\leq p$. 
Note that if $h$ is smooth outside $V$ then $c_k(E, h, \theta)$ and $s_k(E, h,
\theta)$ are uniquely determined by the condition \eqref{andra} for
$k<p$.

On the other hand if $k>p$, let $\alpha$ and $\beta$ be real smooth
forms of bidegree $(k-p, k-p)$ such that $\alpha-\beta$ is exact. 
Then $(\alpha-\beta)\wedge [V]\neq 0$
has zero Lelong numbers everywhere, is cohomologous to zero, and vanishes
outside $V$. 
Thus there is no reason to expect $c_k(E,h,\theta)$ and $s_k(E,h,\theta)$ to be
independent of $\theta$ for $k>p$ in general.

\smallskip

Let us consider some simple examples, where we can compute the Segre and
Chern currents explicitly.

\begin{ex}\label{kass}
Let $L\to X$ be a line bundle and $e^{-\varphi}$ a Griffiths
positive metric with analytic singularities. Then $\Ok_{\P(L)}(1)=L$ and $e^{_-\varphi}=h$, and thus 
\begin{equation*}
s_k(L,h,\theta)=[dd^c\varphi]^k_\theta
=(dd^c \varphi)^k+\sum_{\ell=0}^{k-1} \theta^{k-\ell}\wedge \1_Z
(dd^c\varphi)^\ell, 
\end{equation*}
where $Z$ is the unbounded locus of $\varphi$. 
Classically, by the Bedford-Taylor-Demailly theory, for a general
$\varphi$, $(dd^c\varphi)^k$
is well-defined only for $k=1$; if $\varphi$ has analytic
singularities it is well-defined for 
$k\leq \codim Z=:p$. 

In fact, it is not hard to find
examples of psh functions $u$ with analytic singularities and
sequences $u^{(\iota)}$ of psh functions decreasing to $u$
where the corresponding sequences $(dd^c u^{(\iota)})^k$ converge to different positive
currents for $k>p$, see, e.g., \cite[Example~3.2]{ABW}. In particular,
this implies that the construction in \cite{LRRS} cannot extend to
$k>p$ in general. 
\end{ex}

\begin{ex}\label{linjalen}
Let $X=\P^n$, $L=\Ok_{\P^n}(1)$, and $h=e^{-\varphi}$, where
$\varphi=\log |s|^2$ and $s$ is
a non-trivial global holomorphic section of $L$, cf.\ Example ~\ref{fagel}. Then the unbounded
locus of $\varphi$ is the hyperplane $Z=\{s=0\}\subset \P^n$ and thus $(dd^c \varphi)^m$
is defined classically by the Bedford-Taylor-Demailly theory only for
$m=1$, cf.\ Example \ref{kass}. By the Poincar\'e-Lelong formula, 
$dd^c \varphi = [s=0]=[Z]$, cf.\
\cite[Example~2.2]{Dem2}.  It follows that 
\[
(dd^c \varphi)^2 = dd^c (\varphi \1_{X\setminus Z}dd^c\varphi) =0
\]
and thus $(dd^c \varphi)^m=0$ for all $m>1$. 
Hence, if $\theta$ is the first Chern form of a smooth metric on
$L$, then 
\[
[dd^c \varphi]^m_\theta=\theta^{m-1}\wedge [s=0]. 
\]

Since $L$ is a line bundle, $\P(L) = X$ and
$(\Ok_{\P(L)}(1),h) = (L,e^{-\varphi})$. 
Moreover, 
$Y= X$ and $\varphi_j=\varphi$
and $\theta_j=\theta$ for each $j$. Thus 
\begin{equation}\label{orange}
s_{k_t}(L,h,\theta) \wedge \cdots \wedge
s_{k_1}(L,h,\theta)
=
(-1)^k [dd^c\varphi]^k_\theta=
(-1)^k ~\theta^{k-1}\wedge [s=0]. 
\end{equation}
In particular, \eqref{orange} depends on 
$\theta$ as soon as $k>1$. In this case it is easy to see that the
Lelong numbers are independent of $\theta$, since $\theta$ is smooth. 

Note that $s_k(E,h)$ and $c_k(E,h)$ are
well-defined in the classical or \cite{LRRS} sense only for
$k\leq 1$; it holds that 
\[
c_1(E,h)=-s_1(E,h)=dd^c \varphi =[s=0].
\]
\end{ex}

\begin{ex}\label{lukt}
Let $E$ be a trivial rank $2$ bundle over $X=\C^2$ with coordinates
$x=(x_1,x_2)$ and let $h$ be the
singular metric $h = e^{-\log |x|^2} \Id$. 
In view of Section ~\ref{jensen}, in the open set $\U_i$ the induced metric
$\varphi$ on $\Ok_{\P(E)}(1)$ is given by 
\[
\varphi_i(x,[\xi])=\log|x|^2+\log |\xi/\xi_i|^2.
\]
In particular, it follows that $h$ is Griffiths positive with analytic
singularities. 

Since the unbounded locus of $\varphi$, $Z=\{x=0\}$, has codimension $2$
in $\P(E)$, $(dd^c\varphi)^m$ is classically well-defined for $m\leq
2$. Note that 
$dd^c\varphi = dd^c \log |x|^2 +\omega_{\text{FS}},$
where $\omega_{\text{FS}}$ is the Fubini-Study metric on the fibers
$\pi^{-1}(x)\cong \P^{1}_\xi$, 
and 
\begin{equation}\label{kaffegodis}
(dd^c\varphi)^2 = 
(dd^c \log |x|^2)^2 + 2\omega_{\text{FS}} \wedge dd^c \log |x|^2=
[x=0] + 2\omega_{\text{FS}} \wedge dd^c \log |x|^2 
\end{equation} 
since $\omega_{\text{FS}}^2$ vanishes for degree reasons. 
It follows that  
\begin{multline*}
(dd^c\varphi)^3 = dd^c (\varphi \1_{\P(E)\setminus Z}(dd^c\varphi)^2)
=
dd^c(\varphi ~ 2 \omega_{\text{FS}}\wedge dd^c \log |x|^2)
=\\
(\omega_{\text{FS}} +  dd^c \log |x|^2 )  \wedge (2 \omega_{\text{FS}}\wedge
dd^c \log |x|^2)
=
2\omega_{\text{FS}}\wedge [x=0],
\end{multline*}
where we have again used that $\omega_{\text{FS}}^2=0$. 
Thus if $\theta$ is the first Chern
form of a smooth metric on $\Ok_{\P(E)}(1)$, 
\[
[dd^c\varphi]^3_\theta = (dd^c \varphi)^3 +\theta\wedge \1_Z (dd^c
\varphi)^2 + \theta^2\wedge \1_Z dd^c \varphi
=
2\omega_{\text{FS}} \wedge [x=0] + \theta \wedge [x=0], 
\]
where the last term in the middle expression vanishes by the dimension
principle since $\codim Z=2$. 
Therefore 
\[
s_2(E,h,\theta)=\pi_*[dd^c\varphi]^3_\theta = 3 [0]. 
\]

In view of \eqref{kaffegodis}, $c_1(E,h, \theta)= - s_1(E,h, \theta)
=2 dd^c\log|x|^2$, and thus by \eqref{koko} we get that $c_2(E,
h, \theta)=[0]$. 
\end{ex}

A naive attempt would be to define Segre currents as the pushforward of $(dd^c\varphi)^{k+r-1}$ instead of
$[dd^c\varphi]_\theta^{k+r-1}$. Since $(dd^c\varphi)^m$ coincides with the
classical Bedford-Taylor-Demailly Monge-Amp\`ere product where $\varphi$ is locally
bounded, $\pi_*(dd^c\varphi)^{k+r-1}$ coincides with $s_k(E,h)$ where $h$
is smooth, cf.\ Lemma \ref{annandag}. 
Example \ref{linjalen}, however, shows that the current $\pi_*(dd^c
\varphi)^{k+r-1}$ is not in $s_k(E)$ in general; in that example
$(dd^c\varphi)^m=0$ for $m>1$, whereas $s_k(E)\neq 0$ for $0\leq k\leq
n$. Moreover, Example \ref{lukt} shows that the Lelong number of
$\pi_*(dd^c\varphi)^{k+r-1}$ is not equal to the Lelong number of
$s_k(E,h,\theta)$ in general. Indeed, 
note that in that example $\pi_*(dd^c\varphi)^3$ equals $2[0]$ and thus has Lelong number $2$ at the
origin, whereas the Lelong number at the origin of $s_2(E,h,\theta)$ is $3$. 

\smallskip 

The following example shows that the products \eqref{korsbar} of Segre currents are not
commutative in general.

\begin{ex}\label{martinex}
Let $X$ be the unit ball in $\C^3$ with coordinates $x=(z,\zeta_1,\zeta_2)$, and let
$E=X\times \C^2\to
X$ be the trivial vector bundle of rank $2$. 
Let $h$ be the singular hermitian metric on $E$ whose dual metric
$h^*$ on $E^*$ is given by the matrix 
$\begin{bmatrix} 0 & 0 \\ 0 & |z|^2 \end{bmatrix}$. Then in view
of Section ~\ref{jensen}, the induced metric $\varphi$ on
$\Ok_{\P(E)}(1)$ is given by 
$\varphi_1(x,[\xi])=\log|z|^2+\log |\xi_2/\xi_1|^2$ and
$\varphi_2=\log|z|^2$ in $\U_1$ and $\U_2$, respectively. It follows
that 
\[dd^c \varphi = dd^c(\log |z|^2 +\log|\xi_2|^2)=[Z]+[W],
\]
where $Z=\{z=0\}$ and $W=\{\xi_2=0\}$. 

Moreover let $g$ be the smooth metric on $E$ given by the matrix 
$\begin{bmatrix} 1 & |\zeta|^2 \\ |\zeta|^2 & 1\end{bmatrix}$. 
A computation yields that the
curvature form at $\zeta=0$ 
is $\Theta^g|_{\zeta=0}=\begin{bmatrix} 0 & \dbar \partial |\zeta|^2 \\
  \dbar \partial |\zeta|^2 & 0\end{bmatrix}$ so that 
$\frac{i}{2\pi}\Theta^g|_{\zeta=0}=-\begin{bmatrix} 0 & dd^c |\zeta|^2 \\
  dd^c |\zeta|^2 & 0\end{bmatrix}$. 
Thus at $\zeta=0$, in view of \eqref{identitet}, 
\begin{equation*}
s_1(E,g) = - c_1(E,g) =0, ~~~ c_2(E,g)= - (dd^c |\zeta|^2)^2, ~~~ s_2(E,g)= c_1(E,g)^2-c_2(E,g)=(dd^c |\zeta|^2)^2.
\end{equation*}

Let $\theta$ be the first Chern form of the smooth metric $\psi$ on $\Ok_{\P(E)}(1)$
induced by $g$. Then at $(x,[\xi])\in\P(E)$ 
\[
\theta = dd^c\psi = \omega^{g^*}_{\text{FS}}-
\frac{i}{2\pi|\xi|_g}\Theta^{g^*}_{\xi\overline \xi},
\] 
where $\Theta^{g^*}$ is the curvature form on $E_x^*$ and
$\omega^{g^*}_{\text{FS}}$ is the induced Fubini-Study metric on the fiber
$\pi^{-1}(x)=\P(E_x^*)\cong\P^1$, 
see, e.g., the beginning of the proof of Proposition~3.1 in \cite{G}
or the beginning of Section ~2 in \cite{Div}. 

Note that at $\zeta=0$, $g$ is just the standard Euclidean metric on $\C^2$, so that $\omega^{g^*}_{\text{FS}}$ is just the
standard Fubini-Study metric $\omega_{\text{FS}}$ on $\P^1$. Moreover,
$\Theta^{g^*}|_{\zeta=0}=-(\Theta^g)^T|_{\zeta=0}=-\begin{bmatrix} 0 & \dbar \partial |\zeta|^2 \\
  \dbar \partial |\zeta|^2 & 0\end{bmatrix}$, where $T$ denotes transpose. 
In particular, for $(x,[\xi])$ such that $\zeta =0$ and $\xi_2=0$,
$\Theta^{g^*}_{\xi\overline \xi}=0$. 
Hence at $\zeta=0$, 
\begin{equation}\label{nina}
\theta\wedge [W]=\omega_{\text{FS}}\wedge [W]=0,
\end{equation}
where the last equality follows for degree reasons. 
Therefore, for $m>1$, noting that $(dd^c\varphi)^m=0$,
$
[dd^c\varphi]^m_\theta=\theta^{m-1}\wedge \big
([Z]+[W])=\theta^{m-1}\wedge [Z]
$
at $\zeta=0$. 
More generally, let $E_1$ and $E_2$ be copies of $E$ and $Y=\P(E_1)\times_X\P(E_2)$,
and let us use the notation from Section ~\ref{plommon}. Then a
computation using \eqref{nina}, yields that for $m_1, m_2>1$, at $\zeta =0$, 
\[
[dd^c\varphi_2]^{m_2}_{\theta_2}\wedge
[dd^c\varphi_1]^{m_1}_{\theta_1}=
\theta_2^{m_2}\wedge \theta_1^{m_1-1}\wedge [Z]. 
\]
In view of \eqref{daghem2}, \eqref{rodhake}, and
\eqref{fiskmoll} 
it follows that 
\[
\pi_*\big (\theta_2^{k_2+1}\wedge \theta_1^{k_1+1}\wedge [Z]\big )
=(-1)^{k_1+k_2}s_{k_2}(E,g)\wedge s_{k_1}(E,g)\wedge [Z].
\]
Hence at $\zeta=0$
\begin{multline*}
s_1(E,h,\theta)\wedge s_2(E,h,\theta) = 
-\pi_*\big ( 
[dd^c\varphi_2]^{2}_{\theta_2}\wedge
[dd^c\varphi_1]^{3}_{\theta_1}
\big ) = \\
-\pi_*\big (\theta_2^{2}\wedge \theta_1^{2}\wedge [Z]\big )=
-s_{1}(E,g)\wedge s_{1}(E,g)\wedge [Z] = 0, 
\end{multline*}
and similarly 
\[
s_2(E,h,\theta)\wedge s_1(E,h,\theta) = 
-s_{2}(E,g)\wedge s_{0}(E,g)\wedge [Z] = -(dd^c|\zeta|^2)^2\wedge
[Z]\neq 0. 
\]
Thus $s_1(E,h,\theta)\wedge s_2(E,h,\theta) \neq  s_2(E,h,\theta)\wedge
s_1(E,h,\theta)$ in this case. 
\end{ex}

\begin{bibdiv}
\begin{biblist}

\bib{A}{article}{
    AUTHOR = {Andersson, Mats},
     TITLE = {Residues of holomorphic sections and {L}elong currents},
   JOURNAL = {Ark. Mat.},
    VOLUME = {43},
      YEAR = {2005},
    NUMBER = {2},
     PAGES = {201--219},
      ISSN = {0004-2080},
       URL = {http://dx.doi.org/10.1007/BF02384777},
}

\bib{ABW}{article}{
   author={Andersson, Mats},
   author={B\l ocki, Zbigniew},
   author={Wulcan, Elizabeth},
   title={On a Monge-Amp\`ere operator for plurisubharmonic functions with analytic singularities},
   journal={Indiana Univ. Math. J.},
   volume={68},
   date={2019},
   number={4},
   pages={1217--1231},
}

\bib{ASWY}{article}{
    AUTHOR = {{A}ndersson, Mats},
author={{S}amuelsson {}Kalm, H\aa kan},
author={{W}ulcan, Elizabeth},
author={{Y}ger, Alain},
     TITLE = {Segre numbers, a generalized {K}ing formula, and local
              intersections},
   JOURNAL = {J. Reine Angew. Math.},
    VOLUME = {728},
      YEAR = {2017},
     PAGES = {105--136},
       URL = {https://doi.org/10.1515/crelle-2014-0109},
}

\bib{AW}{article}{
 AUTHOR = {Andersson, Mats},
 AUTHOR = {Wulcan, Elizabeth},
     TITLE = {Green functions, {S}egre numbers, and {K}ing's formula},
   JOURNAL = {Ann. Inst. Fourier (Grenoble)},
    VOLUME = {64},
      YEAR = {2014},
    NUMBER = {6},
     PAGES = {2639--2657},
}

\bib{BP}{article}{
   author={Berndtsson, Bo},
   author={P\u aun, Mihai},
   title={Bergman kernels and the pseudoeffectivity of relative canonical
   bundles},
   journal={Duke Math. J.},
   volume={145},
   date={2008},
   number={2},
   pages={341--378},
}

\bib{D}{article}{
    AUTHOR = {{De}mailly, Jean-Pierre},
     TITLE = {Monge-{A}mp\`ere operators, {L}elong numbers and intersection
              theory},
 BOOKTITLE = {Complex analysis and geometry},
    SERIES = {Univ. Ser. Math.},
     PAGES = {115--193},
 PUBLISHER = {Plenum, New York},
      YEAR = {1993},
}

\bib{Dem99}{article}{
    AUTHOR = {{De}mailly, Jean-Pierre},
     TITLE = {Pseudoconvex-concave duality and regularization of currents},
 BOOKTITLE = {Several complex variables ({B}erkeley, {CA}, 1995--1996)},
    SERIES = {Math. Sci. Res. Inst. Publ.},
    VOLUME = {37},
     PAGES = {233--271},
 PUBLISHER = {Cambridge Univ. Press, Cambridge},
      YEAR = {1999},
}

\bib{Dem}{article}{
   author={{De}mailly, Jean-Pierre},
   title={Complex Analytic and Differential Geometry},
   status={Monograph},
   eprint={http://www-fourier.ujf-grenoble.fr/~demailly/manuscripts/agbook.pdf},
}

\bib{Dem2}{article}{
   author={{De}mailly, Jean-Pierre},
     TITLE = {Singular {H}ermitian metrics on positive line bundles},
 BOOKTITLE = {Complex algebraic varieties ({B}ayreuth, 1990)},
    SERIES = {Lecture Notes in Math.},
    VOLUME = {1507},
     PAGES = {87--104},
 PUBLISHER = {Springer, Berlin},
      YEAR = {1992},
       URL = {https://doi.org/10.1007/BFb0094512},
}

\bib{Div}{article}{
   author={{Di}verio, Simone},
   title={Segre forms and Kobayashi--L\"ubke inequality},
   journal={Math. Z.},
   volume={283},
   date={2016},
   number={3-4},
   pages={1033--1047},
}

\bib{F}{book}{
   author={Fulton, William},
   title={Intersection theory},
   series={Ergebnisse der Mathematik und ihrer Grenzgebiete. 3. Folge. A
   Series of Modern Surveys in Mathematics },
   volume={2},
   edition={2},
   publisher={Springer-Verlag, Berlin},
   date={1998},
   pages={xiv+470},
   isbn={3-540-62046-X},
   isbn={0-387-98549-2},
}

\bib{G}{article}{
   author={Guler, Dincer},
   title={On Segre forms of positive vector bundles},
   journal={Canad. Math. Bull.},
   volume={55},
   date={2012},
   number={1},
   pages={108--113},
}

\bib{HPS}{article}{
   author={Hacon, Christopher},
   author={Popa, Mihnea},
   author={Schnell, Christian},
   title={Algebraic fiber spaces over abelian varieties: around a recent
   theorem by Cao and P\u{a}un},
   conference={
      title={Local and global methods in algebraic geometry},
   },
   book={
      series={Contemp. Math.},
      volume={712},
      publisher={Amer. Math. Soc., Providence, RI},
   },
   date={2018},
   pages={143--195},
}

\bib{Hos}{article}{
author={Hosono, Genki},
title={Approximations and examples of singular Hermitian metrics on vector bundles},
   journal={Ark. Mat.},
volume={55},
date={2017},
number={1},
pages={131--153},
}

\bib{LRRS}{article}{
author={{L{\"a}rk{\"a}ng}, Richard},
author={{Raufi}, Hossein},
author={{Ruppenthal}, Jean},
author={{Sera}, Martin},
title={Chern forms of singular metrics on vector bundles},
   journal={Adv. Math.},
volume={326},
date={2018},
pages={465--489},
}

\bib{Laz2}{book}{
   author={Lazarsfeld, Robert},
   title={Positivity in algebraic geometry. II},
   series={Ergebnisse der Mathematik und ihrer Grenzgebiete. 3. Folge. A
   Series of Modern Surveys in Mathematics },
   volume={49},
   publisher={Springer-Verlag, Berlin},
   date={2004},
   pages={xviii+385},
}

\bib{M}{article}{
   author={Mourougane, Christophe},
   title={Computations of Bott-Chern classes on ${\P}(E)$},
   journal={Duke Math. J.},
   volume={124},
   date={2004},
   number={2},
   pages={389--420},
}

\bib{R}{article}{
    AUTHOR = {Raufi, Hossein},
     TITLE = {Singular hermitian metrics on holomorphic vector bundles},
   JOURNAL = {Ark. Mat.},
    VOLUME = {53},
      YEAR = {2015},
    NUMBER = {2},
     PAGES = {359--382},
      ISSN = {0004-2080},
       URL = {https://doi.org/10.1007/s11512-015-0212-4},
}

\end{biblist}
\end{bibdiv}

\end{document}